 \numberwithin{equation}{section}
\theoremstyle{plain}
\newtheorem{thm}{Theorem}[section]
\newtheorem{cor}[thm]{Corollary}
\newtheorem{lem}[thm]{Lemma}
\newtheorem{prop}[thm]{Proposition}
\newtheorem*{fcl}{Finite Chain Lemma}
\theoremstyle{definition}
\theoremstyle{remark}
\newtheorem{rem}[thm]{Remark}
\newcommand{\N}{\mathbb{N}}
\newcommand{\R}{\mathbb{R}}
\newcommand{\bp}{\begin{proof}[\ensuremath{\mathbf{Proof}}]}
\newcommand{\bs}{\begin{proof}[\ensuremath{\mathbf{Solution}}]}
\newcommand{\ep}{\end{proof}}
\newcommand{\be}{\begin{equation}}
\newcommand{\ee}{\end{equation}}
\begin{document}

\title{Extremal functions for Morrey's inequality}

\author{Ryan Hynd\footnote{Department of Mathematics, University of Pennsylvania.  Partially supported by NSF grant DMS-1554130.}\;  and Francis Seuffert\footnote{Department of Mathematics, University of Pennsylvania.}}

\maketitle 

\begin{abstract}
We give a qualitative description of extremals for Morrey's inequality. Our theory is based on exploiting the invariances of this inequality, studying the equation satisfied by extremals and the observation that extremals are optimal for a related convex minimization problem. 
\end{abstract}


\section{Introduction}
The Sobolev inequality is arguably the most important functional inequality in the theory of Sobolev spaces. It asserts that for each natural number $n$ and 
$$
1< p<n,
$$
there is a constant $C$ such that  
\be\label{GNS}
\left(\int_{\R^n}|u|^{p^*}dx\right)^{1/p^*}\le 
C\left(\int_{\R^n}|Du|^{p}dx\right)^{1/p}
\ee
for each continuously differentiable $u:\R^n\rightarrow \R$ with compact support.   Here
\be\label{ThepStar}
p^*=\frac{np}{n-p}.
\ee
This estimate is ubiquitous in PDE theory and also played a crucial role in the solution of Yamabe's problem in Riemannian geometry \cite{MR888880}. For Yamabe's problem, 
it was essential to know the {\it sharp} or smallest constant $C=C^*$ for which \eqref{GNS} holds and to know the {\it extremals} or functions for which equality is attained in \eqref{GNS}.

\par Using symmetrization methods, Talenti derived the sharp constant 
$$
C^*=\frac{1}{\sqrt{\pi}\;n^{1/p}}\left(\frac{p-1}{n-p}\right)^{1-1/p}\left(\frac{\Gamma\left(1+\frac{n}{2}\right)\Gamma(n)}{\Gamma\left(\frac{n}{p}\right)\Gamma\left(1+n-\frac{n}{p}\right)}\right)^{1/n}
$$
and found that nonnegative extremals are necessarily of the form
$$
u(x)=\left(a+b|x-z|^{\frac{p}{p-1}}\right)^{1-\frac{n}{p}}
$$
for some $a,b>0$ and $z\in \R^n$ \cite{MR0463908}.  Aubin made similar insights around the same time in his influential work on Yamabe's problem \cite{MR0431287, MR0448404}. In particular, both Talenti and Aubin noted that extremals for the Sobolev inequality are radially symmetric.

\par The counterpart for Sobolev's inequality when 
\be\label{pBiggerthann}
n<p<\infty
\ee
is known as Morrey's inequality. It asserts that there is a constant $C$ for which the inequality
\be\label{MorreyIneq}
\sup_{x\ne y}\left\{\frac{|u(x)-u(y)|}{|x-y|^{1-n/p}}\right\}\le C \left(\int_{\R^n}|Du|^pdz\right)^{1/p}
\ee
holds for every $u:\R^n\rightarrow \R$ that is continuously differentiable.  It is of great interest to deduce the sharp constant $C=C_*$ and to explicitly express the corresponding extremal functions.  These problems are easy to 
solve when $n=1$ but are unsolved for $n\ge 2$.  In particular, symmetrization methods do not seem to yield insightful information for 
Morrey's inequality the way they do for other well known functional inequalities as detailed in \cite{ MR2178968, MR2395175,MR2551138, MR2402947, MR1322313}.

\par In this paper, we show extremals for Morrey's inequality exist and establish several qualitative properties of these functions.  Specifically, we study 
their symmetry properties and show that they are unique up to the natural invariances associated with Morrey's inequality.  We also verify Morrey extremals 
are bounded, smooth on $\R^n$ minus two points, and their level sets bound convex regions in $\R^n$. Our approach is based on the analysis of a certain PDE that extremals satisfy.  We will informally summarize these properties below and then verify precise versions of these statements in the discussion to follow.  Hereafter, the term ``extremal" will only apply to Morrey's inequality and we will always assume \eqref{pBiggerthann}.
\\
\par {\bf  Maximized H\"older ratio}.  For each extremal function $u$, there are distinct points $x_0,y_0\in \R^n$ that maximize its $1-n/p$ H\"older ratio. That is,
\be\label{MaxHoldeRatio}
\sup_{x\ne y}\left\{\frac{|u(x)-u(y)|}{|x-y|^{1-n/p}}\right\}=
\frac{|u(x_0)-u(y_0)|}{|x_0-y_0|^{1-n/p}}.
\ee
This property also holds more generally for functions  $v:\R^n\rightarrow\R$ which satisfy 
\be\label{DVLp}
\int_{\R^n}|Dv|^pdx<\infty.
\ee
Curiously, a stability estimate for Morrey's inequality follows from this insight. In particular, for any $v$ satisfying \eqref{DVLp}, there is an extremal $u$ such that
\be
\left(\frac{C_*}{2}\right)^p\int_{\R^n}|Du-Dv|^pdx+\sup_{x\ne y}\left\{\frac{|v(x)-v(y)|}{|x-y|^{1-n/p}}\right\}^p\le C_*^p\int_{\R^n}|Dv|^pdx
\ee
when $2<p<\infty$; an analogous inequality holds for $1<p\le 2$. 
\\
\par {\bf PDE}. Suppose $u$ is an extremal which satisfies \eqref{MaxHoldeRatio}. Then the PDE
\be\label{strongPDE}
-\Delta_pu=\frac{|u(x_0)-u(y_0)|^{p-2}(u(x_0)-u(y_0))}{C_*^p|x_0-y_0|^{p-n}}(\delta_{x_0}-\delta_{y_0})
\ee
holds in $\R^n$.  Here $\Delta_p$ is the $p$-Laplace operator 
$$
\Delta_pu=\text{div}(|Du|^{p-2}Du),
$$
and $C_*$ is the sharp constant for Morrey's inequality.  Moreover, the converse is true. If $v$ satisfies
\be
-\Delta_pv=c(\delta_{x_0}-\delta_{y_0})
\ee
in $\R^n$ for some $c\in \R$, then $v$ is necessarily an extremal and the H\"older ratio of $v$ is maximized at $x_0$ and $y_0$.
\\
\par {\bf Uniqueness}. For each distinct pair of points $x_0,y_0\in \R^n$ and distinct pair of values $\alpha,\beta\in \R$, there is a unique extremal $u$ which satisfies \eqref{MaxHoldeRatio} and
$$
u(x_0)=\alpha\quad\text{and}\quad u(y_0)=\beta.
$$
Consequently, the dimension of the space of extremals is $2n+2$.  Furthermore, 
$$
\int_{\R^n}|Du|^pdx\le \int_{\R^n}|Dv|^pdx
$$
for every $v:\R^n\rightarrow\R$ which satisfies the pointwise constraints 
$$
v(x_0)=\alpha\quad\text{and}\quad v(y_0)=\beta.
$$
This observation allows us to numerically approximate extremals with coordinate gradient descent (as discussed in Appendix \ref{AppNum}).  
See Figure \ref{ExtremalFig}. 
\begin{figure}[h]
\centering
 \includegraphics[width=1\columnwidth]{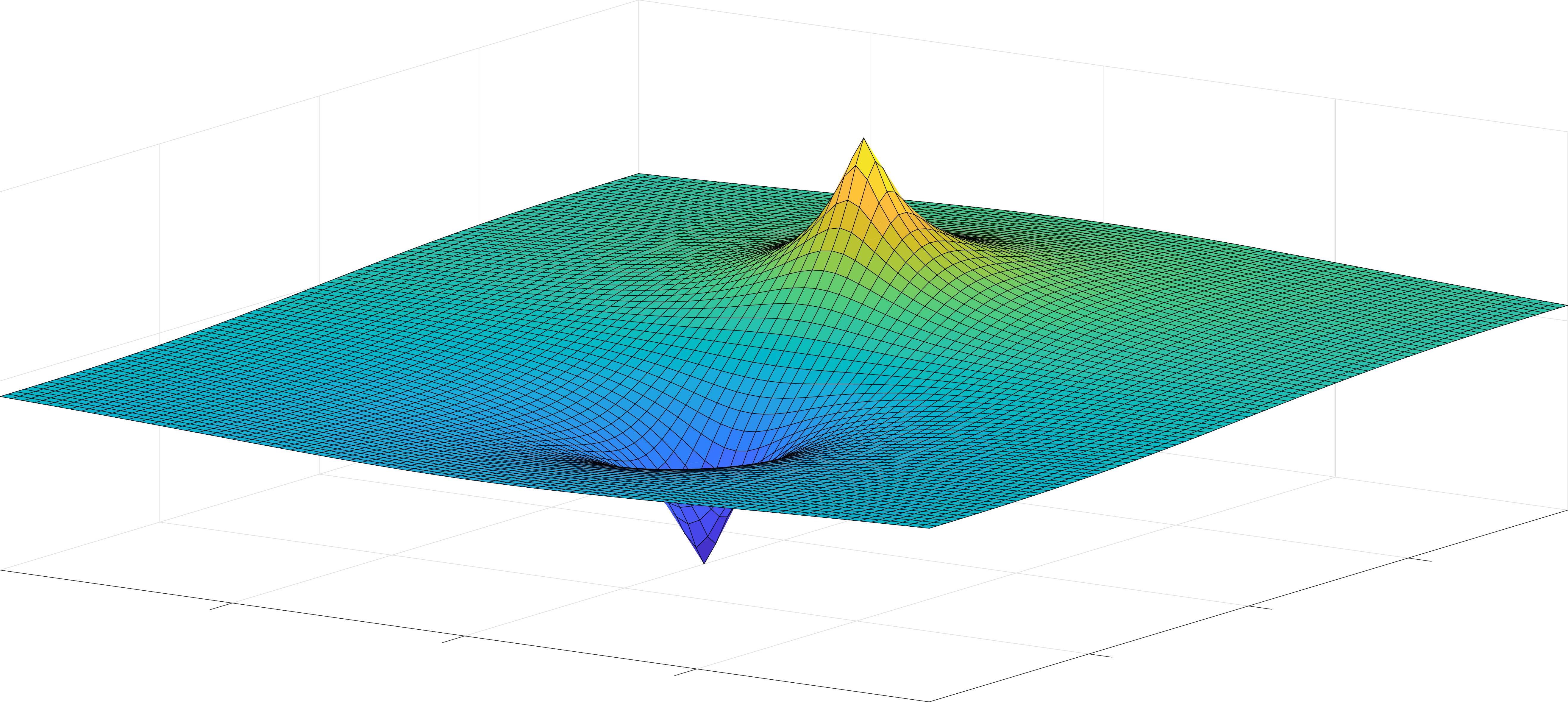}
 \caption{The graph of an extremal for Morrey's inequality in two spatial dimensions.}\label{ExtremalFig}
\end{figure}
\\
\par {\bf Cylindrical symmetry}. Assume $u$ is an extremal which satisfies \eqref{MaxHoldeRatio}. Then 
$$
u(x)=u(O(x-x_0)+x_0)
$$
for any orthogonal transformation $O$ of $\R^n$ which satisfies 
$$
O(y_0-x_0)=y_0-x_0.
$$
That is, $u$ is invariant under rigid transformations of $\R^n$ that fix the line passing through $x_0$ and $y_0$.  In particular, extremals are cylindrical about this line and are not radially symmetric. 
\\
\par {\bf Reflectional antisymmetry}.  If $u$ is an extremal whose H\"older ratio is maximized at distinct $x_0,y_0\in \R^n$, then
$$
u\left(x -2\frac{\left((x_0-y_0)\cdot (x-\frac{1}{2}(x_0+y_0)\right)}{|x_0-y_0|^2}(x_0-y_0)\right)-\frac{u(x_0)+u(y_0)}{2}
=-\left(u(x)-\frac{u(x_0)+u(y_0)}{2}\right).
$$
In other words, the function 
$$
u-\frac{1}{2}(u(x_0)+u(y_0))
$$
is antisymmetric with respect to reflection about the hyperplane with normal $x_0-y_0$ that contains $\frac{1}{2}(x_0+y_0)$.
\\
\par {\bf Boundedness}. Suppose $u$ is an extremal which satisfies \eqref{MaxHoldeRatio}. Then $u$ is bounded with 
$$
\inf_{x\in\R^n}u(x)=\min\{u(x_0),u(y_0)\}
$$
and
$$
\sup_{x\in\R^n}u(x)=\max\{u(x_0),u(y_0)\}.
$$
In particular, $u$ assumes its minimum and maximum values at points which maximize its H\"older seminorm. Furthermore, if $n\ge 2$ and $u$ is nonconstant, $u$ attains its maximum and minimum values uniquely at $x_0$ and $y_0$. 
\\
\par {\bf Quasiconcavity}. Suppose that $u$ is an extremal whose H\"older ratio is maximized at distinct $x_0,y_0\in \R^n$.  Also consider the two half-spaces 
$$
\Pi_\pm=\left\{x\in \R^n: \pm \left(x-\frac{1}{2}(x_0+y_0)\right)\cdot(x_0-y_0)>0\right\}
$$ 
divided by hyperplane with normal $x_0-y_0$ that contains $\frac{1}{2}(x_0+y_0)$.
If $u(x_0)>u(y_0)$, then
$$
\mbox{ $u|_{\Pi_+}$ is quasiconcave and  $u|_{\Pi_-}$ is quasiconvex};
$$
alternatively if $u(x_0)<u(y_0)$, then
$$
\mbox{ $u|_{\Pi_+}$ is quasiconvex and  $u|_{\Pi_-}$ is quasiconcave}.
$$
In either case, each level set of $u$ is the boundary of a convex subset of $\R^n$.  Refer to Figure \ref{ContourExt}. 
\\
\par {\bf Nonvanishing gradient}. Suppose $n\ge 2$. If $u$ is a nonconstant extremal whose H\"older ratio is maximized at distinct $x_0,y_0\in \R^n$, then  
$$
|Du|>0\quad \text{in}\quad \R^n\setminus\{x_0,y_0\}.
$$
This positivity combined with equation \eqref{strongPDE} will imply that $u$ has continuous derivatives of all orders and is in fact real analytic on $\R^n\setminus\{x_0,y_0\}$.  It also implies there are no points where $u$ has a local minimum or maximum except for $x_0$ and $y_0$. In addition, this property will allow us to show that $C_*$ cannot be found using the typical method employed to establish Morrey's inequality \eqref{MorreyIneq} for some constant $C$.
\\
\par We will begin by indicating the natural invariances associated with Morrey's inequality and by exploiting these invariances to establish the existence of extremal functions. Then we will proceed to verify each of the above assertions roughly in the order that they are presented with the exception of property \eqref{MaxHoldeRatio}; we will save this technical assertion for the last section of this paper.  Finally, we would like to thank Eric Carlen, Yat Tin Chow, Elliott Lieb, Bob Kohn, 
Erik Lindgren, Stan Osher, and Tak Kwong Wong for their advice and insightful discussions related to this work.


\section{Preliminaries}
\par In what follows, it will be convenient for us to adopt the notation 
\be
[u]_{C^{1-n/p}(\R^n)}:=\sup_{x\ne y}\left\{\frac{|u(x)-u(y)|}{|x-y|^{1-n/p}}\right\}
\ee
and 
\be
\|Du\|_{L^p(\R^n)}:=\left(\int_{\R^n}|Du|^pdx\right)^{1/p}.
\ee
This allows us to restate Morrey's inequality \eqref{MorreyIneq} more concisely as 
$$
[u]_{C^{1-n/p}(\R^n)}\le C\|Du\|_{L^p(\R^n)}.
$$
For the remainder of this paper, we will also reserve $C$ for any constant such that Morrey's inequality holds and $C_*$ for the sharp constant. 

\par  A natural class of functions for us to consider is the homogeneous Sobolev space
\be
{\cal  D}^{1,p}(\R^n):=\left\{u\in L^1_{\text{loc}}(\R^n): u_{x_i}\in L^p(\R^n)\;\text{for $i=1,\dots, n$}\right\}.
\ee 
In particular, $u\in {\cal  D}^{1,p}(\R^n)$ means that $u$ is weakly differentiable and all of its weak first order partial derivatives are $L^p(\R^n)$ functions.  What's more is that we can interpret Morrey's inequality to hold on ${\cal  D}^{1,p}(\R^n)$ once we recall that each $u\in {\cal  D}^{1,p}(\R^n)$ has a H\"older continuous representative $u^*:\R^n\rightarrow \R$. Namely, $u(x)=u^*(x)$ for Lebesgue almost every $x\in \R^n$ and
\be\label{RealMorreyIneq}
[u^*]_{C^{1-n/p}(\R^n)}\le C\|Du\|_{L^p(\R^n)}.
\ee
As such an argument is now routine within the theory of Sobolev spaces, we omit the details. Moreover, we will always identify $u$ with $u^*$ going forward.

\subsection{Existence of a nonconstant extremal}
Note that the seminorms $u\mapsto [u]_{C^{1-n/p}(\R^n)}$ and $u\mapsto  \|Du\|_{L^p(\R^n)}$ are invariant under the following transformations. 

\begin{itemize}

\item Multiplication by $-1$: $u(x)\mapsto -u(x)$

\item Addition by a constant $c\in \R$: $u(x)\mapsto u(x)+c$

\item Translation by a vector $a\in \R^n$: $u(x)\mapsto u(x+a)$

\item Application of an orthogonal transformation $O$ of $\R^n$: $u(x)\mapsto u(Ox)$

\item Scaling by $\lambda>0$: $u(x)\mapsto \lambda^{n/p-1}u(\lambda x)$

\end{itemize}
We will exploit these invariances to verify that extremals for Morrey's inequality exist. 
\begin{lem}\label{ExistenceLem}
There exists a nonconstant $u\in {\cal  D}^{1,p}(\R^n)$ such that 
\be\label{RealMorreyIneq}
[u]_{C^{1-n/p}(\R^n)}= C_*\|Du\|_{L^p(\R^n)}.
\ee
In particular, $C_*>0$.
\end{lem}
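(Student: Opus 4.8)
The plan is to follow the direct method in the calculus of variations, using the invariances listed above to normalize a maximizing sequence and recover compactness. First I would set $C_* = \sup\{[u]_{C^{1-n/p}(\R^n)} : u \in \mathcal{D}^{1,p}(\R^n),\ \|Du\|_{L^p(\R^n)} = 1\}$, which is finite since Morrey's inequality \eqref{RealMorreyIneq} holds for some finite $C$; positivity of $C_*$ is clear by testing against any single fixed nonconstant $v \in C^\infty_c(\R^n)$. Take a sequence $u_k \in \mathcal{D}^{1,p}(\R^n)$ with $\|Du_k\|_{L^p(\R^n)} = 1$ and $[u_k]_{C^{1-n/p}(\R^n)} \to C_*$. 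For each $k$ choose near-optimal points $x_k \ne y_k$ with $|u_k(x_k) - u_k(y_k)| / |x_k - y_k|^{1-n/p} \to C_*$; by adding a constant (invariance) we may take $u_k(x_k) = -u_k(y_k)$, and by applying a translation, a rotation, and the scaling $u \mapsto \lambda^{n/p-1} u(\lambda \cdot)$ we may arrange $x_k = e$ and $y_k = -e$ for a fixed unit vector $e$ (or $x_k = \tfrac12 e$, $y_k = -\tfrac12 e$ — any fixed pair will do), since the scaling exactly compensates any change in $|x_k - y_k|$ while preserving both seminorms.

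Next I would extract a weak limit. The sequence $\{Du_k\}$ is bounded in $L^p(\R^n;\R^n)$ and $\{u_k\}$ is equibounded and equi-Hölder-continuous on compact sets (uniform $1-n/p$ Hölder bound from $C_*$, plus the normalization $u_k(e) = -u_k(-e)$ pins down the additive constant and, together with the Hölder bound, gives a uniform sup bound on balls). By Arzelà–Ascoli and a diagonal argument, along a subsequence $u_k \to u$ locally uniformly and $Du_k \rightharpoonup Du$ weakly in $L^p$, with $u \in \mathcal{D}^{1,p}(\R^n)$. Weak lower semicontinuity of the $L^p$ norm gives $\|Du\|_{L^p(\R^n)} \le \liminf \|Du_k\|_{L^p(\R^n)} = 1$, while local uniform convergence gives $|u(e) - u(-e)| = \lim |u_k(e) - u_k(-e)| = C_* \cdot |2e|^{1-n/p}$, hence $[u]_{C^{1-n/p}(\R^n)} \ge C_*$. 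Combining with Morrey's inequality applied to $u$, namely $[u]_{C^{1-n/p}(\R^n)} \le C_* \|Du\|_{L^p(\R^n)} \le C_*$, we conclude $\|Du\|_{L^p(\R^n)} = 1$ and $[u]_{C^{1-n/p}(\R^n)} = C_*$, so $u$ is an extremal; rescaling trivially yields the stated form \eqref{RealMorreyIneq} with general norm. Since $C_* > 0$ forces $[u]_{C^{1-n/p}(\R^n)} > 0$, the extremal $u$ is nonconstant.

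The main obstacle is the absence of global compactness: $\mathcal{D}^{1,p}(\R^n)$ modulo constants does not embed compactly into any space on all of $\R^n$, so I must rule out the two standard failure modes — the maximizing sequence escaping to infinity (handled by translating the near-optimal points to a fixed location) and the Hölder ratio being ``lost at infinity'' in the limit. The latter is the delicate point: I need to ensure that the near-optimal pair $(x_k, y_k)$, once normalized, genuinely witnesses the limiting seminorm, i.e.\ that the supremum defining $[u_k]_{C^{1-n/p}}$ is essentially attained at a bounded pair at bounded distance. The scaling and translation invariances make $|x_k - y_k| = |2e|$ and the pair centered, and then local uniform convergence transfers the (near-)equality to $u$ at exactly those two points — so the argument closes without needing the full strength of concentration-compactness, though one should check carefully that no degeneracy (e.g.\ $|x_k - y_k| \to 0$ or $\to \infty$) survives the normalization, which it cannot since after rescaling the distance is pinned.
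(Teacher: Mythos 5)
Your proposal is correct and follows essentially the same route as the paper's proof: normalize a extremizing sequence using the additive, translation, rotation, and scaling invariances so that the near-optimal pair is pinned at fixed points, then apply Arzel\`a--Ascoli together with weak $L^p$ compactness and lower semicontinuity of $\|D\cdot\|_{L^p(\R^n)}$. The only difference is cosmetic: you maximize $[u]_{C^{1-n/p}(\R^n)}$ subject to $\|Du\|_{L^p(\R^n)}=1$, while the paper minimizes $\|Du\|_{L^p(\R^n)}$ subject to $[u]_{C^{1-n/p}(\R^n)}=1$, which are equivalent formulations.
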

\begin{proof}
Let us define
\be
\Lambda:=\inf\left\{\|Du\|_{L^p(\R^n)} \; : u\in {\cal  D}^{1,p}(\R^n),\; [u]_{C^{1-n/p}(\R^n)}=1 \right\}
\ee
and also choose a minimizing sequence $(u_k)_{k\in \N}$ for which
\be
\Lambda=\lim_{k\rightarrow\infty}\|Du_k\|_{L^p(\R^n)}.
\ee
For each $k\in \N$, we can select $x_k,y_k\in \R^n$ with $x_k\neq y_k$ such that
\be
1=[u_k]_{C^{1-n/p}(\R^n)}<\frac{u_k(x_k)-u_k(y_k)}{|x_k-y_k|^{1-n/p}}+\frac{1}{k}.
\ee
We may also find an orthogonal transformation $O_k$ satisfying 
$$
O_ke_n=\frac{y_k-x_k}{|x_k-y_k|},
$$ 
where $e_n=(0,\dots, 0,1)$. 

\par In addition, we define
$$
v_k(z):=|x_k-y_k|^{n/p-1}\Big\{u_k\left(|x_k-y_k| O_kz+x_k\right)-u_k(x_k)\Big\}
$$
for $z\in \R^n$ and $k\in \N$.  By the invariances of the seminorms $u\mapsto [u]_{C^{1-n/p}(\R^n)}$ and $u\mapsto  \|Du\|_{L^p(\R^n)}$,  
$$
[v_k]_{C^{1-n/p}(\R^n)}=1 \quad \text{and}\quad \Lambda=\lim_{k\in \N}\|Dv_k\|_{L^p(\R^n)}.
$$
Moreover, 
$$
v_k(0)=0\quad \text{and}\quad  1-\frac{1}{k}< v_k(e_n)\le 1. 
$$
We can now employ a standard variant of the Arzel\`a-Ascoli theorem to obtain a subsequence $(v_{k_j})_{j\in \N}$ converging locally uniformly 
to a continuous function $v:\R^n\rightarrow \R$. 

\par Local uniform convergence immediately implies
$$
v(0)=0,\quad v(e_n)= 1, \quad  \text{and}\quad [v]_{C^{1-n/p}(\R^n)}\le 1. 
$$
As
$$
1=\frac{v(e_n)-v(0)}{|e_n-0|^{1-n/p}}\le [v]_{C^{1-n/p}(\R^n)},
$$
we actually have
$$
[v]_{C^{1-n/p}(\R^n)}= 1.
$$ 
Since $\|Dv_k\|_{L^p(\R^n)}$ is bounded, we may as well suppose that $(Dv_{k_j})_{j\in \N}$ converges weakly in $L^p(\R^n;\R^n)$.
Using the local uniform convergence of $v_{k_j}$, it is easy to verify that the weak limit of $D v_{k_j}$ in $L^p (\R^n;\R^n)$ is the weak derivative of $v$.  It then follows that $v\in {\cal  D}^{1,p}(\R^n)$ and
$$
\Lambda=\liminf_{j\in \N}\|Dv_{k_j}\|_{L^p(\R^n)}\ge \|Dv\|_{L^p(\R^n)}.
$$

\par Since $v$ is nonconstant, $\Lambda$ is positive. By the definition of $\Lambda$, $C=1/\Lambda$ is a constant for which Morrey's inequality holds. 
As
$$
1=[v]_{C^{1-n/p}(\R^n)}\le \frac{1}{\Lambda}\|Dv\|_{L^p(\R^n)}\le 1,
$$
$1/\Lambda=C_*>0$ and $v$ is a nonconstant extremal of Morrey's inequality. 
\end{proof}
\begin{cor}\label{ExistenceCor}
Assume $x_0,y_0\in \R^n$ and $\alpha,\beta\in \R$ are distinct. There is an extremal $u$ which satisfies $u(x_0)=\alpha$ and $u(y_0)=\beta$ and whose $1-n/p$ H\"older ratio is maximized at $x_0$ and $y_0$.
\end{cor}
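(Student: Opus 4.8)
The plan is to build $u$ from the nonconstant extremal $v$ furnished by Lemma \ref{ExistenceLem} by composing the invariances listed above. Recall that $v\in\mathcal{D}^{1,p}(\R^n)$ satisfies $v(0)=0$, $v(e_n)=1$, and $[v]_{C^{1-n/p}(\R^n)}=1$. Since
$$
\frac{v(e_n)-v(0)}{|e_n-0|^{1-n/p}}=1=[v]_{C^{1-n/p}(\R^n)},
$$
the H\"older ratio of $v$ is in fact maximized at the pair $\{0,e_n\}$. Every transformation in the list leaves both $[\,\cdot\,]_{C^{1-n/p}(\R^n)}$ and $\|D\,\cdot\,\|_{L^p(\R^n)}$ unchanged, hence maps an extremal to an extremal, and each acts transparently on a maximizing pair: an orthogonal map $x\mapsto Ox$ sends a maximizing pair $\{a,b\}$ to $\{O^{-1}a,O^{-1}b\}$, the scaling $u(x)\mapsto\lambda^{n/p-1}u(\lambda x)$ sends it to $\{\lambda^{-1}a,\lambda^{-1}b\}$, and the translation $u(x)\mapsto u(x+a_0)$ sends it to $\{a-a_0,b-a_0\}$. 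So one can steer $\{0,e_n\}$ onto any prescribed distinct pair $\{x_0,y_0\}$.

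Concretely, I would set $r:=|x_0-y_0|>0$, choose an orthogonal transformation $Q$ with $Q^{-1}e_n=(y_0-x_0)/r$ (possible because the right-hand side is a unit vector), and then define successively
$$
v_2(x):=v(Qx),\qquad v_3(x):=r^{1-n/p}\,v_2(x/r),\qquad v_4(x):=v_3(x-x_0).
$$
Each of these steps is one of the admissible invariances, so $v_4$ is again an extremal; chasing the pair $\{0,e_n\}$ through the three substitutions shows the H\"older ratio of $v_4$ is maximized at $\{x_0,y_0\}$, and a direct computation gives $v_4(x_0)=v(0)=0$ and $v_4(y_0)=r^{1-n/p}v(QQ^{-1}e_n)=r^{1-n/p}v(e_n)=r^{1-n/p}$. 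Writing $\gamma:=r^{1-n/p}>0$, I would finally put
$$
u(x):=\frac{\beta-\alpha}{\gamma}\,v_4(x)+\alpha .
$$
Since $\alpha\ne\beta$ the coefficient $(\beta-\alpha)/\gamma$ is a nonzero scalar, and multiplying an extremal by a nonzero scalar and then adding a constant again produces an extremal with the same maximizing pair, because the defining equality $[\,\cdot\,]_{C^{1-n/p}(\R^n)}=C_*\|D\,\cdot\,\|_{L^p(\R^n)}$ is homogeneous of degree one under scalar multiplication and insensitive to additive constants. One checks $u(x_0)=\alpha$ and $u(y_0)=\gamma\cdot(\beta-\alpha)/\gamma+\alpha=\beta$, which is exactly what is claimed.

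I do not expect a genuine obstacle here: all the analytic content sits in Lemma \ref{ExistenceLem}, and the corollary is a bookkeeping exercise in combining symmetries. The two points that warrant a little care are (i) recording that the H\"older ratio of the extremal $v$ from the lemma is actually \emph{attained} at $\{0,e_n\}$ --- which the computation displayed above makes explicit --- and (ii) noting that, although the displayed list only mentions multiplication by $-1$, the extremal property is in fact preserved under multiplication by any nonzero real number, which is what lets us hit the two prescribed values $\alpha$ and $\beta$.
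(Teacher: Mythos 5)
Your proposal is correct and is essentially the paper's own proof: unravelling your composition gives exactly $u(x)=(\beta-\alpha)\,v\!\left(Q(x-x_0)/|y_0-x_0|\right)+\alpha$, which is the formula used in the paper, with the same use of the extremal $v$ from Lemma \ref{ExistenceLem} and the same invariances. The only difference is presentational (you track the maximizing pair through each transformation separately, and you make explicit the harmless extension of the invariance list to multiplication by arbitrary nonzero scalars), which does not change the argument.
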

\begin{proof}
In the proof of the above lemma, we showed there is an extremal $v$ which satisfies $v(0)=0$ and $v(e_n)=1$ and whose $1-n/p$ H\"older ratio is maximized at $0$ and $e_n$. We now select an orthogonal transformation $O$ of $\R^n$ for which 
$$
O\left(\frac{y_0-x_0}{|y_0-x_0|}\right)=e_n
$$ 
and set
$$
u(x)=(\beta-\alpha)v\left(\frac{O\left(x-x_0\right)}{|y_0-x_0|}\right)+\alpha
$$
for $x\in \R^n$.  Clearly $u(x_0)=\alpha$ and $u(y_0)=\beta$, and by the scaling invariance of the H\"older seminorm
$$
[u]_{C^{1-n/p}(\R^n)}=\frac{|\beta-\alpha|}{|x_0-y_0|^{1-n/p}}=\frac{|u(x_0)-u(y_0)|}{|x_0-y_0|^{1-n/p}}.
$$
\end{proof}

\subsection{PDE for extremals} 
We have established the existence of an extremal whose
$1-n/p$ H\"older ratio attains its maximum at a pair of distinct points.  We will later show that every function in ${\cal  D}^{1,p}(\R^n)$ has this property. For now, we will assume this property and use it to derive a PDE satisfied by extremal functions. 

\begin{prop}\label{PDEprop}
Suppose $u\in {\cal  D}^{1,p}(\R^n)$ is an extremal whose $1-n/p$ H\"older ratio attains its maximum at two distinct points $x_0,y_0\in \R^n$. Then for each $\phi \in {\cal  D}^{1,p}(\R^n)$
\be\label{weakPDE}
C^p_*\int_{\R^n}|Du|^{p-2}Du\cdot D\phi dx=\frac{|u(x_0)-u(y_0)|^{p-2}(u(x_0)-u(y_0))}{|x_0-y_0|^{p-n}}(\phi(x_0)-\phi(y_0)).
\ee
\end{prop}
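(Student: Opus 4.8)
The plan is to realize $t=0$ as the global minimum of a differentiable one‑parameter function and to read off \eqref{weakPDE} from the vanishing of its derivative. First I would dispatch the trivial case: if $u$ is constant, both sides of \eqref{weakPDE} vanish, so I may assume $u$ is nonconstant, in which case $[u]_{C^{1-n/p}(\R^n)}>0$ and hence $u(x_0)\neq u(y_0)$. Now fix $\phi\in{\cal D}^{1,p}(\R^n)$. Since every element of ${\cal D}^{1,p}(\R^n)$ has a H\"older continuous representative, the pointwise values $u(x_0),u(y_0),\phi(x_0),\phi(y_0)$ are meaningful and $u+t\phi$ lies in ${\cal D}^{1,p}(\R^n)$ for every $t\in\R$. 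Set
$$
g(t):=\int_{\R^n}|Du+tD\phi|^p\,dx,\qquad
\ell(t):=\frac{\bigl|(u(x_0)-u(y_0))+t(\phi(x_0)-\phi(y_0))\bigr|^p}{|x_0-y_0|^{p-n}},
$$
and consider $\Phi(t):=C_*^p\,g(t)-\ell(t)$.

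Two observations drive the argument. First, $\Phi(t)\ge 0$ for all $t$: applying Morrey's inequality to $u+t\phi$ and bounding its H\"older seminorm below by the difference quotient at the pair $x_0,y_0$ gives
$$
\ell(t)^{1/p}=\frac{\bigl|(u+t\phi)(x_0)-(u+t\phi)(y_0)\bigr|}{|x_0-y_0|^{1-n/p}}\le[u+t\phi]_{C^{1-n/p}(\R^n)}\le C_*\,g(t)^{1/p}.
$$
Second, $\Phi(0)=0$: since $u$ is an extremal and its H\"older ratio attains its maximum at $x_0,y_0$, we have $\ell(0)=[u]_{C^{1-n/p}(\R^n)}^p=C_*^p\|Du\|_{L^p(\R^n)}^p=C_*^p g(0)$. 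Consequently $\Phi$ has a global minimum at $t=0$, so $\Phi'(0)=0$ once $\Phi$ is known to be differentiable there.

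It then remains to compute the two derivatives. Because $u(x_0)\neq u(y_0)$, the scalar function $t\mapsto|(u(x_0)-u(y_0))+t(\phi(x_0)-\phi(y_0))|^p$ is smooth near $t=0$, with
$$
\ell'(0)=\frac{p\,|u(x_0)-u(y_0)|^{p-2}(u(x_0)-u(y_0))}{|x_0-y_0|^{p-n}}\bigl(\phi(x_0)-\phi(y_0)\bigr).
$$
For $g$, the integrand $t\mapsto|Du(x)+tD\phi(x)|^p$ is $C^1$ in $t$ (as $p>1$) with derivative $p|Du+tD\phi|^{p-2}(Du+tD\phi)\cdot D\phi$, which for $|t|\le1$ is dominated by $p(|Du|+|D\phi|)^{p-1}|D\phi|\in L^1(\R^n)$, the integrability following from H\"older's inequality since $(|Du|+|D\phi|)^{p-1}\in L^{p/(p-1)}(\R^n)$ and $|D\phi|\in L^p(\R^n)$. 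Differentiating under the integral sign yields $g'(0)=p\int_{\R^n}|Du|^{p-2}Du\cdot D\phi\,dx$. Then $0=\Phi'(0)=C_*^p g'(0)-\ell'(0)$, and cancelling the common factor $p$ produces exactly \eqref{weakPDE}. The only step with genuine analytic content is this differentiation under the integral sign; the rest is bookkeeping with the sharp constant $C_*$ and the attained maximum, so I expect no real obstacle beyond carefully tracking continuous representatives.
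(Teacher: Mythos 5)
Your proposal is correct and takes essentially the same approach as the paper: both are first-variation arguments comparing $u+t\phi$ via Morrey's inequality and using that equality with attainment at $(x_0,y_0)$ holds at $t=0$. The only differences are in the bookkeeping — you apply Fermat's rule at the interior minimum of $\Phi$ and justify $g'(0)$ by the mean value theorem plus dominated convergence, whereas the paper keeps $t>0$, bounds the H\"older term from below by convexity of $w\mapsto |w|^p$, cites a difference-quotient estimate for the gradient term, and then replaces $\phi$ by $-\phi$ to upgrade the resulting inequality to equality.
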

\begin{proof}
By assumption,
\be\label{PDEderiv1}
\frac{|u(x_0)-u(y_0)|^p}{|x_0-y_0|^{p-n}}=C_*^p\int_{\R^n}|Du|^pdx;
\ee
and in view of Morrey's inequality, 
\be\label{PDEderiv2}
\frac{|u(x_0)-u(y_0)+t(\phi(x_0)-\phi(y_0))|^p}{|x_0-y_0|^{p-n}}\le C_*^p\int_{\R^n}|Du+tD\phi|^pdx
\ee
for each $\phi \in {\cal  D}^{1,p}(\R^n)$ and $t>0$. Moreover, the convexity of the function $\R\ni w\mapsto |w|^p$ can be used to
derive
\begin{align}\label{PDEderiv3}
\frac{|u(x_0)-u(y_0)+t(\phi(x_0)-\phi(y_0))|^p}{|x_0-y_0|^{p-n}}&\ge \frac{|u(x_0)-u(y_0)|^p}{|x_0-y_0|^{p-n}} +\\
& \quad t p\frac{|u(x_0)-u(y_0)|^{p-2}(u(x_0)-u(y_0))}{|x_0-y_0|^{p-n}}(\phi(x_0)-\phi(y_0)).
\end{align}
As a result, we can subtract \eqref{PDEderiv1} from \eqref{PDEderiv2} and use \eqref{PDEderiv3} to obtain
\be\label{PDEderiv4}
 C_*^p\displaystyle\int_{\R^n}\left(\frac{|Du+tD\phi|^p-|Du|^p}{pt}\right)dx\ge \frac{|u(x_0)-u(y_0)|^{p-2}(u(x_0)-u(y_0))}{|x_0-y_0|^{p-n}}(\phi(x_0)-\phi(y_0)).
 \ee
 
 \par It is also possible to find a constant $c_p$ such that \be
0\le \frac{|Du+tD\phi|^p-|Du|^p}{pt}- |Du|^{p-2}Du\cdot D\phi
\le c_p
\begin{cases}
t^{p-1} |D\phi|^{p}, \quad &1<p<2\\
t |D\phi|^2(|Du|+|D\phi|)^{p-2}, \quad &2\le p<\infty
\end{cases}
\ee
for $t\in (0,1]$; see for example Lemma 10.2.1 in \cite{AGS}. With this estimate, we can pass to the limit as $t\rightarrow 0^+$ in \eqref{PDEderiv4} to conclude
\be\label{AlmostweakPDE} 
C^p_*\int_{\R^n}|Du|^{p-2}Du\cdot D\phi dx\ge \frac{|u(x_0)-u(y_0)|^{p-2}(u(x_0)-u(y_0))}{|x_0-y_0|^{p-n}}(\phi(x_0)-\phi(y_0)).
\ee
We arrive at \eqref{weakPDE} once we replace $\phi$ with $-\phi$ in \eqref{AlmostweakPDE}. 
\end{proof}

\par If $u\in {\cal  D}^{1,p}(\R^n)$ satisfies \eqref{weakPDE} for each $\phi\in {\cal  D}^{1,p}(\R^n)$, we can choose $\phi=u$ to find that $u$ is a 
necessarily an extremal function for Morrey's inequality and the $1-n/p$ H\"older seminorm of $u$ is attained at $(x_0,y_0)$.  Also note that \eqref{weakPDE} implies that $u$ is a weak solution of the PDE \eqref{strongPDE}
\be
-\Delta_pu=\frac{|u(x_0)-u(y_0)|^{p-2}(u(x_0)-u(y_0))}{C_*^p|x_0-y_0|^{p-n}}(\delta_{x_0}-\delta_{y_0})
\ee
in $\R^n$.  In particular, $u$ is $p$-harmonic
$$
-\Delta_pu=0
$$
in $\R^n\setminus\{x_0,y_0\}$. As a result, $Du$ is a locally H\"older continuous mapping from $\R^n\setminus\{x_0,y_0\}$ into $\R^n$ \cite{Evans, Lew, Ural}.

\par We will use the weak form of \eqref{strongPDE} to deduce various properties of extremals below.  Our first observation is that there are at least 
two other ways to identify that a function $u\in {\cal  D}^{1,p}(\R^n)$ is an extremal. The first is as a solution of a PDE of the type 
\eqref{strongPDE}; the second is as a minimizer of a related convex optimization problem. 
\begin{thm}\label{equivalentChar}
Suppose $x_0,y_0\in \R^n$ are distinct and $u\in {\cal  D}^{1,p}(\R^n)$.  Then the following are equivalent. 

\begin{enumerate}[(i)]

\item $u$ is an extremal with 
$$
[u]_{C^{1-n/p}(\R^n)}=\frac{|u(x_0)-u(y_0)|}{|x_0-y_0|^{1-n/p}}.
$$

\item There is a $c\in \R$ for which $u$ satisfies 
\be\label{TwoDeltaPDE}
-\Delta_pu=c(\delta_{x_0}-\delta_{y_0})
\ee
in $\R^n$.

\item For each $v\in {\cal  D}^{1,p}(\R^n)$ with $v(x_0)=u(x_0)$ and $v(y_0)=u(y_0)$,
\be\label{vLowerEniii}
\int_{\R^n}|Du|^pdx\le \int_{\R^n}|Dv|^pdx.
\ee
\end{enumerate}

\end{thm}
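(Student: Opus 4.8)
The plan is to establish the cycle of implications $(i)\Rightarrow(ii)\Rightarrow(iii)\Rightarrow(i)$, which yields the full equivalence. The implication $(i)\Rightarrow(ii)$ requires essentially nothing new: if $u$ is an extremal whose $1-n/p$ H\"older ratio is attained at the distinct points $x_0,y_0$, then Proposition \ref{PDEprop} applies and gives \eqref{weakPDE}; dividing that identity by $C_*^p$ shows that $u$ satisfies the weak form of \eqref{TwoDeltaPDE} with
$$
c=\frac{|u(x_0)-u(y_0)|^{p-2}(u(x_0)-u(y_0))}{C_*^p|x_0-y_0|^{p-n}}.
$$

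For $(ii)\Rightarrow(iii)$, suppose \eqref{TwoDeltaPDE} holds for some $c\in\R$ and let $v\in\mathcal{D}^{1,p}(\R^n)$ satisfy $v(x_0)=u(x_0)$ and $v(y_0)=u(y_0)$. Set $\phi:=v-u\in\mathcal{D}^{1,p}(\R^n)$, so that $\phi(x_0)=\phi(y_0)=0$. Since $w\mapsto|w|^p$ is convex and differentiable on $\R^n$ with gradient $p|w|^{p-2}w$, we have the pointwise inequality $|Dv|^p=|Du+D\phi|^p\ge|Du|^p+p|Du|^{p-2}Du\cdot D\phi$. The function $|Du|^{p-2}Du\cdot D\phi$ is integrable, since $|Du|^{p-1}\in L^{p/(p-1)}(\R^n)$ and $D\phi\in L^p(\R^n)$, so integrating this inequality and using the weak form of \eqref{TwoDeltaPDE} with test function $\phi$ gives
$$
\int_{\R^n}|Dv|^p\,dx\ \ge\ \int_{\R^n}|Du|^p\,dx+pc\bigl(\phi(x_0)-\phi(y_0)\bigr)\ =\ \int_{\R^n}|Du|^p\,dx,
$$
which is \eqref{vLowerEniii}.

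The implication $(iii)\Rightarrow(i)$ is the crux. If $u(x_0)=u(y_0)$, then testing \eqref{vLowerEniii} against the constant function $v\equiv u(x_0)$ forces $\int_{\R^n}|Du|^p\,dx=0$, so $u$ is constant and is trivially an extremal whose H\"older ratio (which is $0$) is attained at $x_0$ and $y_0$. Suppose now $u(x_0)\neq u(y_0)$. By Corollary \ref{ExistenceCor} there is an extremal $w$ with $w(x_0)=u(x_0)$ and $w(y_0)=u(y_0)$ whose H\"older ratio is maximized at $x_0$ and $y_0$; since $w$ satisfies $(i)$, the two implications already proved show $w$ satisfies $(iii)$ as well. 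Thus $u$ and $w$ both minimize $v\mapsto\int_{\R^n}|Dv|^p\,dx$ over the same class $\{v\in\mathcal{D}^{1,p}(\R^n):v(x_0)=u(x_0),\ v(y_0)=u(y_0)\}$, so $\int_{\R^n}|Du|^p\,dx=\int_{\R^n}|Dw|^p\,dx$. Because $w$ is an extremal with H\"older ratio attained at $x_0,y_0$, we have $C_*\|Dw\|_{L^p(\R^n)}=[w]_{C^{1-n/p}(\R^n)}=|w(x_0)-w(y_0)|/|x_0-y_0|^{1-n/p}=|u(x_0)-u(y_0)|/|x_0-y_0|^{1-n/p}$. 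Combining this with Morrey's inequality for $u$ and the trivial lower bound on the H\"older seminorm,
$$
\frac{|u(x_0)-u(y_0)|}{|x_0-y_0|^{1-n/p}}\ \le\ [u]_{C^{1-n/p}(\R^n)}\ \le\ C_*\|Du\|_{L^p(\R^n)}=C_*\|Dw\|_{L^p(\R^n)}=\frac{|u(x_0)-u(y_0)|}{|x_0-y_0|^{1-n/p}},
$$
so every inequality here is an equality; that is, $u$ is an extremal whose H\"older ratio is attained at $x_0,y_0$, which is $(i)$.

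I expect the only step with genuine content to be $(iii)\Rightarrow(i)$. The subtlety is that minimality in $(iii)$ by itself yields, via the Euler--Lagrange equation, only the weak identity $\int_{\R^n}|Du|^{p-2}Du\cdot D\phi\,dx=c(\phi(x_0)-\phi(y_0))$ for \emph{some} constant $c$ (one sees this by observing that $\phi\mapsto\int|Du|^{p-2}Du\cdot D\phi$ annihilates the codimension-two subspace $\{\phi:\phi(x_0)=\phi(y_0)=0\}$, and that testing against $\phi\equiv1$ forces the two resulting coefficients to be negatives of one another). This does not identify $c$, nor does it relate the minimal Dirichlet energy to the sharp constant $C_*$, which is precisely why one must invoke the existence result of Corollary \ref{ExistenceCor} and compare $u$ with a bona fide extremal carrying the same values at $x_0$ and $y_0$. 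The other two implications are routine: $(i)\Rightarrow(ii)$ is a reformulation of Proposition \ref{PDEprop}, and $(ii)\Rightarrow(iii)$ is the standard convexity computation indicated above.
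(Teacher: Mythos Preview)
Your proof is correct. The implications $(i)\Rightarrow(ii)$ and $(ii)\Rightarrow(iii)$ match the paper's argument essentially verbatim. For $(iii)\Rightarrow(i)$, however, you take a genuinely different route. The paper derives the first-order optimality condition from $(iii)$ to obtain $\int|Du|^{p-2}Du\cdot D(w-u)\,dx=0$, pairs it with the corresponding identity for $w$ coming from Proposition~\ref{PDEprop}, subtracts, and invokes the strict monotonicity of $z\mapsto|z|^{p-2}z$ to conclude $Du\equiv Dw$ and hence $u\equiv w$. You instead observe that $u$ and $w$ are both minimizers over the same constraint class, so their Dirichlet energies coincide, and then close with the sandwich
\[
\frac{|u(x_0)-u(y_0)|}{|x_0-y_0|^{1-n/p}}\le [u]_{C^{1-n/p}(\R^n)}\le C_*\|Du\|_{L^p(\R^n)}=C_*\|Dw\|_{L^p(\R^n)}=\frac{|u(x_0)-u(y_0)|}{|x_0-y_0|^{1-n/p}}.
\]
Your argument is shorter and avoids both the Euler--Lagrange computation and the monotonicity step; the paper's argument, on the other hand, yields the stronger conclusion $u\equiv w$ as a byproduct, effectively anticipating the uniqueness statement of Corollary~\ref{TrueUnique}. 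Your handling of the degenerate case $u(x_0)=u(y_0)$ by testing against a constant is also a clean addition not made explicit in the paper.
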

\begin{rem} In $(ii)$, \eqref{TwoDeltaPDE} means 
\be\label{WeakTwoDeltaPDE}
\int_{\R^n}|Du|^{p-2}Du\cdot D\phi dx=c(\phi(x_0)-\phi(y_0))
\ee
for each $\phi\in {\cal  D}^{1,p}(\R^n)$. 
\end{rem}

\begin{proof}
We have already showed that $(i)$ implies $(ii)$ in Proposition \ref{PDEprop}. Let us assume $(ii)$, and suppose $v\in {\cal  D}^{1,p}(\R^n)$ satisfies $v(x_0)=u(x_0)$ and $v(y_0)=u(y_0)$. Applying \eqref{WeakTwoDeltaPDE}, we find
\begin{align*}
\int_{\R^n}|Dv|^pdx&\ge \int_{\R^n}|Du|^pdx+p\int_{\R^n}|Du|^{p-2}Du\cdot (Dv-Du)dx\\
&=\int_{\R^n}|Du|^pdx+pc((v-u)(x_0)-(v-u)(y_0))\\
&=\int_{\R^n}|Du|^pdx.
\end{align*}
Therefore, $(ii)$ implies $(iii)$. 
\par Now suppose that $u$ satisfies $(iii)$ and $w$ is an extremal for which 
\be\label{wHolderNorm}
[w]_{C^{1-n/p}(\R^n)}=\frac{|w(x_0)-w(y_0)|}{|x_0-y_0|^{1-n/p}}
\ee
and
\be\label{tildewandu}
 w(x_0)=u(x_0)\quad \text{and}\quad  w(y_0)=u(y_0).
\ee
Choosing $v=u+t(w-u)$ in \eqref{vLowerEniii} gives
\begin{align*}
\int_{\R^n}|Du|^pdx&\le \int_{\R^n}|Du+tD(w-u)|^pdx\\
&=\int_{\R^n}|Du|^pdx+pt\int_{\R^n}|Du|^{p-2}Du\cdot D(w-u) dx +o(t)
\end{align*}
as $t\rightarrow 0$. It follows that 
\be\label{WeakTwoDeltaPDE2}
\int_{\R^n}|Du|^{p-2}Du\cdot D(w-u) dx=0.
\ee
By Proposition \ref{PDEprop}, we also have
$$
\int_{\R^n}|D w|^{p-2}D w\cdot D( w-u)dx=0.
$$
In particular, 
$$
\int_{\R^n}(|Du|^{p-2}Du-|D w|^{p-2}D w)\cdot D(u- w)dx=0.
$$
As the mapping $z\mapsto |z|^{p-2}z$ of $\R^n$ is strictly monotone, $Du\equiv D w$. In view of \eqref{tildewandu}, $u\equiv w $ and we conclude that $(iii)$ implies $(i)$.
\end{proof}

\subsection{Extremals in one spatial dimension}
Suppose $n=1$ and $u\in {\cal  D}^{1,p}(\R)$. As $u'\in L^p(\R)$, $u$ is absolutely continuous and 
\be\label{FunThmCalc}
|u(x)-u(y)|=\left|\int^x_yu'dz\right|\le \left(\int^x_y|u'|^pdz\right)^{1/p}|x-y|^{1-1/p}
\ee
for $y\le x$. As a result
\be\label{1DMorrey}
[u]_{C^{1-1/p}(\R)}\le \|u'\|_{L^p(\R)},
\ee
and Morrey's inequality holds with $C_*\le 1$.  Moreover, it is easy to check that equality holds for
\be\label{1Dextremalfun}
u(x)=
\begin{cases}
-1, \quad &x\in (-\infty,-1)\\
x, \quad & x\in [-1,1]\\
1, \quad & x\in (1,\infty).
\end{cases}
\ee
Therefore, $C_*=1$. We also note that the $1-1/p$ H\"older ratio of $u$ is maximized at $\pm1$. 

\par Conversely, if equality holds in \eqref{1DMorrey} for some $u\in {\cal  D}^{1,p}(\R)$ and if
\be\label{HolderMax1D}
[u]_{C^{1-1/p}(\R)}=\frac{|u(x_0)-u(y_0)|}{|x_0-y_0|^{1-1/p}}
\ee 
for some $y_0<x_0$, then \eqref{FunThmCalc} gives
$$
\left(\int^{x_0}_{y_0}|u'|^pdz\right)^{1/p}=\left(\int_{\R}|u'|^pdz\right)^{1/p}.
$$
It would then follow that $u'$ vanishes on $\R\setminus(y_0,x_0)$ and that $u'$ is necessarily constant on $[y_0, x_0]$.  
As a result, $u$ would be of the form \eqref{1Dextremalfun}. In the last section of this paper, we will verify \eqref{HolderMax1D}  and use 
a higher dimensional analog of \eqref{FunThmCalc} to verify that the H\"older ratio of each extremal is maximized at distinct points for $n\ge 2$. 

\begin{figure}[h]
\centering
 \includegraphics[width=.65\textwidth]{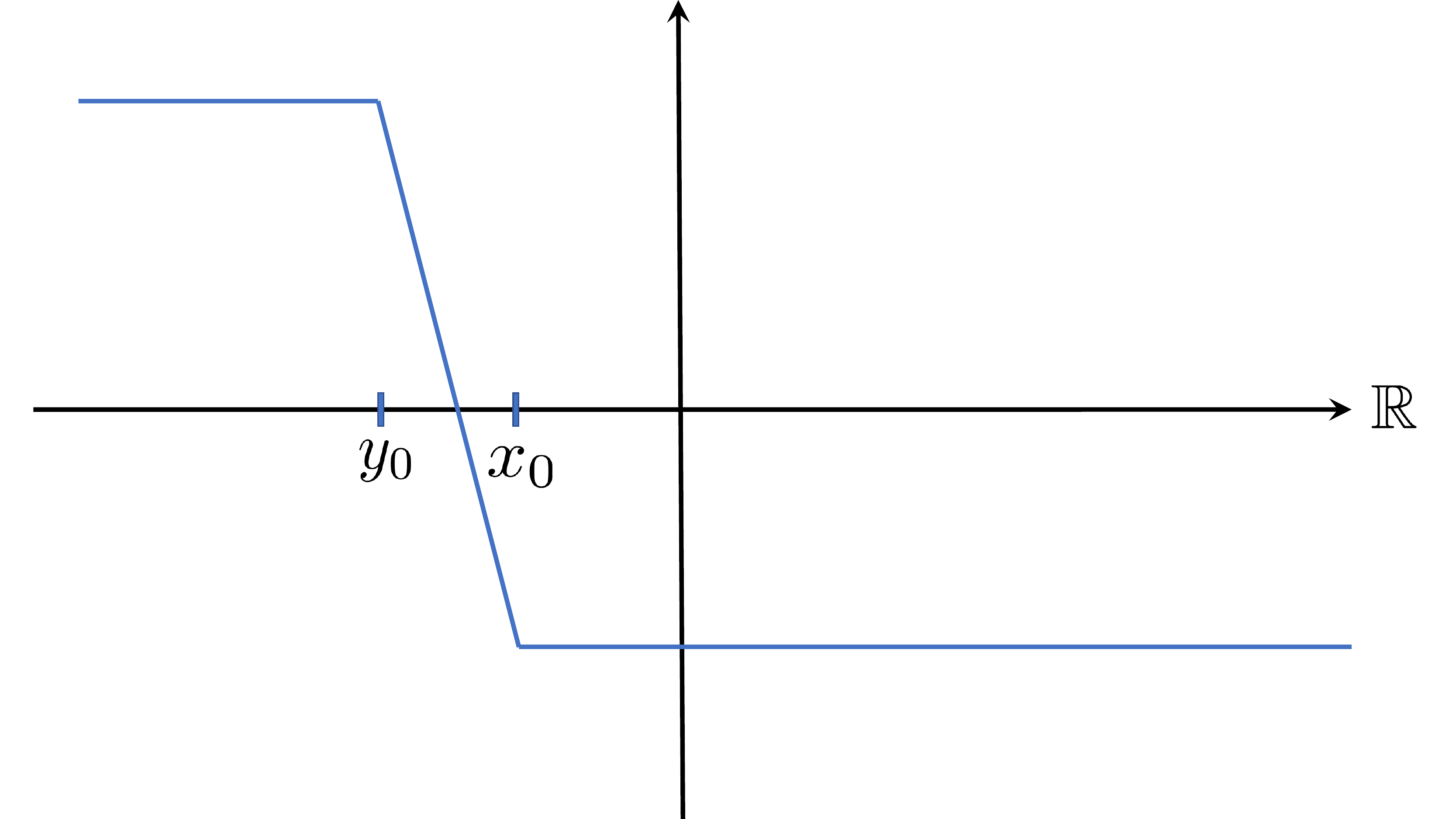}
 \caption{The graph of an extremal in one spatial dimension whose $1-1/p$ H\"older ratio is maximized at $x_0$ and $y_0$.}
\end{figure}

\section{Symmetry and pointwise bounds}
We will now use PDE \eqref{strongPDE} to verify a variety of assertions for extremals.  We start off by showing that extremals are unique up to the natural invariances associated with Morrey's inequality.  
This uniqueness is employed to derive some symmetry and antisymmetry properties. Then we will use the strong maximum principle and Hopf's boundary point lemma for $p$-harmonic functions to bound extremals above and below and to obtain a sign of a directional derivative along an antisymmetry plane for extremals, respectively.

\subsection{Uniqueness}
In Corollary \ref{ExistenceCor}, we showed that for and distinct $\alpha,\beta\in\R$ and $x_0, y_0\in \R^n$, there is an extremal $u\in{\cal  D}^{1,p}(\R^n)$ that satisfies $u(x_0)=\alpha$ and $u(x_0)=\beta$ and whose H\"older ratio is maximized at $x_0$ and $y_0$. It turns out that this extremal is the only one that satisfies these constraints.  The key observation needed to justify this statement is as follows. 
\begin{prop}\label{UniqueProp}
Suppose $x_0, y_0\in \R^n$ and $x_1, y_1\in \R^n$ are distinct, and assume $u, v\in {\cal  D}^{1,p}(\R^n)$ are nonconstant extremals with 
$$
[u]_{C^{1-n/p}(\R^n)}=\frac{u(x_0)-u(y_0)}{|x_0-y_0|^{1-n/p}}\quad \text{and}\quad 
[v]_{C^{1-n/p}(\R^n)}=\frac{v(x_1)-v(y_1)}{|x_1-y_1|^{1-n/p}}.
$$
Then for each orthogonal transformation $O$ of $\R^n$ which satisfies 
$$
O\left(\frac{y_0-x_0}{|y_0-x_0|}\right)=\frac{y_1-x_1}{|y_1-x_1|}
$$
and each $x\in \R^n$, 
\be
u(x)=\frac{u(y_0)-u(x_0)}{v(y_1)-v(x_1)}\cdot \left\{v\left(\frac{|x_1-y_1|}{|x_0-y_0|} O(x-x_0)+x_1\right)-v(x_1)\right\}+u(x_0).
\ee
\end{prop}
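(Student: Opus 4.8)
The plan is to transport $v$ to a suitably normalized extremal via the five invariances listed above and then appeal to the uniqueness already contained in Theorem~\ref{equivalentChar}. Since $u$ and $v$ are nonconstant extremals we have $[u]_{C^{1-n/p}(\R^n)}=C_*\|Du\|_{L^p(\R^n)}>0$ and $[v]_{C^{1-n/p}(\R^n)}=C_*\|Dv\|_{L^p(\R^n)}>0$, so $u(x_0)\neq u(y_0)$ and $v(x_1)\neq v(y_1)$; in particular the denominator $v(y_1)-v(x_1)$ in the asserted identity is nonzero, and its right-hand side defines a function $\tilde v$. This $\tilde v$ is obtained from $v$ by composing, in order, the translation $x\mapsto x-x_0$, the orthogonal map $O$, the dilation $x\mapsto\lambda x$ with $\lambda:=|x_1-y_1|/|x_0-y_0|$, the translation $x\mapsto x+x_1$, and finally an affine change of the dependent variable (multiplication by a constant followed by addition of a constant).

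First I would verify that $\tilde v\in{\cal D}^{1,p}(\R^n)$ and that $\tilde v$ is again an extremal. Each translation and the orthogonal map leave both of the seminorms $[\,\cdot\,]_{C^{1-n/p}(\R^n)}$ and $\|D(\cdot)\|_{L^p(\R^n)}$ unchanged; the dilation multiplies each of them by $\lambda^{1-n/p}$; and the affine change of the dependent variable multiplies each by the absolute value of the multiplicative constant. Hence the ratio $[\,\cdot\,]_{C^{1-n/p}(\R^n)}/\|D(\cdot)\|_{L^p(\R^n)}$ is preserved and still equals $C_*$, so $\tilde v$ is an extremal. Next I would pin down the base points: from $O((y_0-x_0)/|y_0-x_0|)=(y_1-x_1)/|y_1-x_1|$ one checks that the affine spatial map $x\mapsto\lambda O(x-x_0)+x_1$ carries $x_0$ to $x_1$ and $y_0$ to $y_1$, whence $\tilde v(x_0)=u(x_0)$ and $\tilde v(y_0)=u(y_0)$. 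Finally, collecting the scaling factors,
\[
[\tilde v]_{C^{1-n/p}(\R^n)}=\frac{|u(y_0)-u(x_0)|}{|v(y_1)-v(x_1)|}\,\lambda^{1-n/p}\,[v]_{C^{1-n/p}(\R^n)}=\frac{|u(x_0)-u(y_0)|}{|x_0-y_0|^{1-n/p}}=\frac{|\tilde v(x_0)-\tilde v(y_0)|}{|x_0-y_0|^{1-n/p}},
\]
where the middle step uses $[v]_{C^{1-n/p}(\R^n)}=|v(x_1)-v(y_1)|/|x_1-y_1|^{1-n/p}$. Thus $\tilde v$ is an extremal whose $1-n/p$ H\"older ratio is attained at $x_0$ and $y_0$.

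At this point $u$ and $\tilde v$ both satisfy condition $(i)$ of Theorem~\ref{equivalentChar} with respect to the same distinct pair $(x_0,y_0)$, and $u(x_0)=\tilde v(x_0)$, $u(y_0)=\tilde v(y_0)$. I would then rerun the uniqueness argument from the proof of that theorem: $u$ satisfies $(iii)$, so testing it against $v=u+t(\tilde v-u)$ and letting $t\to0$ gives $\int_{\R^n}|Du|^{p-2}Du\cdot D(\tilde v-u)\,dx=0$; by Proposition~\ref{PDEprop} applied to the extremal $\tilde v$ (with test function $\tilde v-u$, which vanishes at $x_0$ and $y_0$) we also have $\int_{\R^n}|D\tilde v|^{p-2}D\tilde v\cdot D(\tilde v-u)\,dx=0$; subtracting and invoking the strict monotonicity of $z\mapsto|z|^{p-2}z$ on $\R^n$ yields $Du\equiv D\tilde v$, and then $u(x_0)=\tilde v(x_0)$ forces $u\equiv\tilde v$. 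Unwinding the definition of $\tilde v$ is precisely the claimed identity, and since nothing above used anything special about $O$ beyond its defining property, the conclusion holds for every such orthogonal map.

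The only real obstacle is the bookkeeping: composing the five invariances in the correct order, checking that the spatial affine map sends $x_0\mapsto x_1$ and $y_0\mapsto y_1$, and tracking the scaling of the H\"older seminorm so that the normalizations $\tilde v(x_0)=u(x_0)$, $\tilde v(y_0)=u(y_0)$ and the attainment of $[\tilde v]_{C^{1-n/p}(\R^n)}$ at $(x_0,y_0)$ all hold simultaneously. Once this is arranged the statement is an immediate consequence of the uniqueness established inside Theorem~\ref{equivalentChar}.
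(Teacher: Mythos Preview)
Your proposal is correct and follows essentially the same approach as the paper: define $\tilde v$ from $v$ via the invariances, verify $\tilde v$ is an extremal whose H\"older ratio is attained at $(x_0,y_0)$ with $\tilde v(x_0)=u(x_0)$ and $\tilde v(y_0)=u(y_0)$, and then use strict monotonicity of $z\mapsto|z|^{p-2}z$ to force $u\equiv\tilde v$. The only cosmetic difference is that the paper obtains both orthogonality relations by applying Proposition~\ref{PDEprop} directly to $u$ and to $\tilde u$ with test function $u-\tilde u$, whereas you derive the relation for $u$ by passing through condition $(iii)$ of Theorem~\ref{equivalentChar}; the resulting identities and the conclusion are identical.
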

\begin{proof}
Set 
\be
\tilde u(x):=\frac{u(y_0)-u(x_0)}{v(y_1)-v(x_1)}\cdot \left\{v\left(\frac{|x_1-y_1|}{|x_0-y_0|} O(x-x_0)+x_1\right)-v(x_1)\right\}+u(x_0)
\ee
for $x\in \R^n$. By design,
\be\label{TwoPoints}
\tilde u(x_0)=u(x_0)\quad \text{and}\quad \tilde u(y_0)=u(y_0).
\ee
And in view of the invariances of the seminorms associated with Morrey's inequality, $\tilde u$ is a nonconstant extremal with 
$$
[\tilde u]_{C^{1-n/p}(\R^n)}=\frac{\tilde u(x_0)-\tilde u(y_0)}{|x_0-y_0|^{1-n/p}}.
$$
Consequently, both $u$ and $\tilde u$ are both weak solutions of the PDE \eqref{strongPDE}.  

\par Evaluating \eqref{weakPDE} at $\phi=u-\tilde u$ gives
\begin{align*}
\int_{\R^n}|D u|^{p-2}D u\cdot (Du-D\tilde u) dx=0,
\end{align*}
and similarly, 
\be
\int_{\R^n}|D \tilde u|^{p-2}D \tilde u\cdot (D u-D\tilde u) dx=0.
\ee
Subtracting these equalities gives 
$$
\int_{\R^n}(|D u|^{p-2}D u- |D\tilde u|^{p-2}D\tilde u)\cdot (D u - D\tilde u)dx=0.
$$
Therefore, $u-\tilde u$ is constant and by \eqref{TwoPoints} must in fact vanish identically. 
\end{proof}
\begin{cor}\label{TrueUnique}
Suppose $u_1, u_2\in {\cal  D}^{1,p}(\R^n)$ are nonconstant extremals whose $1-n/p$ H\"older seminorms are both maximized at a pair of distinct points $x_0,y_0\in\R^n$. If in addition  
$$
u_1(x_0)=u_2(x_0)\quad \text{and}\quad u_1(y_0)=u_2(y_0),
$$
then $u_1(x)=u_2(x)$ for all $x\in \R^n$. 
\end{cor}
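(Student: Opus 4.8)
The plan is to read this off from Proposition~\ref{UniqueProp} after a small amount of sign bookkeeping. First I would note that, since $u_1$ and $u_2$ are nonconstant extremals, each seminorm $[u_i]_{C^{1-n/p}(\R^n)}=C_*\|Du_i\|_{L^p(\R^n)}$ is strictly positive, so $u_i(x_0)\neq u_i(y_0)$. The hypotheses $u_1(x_0)=u_2(x_0)$ and $u_1(y_0)=u_2(y_0)$ then give $u_1(x_0)-u_1(y_0)=u_2(x_0)-u_2(y_0)\neq 0$; in particular these two differences have the same sign, so after relabeling $x_0$ and $y_0$ if necessary I may assume $u_i(x_0)>u_i(y_0)$ for $i=1,2$. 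This puts both functions into exactly the form required by the hypotheses of Proposition~\ref{UniqueProp}, namely $[u_i]_{C^{1-n/p}(\R^n)}=\bigl(u_i(x_0)-u_i(y_0)\bigr)/|x_0-y_0|^{1-n/p}$.

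Next I would invoke Proposition~\ref{UniqueProp} with $u=u_1$, $v=u_2$, $(x_1,y_1)=(x_0,y_0)$, and $O$ the identity orthogonal transformation, which trivially satisfies $O\bigl((y_0-x_0)/|y_0-x_0|\bigr)=(y_0-x_0)/|y_0-x_0|$. The displayed conclusion then becomes
\[
u_1(x)=\frac{u_1(y_0)-u_1(x_0)}{u_2(y_0)-u_2(x_0)}\,\bigl\{u_2(x)-u_2(x_0)\bigr\}+u_1(x_0),
\]
and since $u_1(y_0)-u_1(x_0)=u_2(y_0)-u_2(x_0)$ by hypothesis, the coefficient equals $1$ and the right-hand side collapses to $u_2(x)-u_2(x_0)+u_1(x_0)=u_2(x)$ for every $x\in\R^n$, which is the assertion.

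If one prefers a self-contained argument, the same conclusion follows by rerunning the proof of Proposition~\ref{UniqueProp}: by Proposition~\ref{PDEprop} both $u_1$ and $u_2$ satisfy the weak equation \eqref{weakPDE}, and because they agree at $x_0$ and $y_0$ the right-hand sides of these two equations coincide; testing each against $\phi=u_1-u_2\in{\cal  D}^{1,p}(\R^n)$ and subtracting yields $\int_{\R^n}\bigl(|Du_1|^{p-2}Du_1-|Du_2|^{p-2}Du_2\bigr)\cdot(Du_1-Du_2)\,dx=0$, whence $Du_1=Du_2$ almost everywhere by strict monotonicity of $z\mapsto|z|^{p-2}z$ on $\R^n$, and then $u_1\equiv u_2$ since the two functions agree at the point $x_0$. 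I do not expect any genuine obstacle here; the only point that needs care is the sign bookkeeping in the first step, which ensures that the two instances of \eqref{weakPDE} have the same source term so that Proposition~\ref{UniqueProp} applies verbatim.
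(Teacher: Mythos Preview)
Your proposal is correct and follows essentially the same approach as the paper: reduce to the case where both differences $u_i(x_0)-u_i(y_0)$ are positive and then apply Proposition~\ref{UniqueProp} with $O=I_n$. The only cosmetic difference is that the paper achieves the sign normalization by replacing $u_1,u_2$ with $-u_1,-u_2$ rather than by swapping $x_0$ and $y_0$, and it omits the explicit computation of the resulting formula that you carry out.
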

\begin{proof}
We can assume that $u_1(x_0)>u_1(y_0)$, or else we can prove this corollary for $-u_1$ and $-u_2$.  The assertion is then immediate once we realize we can select $O=I_n$ in the statement of Proposition \ref{UniqueProp} with $u=u_1$ and $v=u_2$.
\end{proof}
A closer inspection of the previous assertion implies that any extremal $u$ whose $1-n/p$ H\"older ratio achieves its maximum at a pair of distinct points possesses a strong 
symmetry property.  Specifically, $u$ will be cylindrically symmetric and its axis of symmetry is the line through the points at which its $1-n/p$ H\"older seminorm is achieved.
\begin{cor}
Suppose $u\in {\cal  D}^{1,p}(\R^n)$ is an extremal with 
$$
[u]_{C^{1-n/p}(\R^n)}=\frac{|u(x_0)-u(y_0)|}{|x_0-y_0|^{1-n/p}}.
$$
Then
\be
u(x)=u(O (x-x_0)+x_0), \quad x\in \R^n
\ee
for any orthogonal transformation $O$ which satisfies 
$$
O\left(y_0-x_0\right)=y_0-x_0.
$$
\end{cor}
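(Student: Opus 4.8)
The plan is to derive this as a direct consequence of the uniqueness statement in Corollary~\ref{TrueUnique}. If $u$ is constant the identity $u(x)=u(O(x-x_0)+x_0)$ holds automatically, so I may assume $u$ is nonconstant; then $[u]_{C^{1-n/p}(\R^n)}>0$, and since this seminorm equals $|u(x_0)-u(y_0)|/|x_0-y_0|^{1-n/p}$ by hypothesis we have in particular $u(x_0)\neq u(y_0)$.

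The idea is to compare $u$ with
\[
\tilde u(x):=u\bigl(O(x-x_0)+x_0\bigr),\qquad x\in\R^n,
\]
and show the two coincide. Since $\tilde u$ arises from $u$ by composing a translation, an orthogonal transformation, and a further translation, each of which leaves invariant both the seminorm $[\,\cdot\,]_{C^{1-n/p}(\R^n)}$ and the functional $w\mapsto\|Dw\|_{L^p(\R^n)}$, the function $\tilde u$ is again a nonconstant extremal. Using the hypothesis $O(y_0-x_0)=y_0-x_0$ one checks directly that $\tilde u(x_0)=u(x_0)$ and $\tilde u(y_0)=u\bigl((y_0-x_0)+x_0\bigr)=u(y_0)$, whence
\[
\frac{|\tilde u(x_0)-\tilde u(y_0)|}{|x_0-y_0|^{1-n/p}}=\frac{|u(x_0)-u(y_0)|}{|x_0-y_0|^{1-n/p}}=[u]_{C^{1-n/p}(\R^n)}=[\tilde u]_{C^{1-n/p}(\R^n)},
\]
so the $1-n/p$ H\"older ratio of $\tilde u$ is maximized at the same pair $x_0,y_0$.

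With this in hand, $u$ and $\tilde u$ are nonconstant extremals whose $1-n/p$ H\"older seminorms are both maximized at the distinct points $x_0,y_0$ and which agree at $x_0$ and at $y_0$; Corollary~\ref{TrueUnique} then forces $u\equiv\tilde u$, which is precisely the asserted cylindrical symmetry. The argument is short, and I do not anticipate a substantive obstacle: the only step demanding care is the bookkeeping that shows $\tilde u$ inherits the property of having its H\"older ratio maximized exactly at $x_0,y_0$ — this is where the constraint $O(y_0-x_0)=y_0-x_0$ enters — along with the trivial remark that nonconstancy ensures the hypotheses of Corollary~\ref{TrueUnique} are met.
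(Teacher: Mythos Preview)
Your argument is correct and follows essentially the same route as the paper: define $\tilde u(x)=u(O(x-x_0)+x_0)$, observe it is an extremal with the same H\"older seminorm, check $\tilde u(x_0)=u(x_0)$ and $\tilde u(y_0)=u(y_0)$, and invoke the uniqueness result (Corollary~\ref{TrueUnique}). Your version is slightly more explicit in handling the constant case and in spelling out why the H\"older ratio of $\tilde u$ is maximized at $x_0,y_0$, but the substance is the same.
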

\begin{proof}
Set
$$
v(x):=u(O (x-x_0)+x_0), \quad x\in \R^n. 
$$
The claim follows once we observe that $v$ is an extremal with $[v]_{C^{1-n/p}(\R^n)}=[u]_{C^{1-n/p}(\R^n)}$, $v(x_0)=u(x_0)$, and $v(y_0)=u(y_0)$.
\end{proof}
Extremals also have a certain antisymmetry property.  We recall that the orthogonal reflection about the hyperplane $\{x\in\R^n: x\cdot a=c\}$ 
is given by the transformation
$$
x\mapsto x -2\frac{(x\cdot a-c)}{|a|^2}a.
$$
Note in particular that this mapping is a composition of an orthogonal transformation and a translation.  We will show below that $u-\frac{1}{2}(u(x_0)+u(y_0))$ is antisymmetric about the hyperplane with normal pointing in the same direction as $x_0-y_0$ and that passes through the midpoint of $x_0$ and $y_0$. 
\begin{prop}\label{reflectionProp}
Suppose $u\in {\cal  D}^{1,p}(\R^n)$ is an extremal with 
$$
[u]_{C^{1-n/p}(\R^n)}=\frac{|u(x_0)-u(y_0)|}{|x_0-y_0|^{1-n/p}}.
$$
Then 
$$
u\left(x -2\frac{\left((x_0-y_0)\cdot (x-\frac{1}{2}(x_0+y_0)\right)}{|x_0-y_0|^2}(x_0-y_0)\right)-\frac{u(x_0)+u(y_0)}{2}
=-\left(u(x)-\frac{u(x_0)+u(y_0)}{2}\right)
$$
for each $x\in \R^n$. 
\end{prop}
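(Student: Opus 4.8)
The plan is to exhibit the ``reflected and negated'' version of $u$ as another extremal carrying the same two-point data as $u$, and then invoke the uniqueness statement of Corollary \ref{TrueUnique}. Let $R:\R^n\to\R^n$ denote the orthogonal reflection about the hyperplane with normal $x_0-y_0$ through $\frac{1}{2}(x_0+y_0)$, that is,
$$
Rx = x - 2\,\frac{\left((x_0-y_0)\cdot\left(x-\tfrac{1}{2}(x_0+y_0)\right)\right)}{|x_0-y_0|^2}(x_0-y_0).
$$
Since $x_0-\tfrac{1}{2}(x_0+y_0)=\tfrac{1}{2}(x_0-y_0)$, a direct computation gives $Rx_0=y_0$ and, symmetrically, $Ry_0=x_0$. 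If $u$ is constant the asserted identity is trivial (both sides vanish), so assume $u$ is nonconstant.

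Next I would set
$$
v(x) := u(x_0)+u(y_0)-u(Rx),\qquad x\in\R^n,
$$
so that the identity to be proved is exactly $v\equiv u$. The map $R$ is a composition of an orthogonal transformation and a translation, and passing from $u(R\,\cdot)$ to $u(x_0)+u(y_0)-u(R\,\cdot)$ is multiplication by $-1$ followed by addition of a constant; all of these operations are among the invariances of the seminorms $[\cdot]_{C^{1-n/p}(\R^n)}$ and $\|D\cdot\|_{L^p(\R^n)}$ listed earlier. Hence $v\in{\cal D}^{1,p}(\R^n)$ is again an extremal, and it is nonconstant because $u$ is.

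It then remains to check the normalization. Using $Rx_0=y_0$ and $Ry_0=x_0$,
$$
v(x_0)=u(x_0)+u(y_0)-u(y_0)=u(x_0),\qquad v(y_0)=u(x_0)+u(y_0)-u(x_0)=u(y_0),
$$
and therefore
$$
\frac{|v(x_0)-v(y_0)|}{|x_0-y_0|^{1-n/p}}=\frac{|u(x_0)-u(y_0)|}{|x_0-y_0|^{1-n/p}}=[u]_{C^{1-n/p}(\R^n)}=[v]_{C^{1-n/p}(\R^n)},
$$
so the $1-n/p$ H\"older ratio of $v$ is maximized at the same pair $x_0,y_0$. Applying Corollary \ref{TrueUnique} to the nonconstant extremals $u$ and $v$, which agree at $x_0$ and at $y_0$ and whose H\"older seminorms are both maximized there, yields $u\equiv v$, which is the claim.

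I do not expect a genuine obstacle here: the entire content is the uniqueness of an extremal pinned down by its two values at $x_0,y_0$ together with the location of the maximizing pair, which is Corollary \ref{TrueUnique}. The only points requiring care are purely bookkeeping: verifying algebraically that $R$ interchanges $x_0$ and $y_0$, and confirming that $v$ inherits extremality from $u$ through the listed symmetries (in particular that a hyperplane reflection is an admissible composition of an orthogonal map and a translation). Both are routine.
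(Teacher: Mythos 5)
Your proposal is correct and follows essentially the same route as the paper: define $v(x)=u(x_0)+u(y_0)-u(Rx)$, note that $v$ is again an extremal (reflection is an orthogonal map composed with a translation, and negation plus adding a constant are invariances) with $v(x_0)=u(x_0)$, $v(y_0)=u(y_0)$ and H\"older ratio maximized at the same pair, and conclude $v\equiv u$ from Corollary \ref{TrueUnique}. Your explicit check that $R$ swaps $x_0$ and $y_0$ and your remark on the trivial constant case are fine, minor additions to the paper's argument.
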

\begin{proof}
Define
$$
v(x):=u(x_0)+u(y_0)-u\left(x -2\frac{\left((x_0-y_0)\cdot (x-\frac{1}{2}(x_0+y_0)\right)}{|x_0-y_0|^2}(x_0-y_0)\right)
$$
We note that  
$$
x\mapsto x -2\frac{\left((x_0-y_0)\cdot (x-\frac{1}{2}(x_0+y_0)\right)}{|x_0-y_0|^2}(x_0-y_0)
$$
is orthogonal reflection about the hyperplane with normal $x_0-y_0$ which passes through $(x_0+y_0)/2$. 
Since this map is a composition of an orthogonal transformation and a translation, $v$ is an extremal with $[u]_{C^{1-n/p}(\R^n)}=[v]_{C^{1-n/p}(\R^n)}$.  As 
$$
v(x_0)=u(x_0)\quad\text{and}\quad v(y_0)=u(y_0),
$$
$v(x)=u(x)$ for all $x\in \R^n$ by Corollary \ref{TrueUnique}. 
\end{proof}

\subsection{Pointwise bounds}
As mentioned above, we will argue that each extremal is uniformly bounded. In particular, if $u\in {\cal  D}^{1,p}(\R^n)$ is an extremal with 
$$
[u]_{C^{1-n/p}(\R^n)}=\frac{|u(x_0)-u(y_0)|}{|x_0-y_0|^{1-n/p}},
$$
we will show that 
\be\label{PointwiseBounds}
\min\{u(x_0),u(y_0)\}\le u(x)\le \max\{u(x_0),u(y_0)\}
\ee
for all $x\in \R^n$.   When $n=1$, these inequalities are immediate in view of the explicit extremal \eqref{1Dextremalfun}.  Consequently, we will focus on the case $n\ge 2$ and also prove a refinement of \eqref{PointwiseBounds}.  
\begin{prop}
Suppose $n\ge 2$, $u\in {\cal  D}^{1,p}(\R^n)$ is an extremal with 
$$
[u]_{C^{1-n/p}(\R^n)}=\frac{u(x_0)-u(y_0)}{|x_0-y_0|^{1-n/p}}>0,
$$
and set
$$
\Pi_\pm:=\left\{x\in \R^n: \pm \left(x-\frac{1}{2}(x_0+y_0)\right)\cdot(x_0-y_0)>0\right\}.
$$ 
Then 
\be\label{ubetweenzeroandone}
\frac{u(x_0)+u(y_0)}{2}<u(x)<u(x_0)
\ee
for $ x\in \Pi_+$ and 
\be\label{ubetweenminusoneandzero}
u(y_0)<u(x)<\frac{u(x_0)+u(y_0)}{2}
\ee
for $ x\in \Pi_-$.
\end{prop}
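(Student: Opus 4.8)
The plan is to obtain the bounds in two stages: first a non-strict version, derived from the convex minimization characterization $(iii)$ of Theorem \ref{equivalentChar} (which lets us control $u$ globally, including ``at infinity'', with no decay hypothesis), and then an upgrade to strict inequalities via the strong maximum principle for $p$-harmonic functions. Throughout, write $\mu:=\tfrac12(u(x_0)+u(y_0))$, so that $u(y_0)<\mu<u(x_0)$ since $u(x_0)>u(y_0)$. Note $x_0\in\Pi_+$ and $y_0\in\Pi_-$, that $u$ is an extremal with H\"older ratio attained at $x_0,y_0$ (so it satisfies $(iii)$ and, by Proposition \ref{PDEprop} and the discussion after it, is $p$-harmonic on $\R^n\setminus\{x_0,y_0\}$), that $u$ is nonconstant, and that Proposition \ref{reflectionProp} forces $u\equiv\mu$ on the separating hyperplane $H:=\partial\Pi_+=\partial\Pi_-$. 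Finally, since $n\ge2$, both $\R^n\setminus\{x_0,y_0\}$ and $\Pi_+\setminus\{x_0\}$ are connected.

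\textbf{Non-strict bounds.} I would test $(iii)$ against the competitor $v$ obtained by truncating $u$ into $[\mu,u(x_0)]$ on $\Pi_+$ and into $[u(y_0),\mu]$ on $\Pi_-$, namely $v:=\min\{\max\{u,\mu\},u(x_0)\}$ on $\Pi_+$, $v:=\max\{\min\{u,\mu\},u(y_0)\}$ on $\Pi_-$, and $v:=\mu$ on $H$. Since $u$ is continuous with $u\equiv\mu$ on $H$, the function $v$ is continuous, bounded, and Sobolev on each closed half-space with $|Dv|\le|Du|$ a.e., hence $v\in{\cal D}^{1,p}(\R^n)$; and $v(x_0)=u(x_0)$, $v(y_0)=u(y_0)$ because $u(y_0)<\mu<u(x_0)$. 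Then $(iii)$ gives $\|Du\|_{L^p(\R^n)}\le\|Dv\|_{L^p(\R^n)}\le\|Du\|_{L^p(\R^n)}$, so $|Dv|=|Du|$ a.e., which forces $Du=0$ a.e. on each of the four open sets $\{u<\mu\}\cap\Pi_+$, $\{u>u(x_0)\}\cap\Pi_+$, $\{u>\mu\}\cap\Pi_-$, $\{u<u(y_0)\}\cap\Pi_-$. Each of these is empty: on a connected component $W$ of such a set, $Du\equiv0$ makes $u$ equal to a single constant on $W$, which by continuity must be the value of $u$ at every point of $\partial W$; but every such boundary point lies either on $H$ (where $u=\mu$) or in the relevant half-space outside the super/sublevel set, and in all cases $u$ there is strictly on the other side of that constant --- so $\partial W=\emptyset$, hence $W$ is clopen in $\R^n$ and therefore empty, being a proper subset. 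This yields $\mu\le u\le u(x_0)$ on $\Pi_+$, $u(y_0)\le u\le\mu$ on $\Pi_-$, and in particular $u(y_0)\le u\le u(x_0)$ on all of $\R^n$.

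\textbf{Strict bounds and the half-space $\Pi_-$.} If $u(\bar x)=u(x_0)$ for some $\bar x\ne x_0$, then $\bar x\ne y_0$ (as $u(y_0)<u(x_0)$), so $u$ attains an interior maximum in the connected open set $\R^n\setminus\{x_0,y_0\}$ on which it is $p$-harmonic; the strong maximum principle then makes $u$ constant there, hence on $\R^n$ by continuity, contradicting $u(y_0)<u(x_0)$. Thus $u<u(x_0)$ on $\R^n\setminus\{x_0\}$. Likewise, $u\ge\mu$ on $\Pi_+$ and $u$ is $p$-harmonic on the connected set $\Pi_+\setminus\{x_0\}$, so an equality $u(\bar x)=\mu$ with $\bar x\in\Pi_+\setminus\{x_0\}$ would be an interior minimum and would force $u\equiv\mu$ on $\Pi_+$, against $u(x_0)>\mu$; hence $u>\mu$ on $\Pi_+$. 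Together these give \eqref{ubetweenzeroandone} (with $x\ne x_0$ understood in the strict upper bound). For \eqref{ubetweenminusoneandzero}, let $x\in\Pi_-$ and let $x'$ be its reflection in $H$; then $x'\in\Pi_+$, and $x'\ne x_0$ when $x\ne y_0$, so $\mu<u(x')<u(x_0)$, while Proposition \ref{reflectionProp} gives $u(x)=2\mu-u(x')$, whence $u(y_0)=2\mu-u(x_0)<u(x)<\mu$.

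\textbf{Main obstacle.} The step I expect to be the crux --- and the reason a bare maximum-principle argument is not enough --- is the non-strict bound above: since $\Pi_+$ and $\Pi_-$ are unbounded and no a priori decay of $u$ at infinity is available, the maximum principle cannot be applied directly on the half-spaces, and it is exactly the minimization characterization $(iii)$, via the truncated competitor $v$, that rules out unbounded ``bad'' superlevel/sublevel components. The remaining delicate points are bookkeeping: verifying $v\in{\cal D}^{1,p}(\R^n)$ (continuity across $H$ together with Sobolev regularity on each closed half-space and $|Dv|\le|Du|$), and running the component argument with care --- for instance, a boundary point of a component of $\{u>u(x_0)\}\cap\Pi_+$ has $u$-value $\le u(x_0)$, whereas on $H$ one has $u=\mu<u(x_0)$.
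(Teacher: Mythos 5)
Your proof is correct, and it rests on the same pillars as the paper's: part $(iii)$ of Theorem \ref{equivalentChar} tested against a truncation of $u$, the antisymmetry of Proposition \ref{reflectionProp} (which gives $u\equiv\mu:=\tfrac12(u(x_0)+u(y_0))$ on the separating hyperplane and transfers the bounds from $\Pi_+$ to $\Pi_-$), and the strong maximum principle for the strict inequalities. The execution of the middle step differs. The paper reflects a half-space competitor oddly across the hyperplane, thereby showing that $u|_{\Pi_+}$ minimizes the $p$-Dirichlet energy among functions vanishing on $\partial\Pi_+$ with the value $u(x_0)$ at $x_0$, and it extracts the pointwise conclusion $u=\min\{u,u(x_0)\}$ (and similarly $u=\max\{u,\mu\}$) from the equality case of that minimality, which is settled by appealing to the uniqueness statement, Corollary \ref{TrueUnique}. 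You instead truncate globally on $\R^n$ (into $[\mu,u(x_0)]$ on $\Pi_+$ and $[u(y_0),\mu]$ on $\Pi_-$), deduce $|Dv|=|Du|$ a.e.\ from equality in $(iii)$, and eliminate the bad super/sublevel sets by the $Du=0$ plus clopen-component argument; this bypasses both the odd-reflection step and Corollary \ref{TrueUnique}, at the cost of the (correct, but slightly fussier) gluing of the two truncations across the hyperplane and the connectedness bookkeeping. Your strictness step for the upper bound also runs the maximum principle on $\R^n\setminus\{x_0,y_0\}$ rather than on $\Pi_+\setminus\{x_0\}$ with the boundary condition $u|_{\partial\Pi_+}=\mu$, which is equally valid. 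One shared caveat: as you note, the strict bound $u<u(x_0)$ can only hold on $\Pi_+\setminus\{x_0\}$ (and likewise $u>u(y_0)$ on $\Pi_-\setminus\{y_0\}$), which is exactly the reading the paper's own proof gives to the stated inequalities.
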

\begin{proof}
We will only prove \eqref{ubetweenzeroandone} as a similar argument can be used to justify \eqref{ubetweenminusoneandzero}.   Moreover, we will suppose $x_0=e_n$, $y_0=-e_n$, $u( e_n)= 1$, and $u(-e_n)= -1$.  Here, $\Pi_{+}=\{x\in \R^n: x_n>0\}$. The general case would then follow by an appropriate change of variables. 

\par Observe that Proposition \ref{reflectionProp} implies 
$$
u|_{\partial\Pi_+}=0.
$$
We claim that 
\be\label{wClaimComp}
\int_{\Pi_+}|Du|^pdx\le \int_{\Pi_+}|Dw|^pdx 
\ee
for each $w\in  {\cal  D}^{1,p}(\R^n)$ which satisfies 
\be\label{wClaimConst}
w|_{\partial\Pi_+}=0\quad \text{and}\quad w(e_n)=1
\ee
and that equality holds in \eqref{wClaimComp} if and only if $w|_{\Pi_+}=u|_{\Pi_+}$. 

\par For a given $w\in  {\cal  D}^{1,p}(\R^n)$ satisfying \eqref{wClaimConst}, we can set
$$
v(x)=
\begin{cases}
w(x), & x\in\Pi_+\\
-w\left(x -2x_ne_n\right), & x\not\in\Pi_+ 
\end{cases}
$$
and check that $v\in {\cal  D}^{1,p}(\R^n)$. As 
$$
v(e_n)=w(e_n)=1\quad \text{and}\quad v(-e_n)=-w(e_n)=-1,
$$ 
Theorem \ref{equivalentChar} gives
$$
2\int_{\Pi_+}|Du|^pdx=\int_{\R^n}|Du|^pdx\le \int_{\R^n}|Dv|^pdx=2\int_{\Pi_+}|Dw|^pdx.
$$
This proves \eqref{wClaimComp}. If equality holds, then $v$ is an extremal (by Theorem \ref{equivalentChar}) which is necessarily equal to $u$ by Corollary \ref{TrueUnique}. 

\par Let us now choose 
$$
w(x)=\min\{u(x),1\}, \quad x\in\R^d.
$$
As $w$ fulfills the constraints \eqref{wClaimConst}, \eqref{wClaimComp} gives 
$$
\int_{\Pi_+}|Du|^pdx\le \int_{\Pi_+}|Dw|^pdx=\int_{\Pi_+\cap\{u\le 1\}}|Du|^pdx\le \int_{\Pi_+}|Du|^pdx.
$$
Consequently, 
$$
u(x)=w(x)=\min\{u(x),1\}\le 1
$$
for each $x\in \Pi_+$.  

\par Since $u$ is $p$-harmonic in the  domain $\Pi_+\setminus\{e_n\}$, either $u<1$ in 
or $u\equiv 1$ throughout $\Pi_+\setminus\{e_n\}$ by the strong maximum principle (Chapter 2 of \cite{MR2242021}). Since $u|_{\partial\Pi_+}=0$, it must be that $u<1$ in $\Pi_+\setminus\{e_n\}$.  Analogously, we can employ $w(x)=\max\{u(x),0\}$ for $x\in \Pi_+$ to deduce $u>0$ in  $\Pi_+$. We leave the details to the reader. 
\end{proof}

We finally assert that a certain directional derivative of extremals always has a sign. This observation will be useful when we discuss 
the analyticity of extremal functions. 
\begin{prop}
Assume $n\ge2$, $u\in {\cal  D}^{1,p}(\R^n)$ is an extremal with 
$$
[u]_{C^{1-n/p}(\R^n)}=\frac{u(x_0)-u(y_0)}{|x_0-y_0|^{1-n/p}}>0
$$
and set
$$
\Pi_+:=\left\{x\in \R^n: \left(x-\frac{1}{2}(x_0+y_0)\right)\cdot(x_0-y_0)>0\right\}.
$$ 
Then 
\be\label{NormalDerivativeu}
Du(x)\cdot (x_0-y_0)>0
\ee
for $ x\in \partial\Pi_+$.  
\end{prop}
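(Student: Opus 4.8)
As in the proof of the preceding proposition, I would first reduce by an affine change of variables to the normalized situation $x_0=e_n$, $y_0=-e_n$, $u(e_n)=1$, $u(-e_n)=-1$, so that $\Pi_+=\{x\in\R^n:x_n>0\}$, $\partial\Pi_+=\{x_n=0\}$, and $x_0-y_0=2e_n$; in these coordinates the claim \eqref{NormalDerivativeu} becomes $u_{x_n}(z)>0$ for every $z\in\R^n$ with $z_n=0$. Recall that $u$ is a weak solution of $-\Delta_pu=c(\delta_{e_n}-\delta_{-e_n})$ in $\R^n$, hence $p$-harmonic, and therefore $C^{1,\alpha}_{\mathrm{loc}}$, on $\R^n\setminus\{e_n,-e_n\}$, which contains a full neighborhood of any such $z$. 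In particular $Du(z)$ is well defined and, since $u$ vanishes identically on the hyperplane $\{x_n=0\}$ (by Proposition \ref{reflectionProp}, which forces the antisymmetric function $u$ to vanish on its plane of antisymmetry), all tangential components of $Du(z)$ are zero, so $Du(z)=u_{x_n}(z)\,e_n$ and the claim amounts to $u_{x_n}(z)\ne 0$.

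Next I would assemble the two ingredients supplied by the earlier results in these coordinates: $u=0$ on $\partial\Pi_+$, and $0<u<1$ on $\Pi_+\setminus\{e_n\}$ (the preceding proposition). Fix $z$ with $z_n=0$, choose $r>0$ small enough that $e_n\notin\overline{B_r(z)}$, and set $\Omega:=B_r(z)\cap\{x_n>0\}$. Then $u$ is $p$-harmonic in $\Omega$, continuous on $\overline\Omega$, and satisfies $u>0=u(z)$ throughout $\Omega$, so $z$ is a boundary minimum of $u$ that is not attained in the interior of $\Omega$. Moreover the flat portion of $\partial\Omega$ through $z$ trivially satisfies an interior ball condition at $z$: any sufficiently small ball tangent to $\{x_n=0\}$ at $z$ lies inside $\Omega$.

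With this setup, Hopf's boundary point lemma for $p$-harmonic functions (see, e.g., Chapter~2 of \cite{MR2242021}) applies at $z$ and gives
$$
\liminf_{t\to 0^+}\frac{u(z+te_n)-u(z)}{t}>0.
$$
Since $u$ is $C^1$ near $z$, this $\liminf$ is an actual limit equal to $u_{x_n}(z)$; hence $u_{x_n}(z)>0$ and $Du(z)\cdot(x_0-y_0)=2u_{x_n}(z)>0$. Undoing the change of variables yields \eqref{NormalDerivativeu} in general.

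The step I expect to be the crux is the application of Hopf's lemma itself: the operator $\Delta_p$ degenerates (for $p>2$) or is singular (for $p<2$) exactly where $Du=0$, so the boundary point lemma is not a formal consequence of linear theory and in particular cannot be replaced by the cheap observation that $Du(z)$ is normal to the hyperplane. One must invoke the genuinely nonlinear version—valid for weak $p$-harmonic functions on a domain with an interior ball condition and a strict interior minimum at the boundary point—which is by now standard and is precisely the form recorded in the maximum-principle reference already used above. The only care needed is the routine verification of its hypotheses (continuity up to $\partial\Omega$, strictness of the interior minimum, interior ball condition), and these are immediate here from the pointwise bounds and the antisymmetry of $u$ established earlier. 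An alternative, should one wish to avoid quoting Hopf's lemma, is to build an explicit $p$-subharmonic barrier vanishing on $\{x_n=0\}$ in a small half-ball or half-annulus about $z$ and compare it with $u$ from below, but this is strictly more laborious and yields nothing extra.
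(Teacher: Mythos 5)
Your proposal is correct and follows essentially the same route as the paper: normalize to $x_0=e_n$, $y_0=-e_n$, use the previous proposition to get $u>0$ in $\Pi_+$ with $u=0$ on $\partial\Pi_+$, and apply Hopf's boundary point lemma for $p$-harmonic functions on an interior ball (the paper uses the tangent ball $B_{1/2}(x+\tfrac12 e_n)$ directly, you phrase it as an interior ball condition for a half-ball, which is the same thing). The only differences are cosmetic: a different standard reference for the nonlinear Hopf lemma and the extra (harmless) remark that the tangential components of $Du$ vanish on the hyperplane.
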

\begin{proof}
Without loss of generality, we can focus on the case where $x_0=e_n$, $y_0=-e_n$, $u(e_n)=1$, and $u(-e_n)=-1$. In this case,  $\Pi_+=\{x\in \R^n: x_n>0\}$. 
By the previous proposition, $u>0$ in $\Pi_+$ and $u|_{\partial\Pi_+}=0$. Observe that for each $x\in \partial\Pi_+$, $u$ is $p$-harmonic in the ball
$$
D_x:=B_{\frac{1}{2}}\left(x+\frac{1}{2}e_n\right)
$$ 
and 
$$
u(x)=0<u(y),\quad y\in \overline{D_x}\setminus\{x\}.
$$
The latter observation follows from the fact that $D_x\subset \Pi_+$ and $\overline{D_x}\cap \partial\Pi_+=\{x\}$. By Hopf's boundary point lemma for $p$-harmonic functions (Lemma A.3 of \cite{MR951227}), 
$$
Du(x)\cdot (-e_n)<0. 
$$
\end{proof}

\section{Regularity}
In this section, we will consider the smoothness properties of extremals.  In particular, we will establish that extremals are smooth except for 
at the two points which maximize their H\"older seminorms. We also verify that their level sets bound convex regions in $\R^n$

\subsection{Nondifferentiability points}
It will be useful for us to recall some properties of $p$-harmonic functions in punctured domains. In particular,  the behavior of these functions 
near their singular points has been deduced by Kichenassamy and Veron \cite{KicVer,MR886428}. We will adapt their results to our setting as follows. 

\begin{lem}\label{singularPointLem}
Suppose $n\ge 2$ and $\Omega\subset\R^n$ is a bounded domain with $x_0\in \Omega$. Further suppose that $u\in {\cal  D}^{1,p}(\R^n)$ is $p$-harmonic in $\Omega\setminus\{x_0\}$ with $u(x)\le u(x_0)$ for $x\in\Omega$. Then
$$
\lim_{x\rightarrow x_0}\frac{u(x_0)-u(x)}{|x-x_0|^{\frac{p-n}{p-1}}}=\gamma
$$
for some $\gamma\ge 0$, and
$$
-\Delta_pu=n\omega_n\left(\frac{p-n}{p-1}\gamma\right)^{p-1}\delta_{x_0}
$$
in $\Omega$. 
\end{lem}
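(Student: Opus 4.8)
The plan is to reduce to a nonnegative $p$-harmonic function with an isolated singularity, read off its behaviour at that singularity from the results of Kichenassamy and V\'eron \cite{KicVer,MR886428}, and then extract the coefficient of the Dirac mass by an integration by parts on the exterior of a small ball. First I would replace $u$ by $h:=u(x_0)-u$. Since $-\Delta_p(c-w)=-(-\Delta_p w)$ for every function $w$ and constant $c$, the function $h$ is $p$-harmonic in $\Omega\setminus\{x_0\}$; moreover $h$ is continuous (as $u\in\mathcal{D}^{1,p}(\R^n)$ has a H\"older continuous representative), $h\ge 0$ throughout $\Omega$, and $h(x_0)=0$. Writing $\beta:=\frac{p-n}{p-1}$, which lies in $(0,1)$ because $1<n<p$, the first assertion is exactly that $h(x)/|x-x_0|^{\beta}\to\gamma$ for some $\gamma\ge 0$, and the second is that $-\Delta_p u=n\omega_n(\beta\gamma)^{p-1}\delta_{x_0}$ in $\Omega$.

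Next I would invoke the classification of isolated singularities of $p$-harmonic functions from \cite{KicVer,MR886428}. If the singularity of $h$ at $x_0$ is removable, then $h$ is $p$-harmonic in all of $\Omega$; being nonnegative with $h(x_0)=0$, the Harnack inequality forces $h\equiv 0$ near $x_0$, so $\gamma=0$, $u$ is $p$-harmonic throughout $\Omega$, and both conclusions hold trivially. Otherwise those results provide, for some $\gamma>0$, the sharp asymptotics
$$ h(x)=\gamma\,|x-x_0|^{\beta}+o(|x-x_0|^{\beta}),\qquad Dh(x)=\gamma\beta\,|x-x_0|^{\beta-2}(x-x_0)+o(|x-x_0|^{\beta-1}) $$
as $x\to x_0$. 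This establishes the limit, and I record that $Du(x)=-\gamma\beta|x-x_0|^{\beta-2}(x-x_0)+o(|x-x_0|^{\beta-1})$.

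It remains to identify $-\Delta_p u$ as a distribution. Since $Du\in L^p(\R^n)$, the field $|Du|^{p-2}Du$ is locally integrable, so $-\Delta_p u=-\text{div}(|Du|^{p-2}Du)$ is a well-defined distribution on $\Omega$ which is supported at $x_0$ (because $u$ is $p$-harmonic off $x_0$). Fix $\phi\in C_c^\infty(\Omega)$. For small $\varepsilon>0$, $u$ is $p$-harmonic and $C^1$ on $\Omega\setminus\{x_0\}$ by the regularity theory, so the divergence theorem yields
$$ \int_{\Omega\setminus B_\varepsilon(x_0)}|Du|^{p-2}Du\cdot D\phi\,dx=\int_{\partial B_\varepsilon(x_0)}\phi\,|Du|^{p-2}\,(Du\cdot\nu)\,dS, $$
where $\nu(x)=-(x-x_0)/\varepsilon$ is the outward unit normal of $\Omega\setminus B_\varepsilon(x_0)$ along $\partial B_\varepsilon(x_0)$ and the boundary term on $\partial\Omega$ vanishes since $\phi$ has compact support. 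Substituting the gradient asymptotics and using $(\beta-1)(p-1)=1-n$ gives $|Du|^{p-2}(Du\cdot\nu)=(\gamma\beta)^{p-1}\varepsilon^{1-n}(1+o(1))$ uniformly on $\partial B_\varepsilon(x_0)$ as $\varepsilon\to 0$; since $|\partial B_\varepsilon(x_0)|=n\omega_n\varepsilon^{n-1}$ and $\phi$ is continuous, the right-hand side tends to $n\omega_n(\gamma\beta)^{p-1}\phi(x_0)$, while the left-hand side tends to $\int_\Omega|Du|^{p-2}Du\cdot D\phi\,dx$ by dominated convergence ($|Du|^{p-1}|D\phi|\in L^1$). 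Hence $-\Delta_p u=n\omega_n\big(\tfrac{p-n}{p-1}\gamma\big)^{p-1}\delta_{x_0}$ in $\Omega$, as claimed.

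The main obstacle is the first-order asymptotics at $x_0$ used in the last step: it is not enough to control the size of $h$, one needs the matching expansion of $Dh$, which is the substantive input borrowed from \cite{KicVer,MR886428}. With only the cruder bound $|Du|=O(|x-x_0|^{(1-n)/(p-1)})$, the same surface-integral argument tested against functions vanishing at $x_0$ still shows $-\Delta_p u$ has no derivative-of-$\delta$ terms, but identifying the exact constant seems to require the sharp expansion.
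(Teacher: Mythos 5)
There is a genuine gap at the point where you invoke Kichenassamy--V\'eron. The dichotomy you quote --- a nonnegative $p$-harmonic function on a punctured domain either has a removable singularity or is asymptotic to $\gamma|x-x_0|^{\beta}$ with $\beta=\frac{p-n}{p-1}$ --- is false in the regime $p>n$ without an additional normalization: for example $h(x)=1-\tfrac12|x-x_0|^{\beta}$ is nonnegative and $p$-harmonic in a punctured neighborhood of $x_0$, its singularity is not removable (it carries a Dirac mass), and $h(x)/|x-x_0|^{\beta}\to\infty$. So the results of \cite{KicVer} cannot be applied to $h=u(x_0)-u$ as a black box; one must first place $h$ within their hypotheses. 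That is precisely the step the paper supplies and your proposal omits: it proves the pointwise bound $u(x_0)-u(x)\le L_r|x-x_0|^{\beta}$ in a small ball $B_r(x_0)$ by comparing $u(x_0)-u$ with the explicit $p$-harmonic function $L_r|x-x_0|^{\beta}$ on the punctured ball (Morrey's inequality gives the ordering on $\partial B_r(x_0)$, both functions vanish at $x_0$, and the maximum principle does the rest), and only then cites Theorem 1.1 and Remark 1.6 of \cite{KicVer}. Note that the bound you get for free from the H\"older seminorm, $u(x_0)-u(x)\le [u]_{C^{1-n/p}(\R^n)}|x-x_0|^{1-n/p}$, has the wrong exponent: $1-n/p<\beta$, so near $x_0$ it is strictly weaker and does not meet the required growth condition. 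You do have $h(x_0)=0$ and continuity of $h$ at your disposal, but you never use them when invoking the classification, and in any case the quantitative $|x-x_0|^{\beta}$ bound is the input the cited results need.

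Two smaller remarks. Once the hypotheses are verified, the cited theorem and remark already yield both the limit and the identity $-\Delta_pu=n\omega_n\bigl(\tfrac{p-n}{p-1}\gamma\bigr)^{p-1}\delta_{x_0}$, so your flux computation on $\partial B_\varepsilon(x_0)$ is redundant; it is also slightly informal as written, since $|Du|^{p-2}Du$ is only H\"older continuous on $\Omega\setminus\{x_0\}$ and the divergence-theorem step should be run through the weak formulation with cutoff functions (harmless, given the uniform gradient asymptotics, but it should be said). Your treatment of the removable case via the strong minimum principle is fine.
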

\begin{rem}
Here $\omega_n$ is the Lebesgue measure of the unit ball in $\R^n.$
\end{rem}
\begin{proof} 
Choose $r>0$ so small that $B_r(x_0)\subset \Omega$.  Note that $u(x_0)-u$ is $p$-harmonic in the punctured ball $B_r(x_0)\setminus\{x_0\}$. Moreover,
\begin{align*}
u(x_0)-u(x)&\le  [u]_{C^{1-n/p}(\R^n)}r^{1-\frac{n}{p}}\\
&=  [u]_{C^{1-n/p}(\R^n)}r^{1-\frac{n}{p}-\left(\frac{p-n}{p-1}\right)}r^{\frac{p-n}{p-1}}\\
&=  [u]_{C^{1-n/p}(\R^n)}r^{1-\frac{n}{p}-\left(\frac{p-n}{p-1}\right)}|x-x_0|^{\frac{p-n}{p-1}}\\
&=: L_r|x-x_0|^{\frac{p-n}{p-1}}
\end{align*}
for $x\in \partial  B_r(x_0)$. Since both $u(x_0)-u(x)$ and $L_r|x-x_0|^{\frac{p-n}{p-1}}$ are $p$-harmonic in $B_r(x_0)\setminus\{x_0\}$ and vanish at $x=x_0$, the maximum principle implies
$$
u(x_0)-u(x)\le L_r|x-x_0|^{\frac{p-n}{p-1}}
$$
for $x\in B_r(x_0)$. With this estimate, the assertion follows from Theorem 1.1 and Remark 1.6 of \cite{KicVer}.
\end{proof}
We have already noted that if $u$ is an extremal whose $1-n/p$ H\"older ratio attains its maximum at two distinct points $x_0,y_0\in \R^n$, then $u$ is continuously differentiable in $\R^n\setminus\{x_0,y_0\}$. It follows from the above lemma that a nonconstant extremal is not differentiable at its maximum and minimum points. 
\begin{cor}
Suppose $u\in {\cal  D}^{1,p}(\R^n)$ is a nonconstant extremal whose $1-n/p$ H\"older ratio attains its maximum at two distinct points $x_0,y_0\in \R^n$.  Then $u$ is not differentiable at 
 $x_0$ or $y_0$. 
\end{cor}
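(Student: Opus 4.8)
The plan is to combine the local asymptotics of $p$-harmonic functions near an isolated singularity (Lemma~\ref{singularPointLem}) with the distributional equation~\eqref{strongPDE} satisfied by $u$. We may assume $n\ge 2$; for $n=1$ the claim is immediate from the explicit form of the extremals in~\eqref{1Dextremalfun}, which have a corner at $x_0$ and at $y_0$. By replacing $u$ with $-u$ (an operation that preserves the class of extremals and fixes the pair $\{x_0,y_0\}$ at which the H\"older ratio is maximized), it suffices to prove that $u$ is not differentiable at whichever of $x_0,y_0$ is its maximum point. So assume $u(x_0)>u(y_0)$; by the pointwise bounds of the previous section, $u(x)\le u(x_0)$ on $\R^n$.

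First I would localize: choose $r>0$ with $B_r(x_0)\subset\R^n\setminus\{y_0\}$ and set $\Omega=B_r(x_0)$. Since $u$ is $p$-harmonic in $\Omega\setminus\{x_0\}$ and $u\le u(x_0)$ on $\Omega$, Lemma~\ref{singularPointLem} provides a $\gamma\ge 0$ such that
\[
\lim_{x\to x_0}\frac{u(x_0)-u(x)}{|x-x_0|^{\frac{p-n}{p-1}}}=\gamma
\qquad\text{and}\qquad
-\Delta_p u=n\omega_n\Big(\tfrac{p-n}{p-1}\gamma\Big)^{p-1}\delta_{x_0}\ \ \text{in }\Omega.
\]
On the other hand, testing the weak form~\eqref{weakPDE} of~\eqref{strongPDE} against functions supported in $\Omega$ (so that the term $\delta_{y_0}$ drops out) shows $-\Delta_p u=c\,\delta_{x_0}$ in $\Omega$ with $c=(u(x_0)-u(y_0))^{p-1}/(C_*^p|x_0-y_0|^{p-n})>0$. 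Comparing the two expressions forces $n\omega_n(\tfrac{p-n}{p-1}\gamma)^{p-1}=c>0$, hence $\gamma>0$.

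Finally, suppose for contradiction that $u$ were differentiable at $x_0$. Then $u(x_0)-u(x)=-Du(x_0)\cdot(x-x_0)+o(|x-x_0|)$ as $x\to x_0$, so dividing by $|x-x_0|^{(p-n)/(p-1)}$ and using that the exponent $\tfrac{p-n}{p-1}$ is strictly less than $1$ when $n\ge 2$ (so that $|x-x_0|=o(|x-x_0|^{(p-n)/(p-1)})$), the quotient tends to $0$. This gives $\gamma=0$, contradicting $\gamma>0$; thus $u$ is not differentiable at $x_0$, and applying this to $-u$ settles $y_0$ as well. I do not foresee a real obstacle here: the substantive input is entirely Lemma~\ref{singularPointLem}, and the only point needing care is ensuring that the localizing ball $\Omega$ avoids the second singular point, so that the restriction of~\eqref{strongPDE} to $\Omega$ is a single Dirac mass with the correct (positive) sign, which is precisely what makes $\gamma$ positive.
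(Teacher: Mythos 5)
Your argument is correct and follows essentially the same route as the paper: localize the equation \eqref{strongPDE} to a small ball around $x_0$ so that only the Dirac mass $\delta_{x_0}$ with positive coefficient survives, invoke Lemma \ref{singularPointLem} to conclude $\gamma>0$, and then note that differentiability at $x_0$ would force $\gamma=0$ since $\frac{p-n}{p-1}<1$ for $n\ge 2$; the case of $y_0$ (and the $n=1$ case via the explicit extremal) is handled just as in the paper. No gaps to report.
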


\begin{proof}
When $n=1$, we may conclude the nondifferentiability of $u$ at the points where its $1-1/p$ H\"older ratio attains its maximum by simply inspecting the graph of \eqref{1Dextremalfun}. 

\par Let's now consider the case where $n\ge 2$ and additionally suppose $u(x_0)>u(y_0)$.  As $u$ is an extremal, we have by Proposition \eqref{PDEprop} that for sufficiently small $r>0$
\be
C^p_*\int_{B_r(x_0)}|Du|^{p-2}Du\cdot D\phi dx=\frac{|u(x_0)-u(y_0)|^{p-2}(u(x_0)-u(y_0))}{|x_0-y_0|^{p-n}}\phi(x_0)
\ee
for each $\phi \in C^\infty_c(B_r(x_0))$. That is, 
$$
-\Delta_pu=\frac{|u(x_0)-u(y_0)|^{p-2}(u(x_0)-u(y_0))}{C^p_*|x_0-y_0|^{p-n}}\delta_{x_0}
$$
in $B_r(x_0)$. As $u(x)\le u(x_0)$ in $B_r(x_0)$, we can appeal to Lemma \ref{singularPointLem} to find
$$
n\omega_n\left(\frac{p-n}{p-1}\gamma\right)^{p-1}=\frac{|u(x_0)-u(y_0)|^{p-2}(u(x_0)-u(y_0))}{C^p_*|x_0-y_0|^{p-n}}>0.
$$
In particular, $\gamma>0$. 

\par If $u$ is differentiable at $x_0$,  
$$
u(x)-u(x_0)=Du(x_0)\cdot (x-x_0)+o(|x-x_0|)
$$
as $x\mapsto x_0$.  It would then follow that 
$$
\gamma=\lim_{x\rightarrow x_0}\frac{u(x_0)-u(x)}{|x-x_0|^{\frac{p-n}{p-1}}}\le \lim_{x\rightarrow x_0}\left((|Du(x_0)|+o(1))|x-x_0|^{1-\frac{p-n}{p-1}}\right)=0,
$$
which is a contradiction.   We can also argue similarly for $y_0$.
\end{proof}

\subsection{Quasiconcavity}
We will now investigate the quasiconcavity and quasiconvexity properties of extremal functions.   Recall that for a given convex $\Omega\subset \R^n$, a function $f:\Omega\rightarrow \R$ is quasiconcave if $\{x\in \Omega: f(x)\ge t\}$ is convex for each $t\in \R$. Likewise, $f$ is quasiconvex if $-f$ is quasiconcave.  We note that $f$ is quasiconcave if 
and only if
\be\label{quasiconcaveIneq}
f((1-\lambda)x+\lambda y)\ge \min\{f(x),f(y)\}
\ee
for $x,y\in \Omega$ and $\lambda\in [0,1]$. 
\begin{prop}\label{QuasiConProp}
Suppose that $u\in {\cal  D}^{1,p}(\R^n)$ is an extremal with 
$$
[u]_{C^{1-n/p}(\R^n)}=\frac{|u(x_0)-u(y_0)|}{|x_0-y_0|^{1-n/p}}
$$
and set 
$$
\Pi_\pm:=\left\{x\in \R^n: \pm \left(x-\frac{1}{2}(x_0+y_0)\right)\cdot(x_0-y_0)>0\right\}.
$$ 
If $u(x_0)>u(y_0)$, then
$$
\mbox{ $u|_{\Pi_+}$ is quasiconcave and  $u|_{\Pi_-}$ is quasiconvex};
$$
alternatively if $u(x_0)<u(y_0)$, then
$$
\mbox{ $u|_{\Pi_+}$ is quasiconvex and  $u|_{\Pi_-}$ is quasiconcave}.
$$
\end{prop}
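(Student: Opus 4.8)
The plan is to put $u$ in a normal form, reduce the statement to the convexity of a single family of superlevel sets, and obtain that convexity by comparing $u$ with its quasiconcave envelope. First I would normalize: replacing $u$ by $-u$ we may assume $u(x_0)>u(y_0)$, and using an invariance of Morrey's inequality we may take $x_0=e_n$, $y_0=-e_n$, $u(e_n)=1$, $u(-e_n)=-1$, so that $\Pi_+=\{x_n>0\}$. By the pointwise bounds proved above, $0<u<1$ in $\Pi_+$; by Proposition \ref{reflectionProp}, $u\equiv0$ on $\partial\Pi_+$; and since $u$ is bounded and $p$-harmonic near infinity, a standard asymptotic analysis together with the reflectional antisymmetry shows $u\to0$ at infinity. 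Hence for each $t\in(0,1)$ the set $\Omega_t:=\{u>t\}$ is a bounded open set with $\overline{\Omega_t}\subset\Pi_+$ and $e_n\in\Omega_t$. Since Proposition \ref{reflectionProp} converts every statement about $\Pi_-$ into one about $\Pi_+$, and since $\{u=0\}=\partial\Pi_+$, both quasiconcavity assertions — and then the claim that every level set bounds a convex region — follow once we show that $\Omega_t$ is convex for every $t\in(0,1)$.

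To prove this, for $x\in\Pi_+$ define the quasiconcave envelope
$$
v(x):=\sup\{t\in(0,1]:x\in\mathrm{conv}(\Omega_t)\},
$$
with the supremum of the empty set read as $0$, and extend $v$ antisymmetrically to $\R^n$. Elementary properties of convex hulls of the nested family $(\Omega_t)$ give $\{v>t\}=\mathrm{conv}(\Omega_t)$ for $t\in[0,1)$ (and $\overline{\{v>t\}}=\mathrm{conv}(\overline{\Omega_t})$), so $v$ is continuous and quasiconcave on $\Pi_+$; moreover $u\le v\le1$ on $\Pi_+$, $v(e_n)=1$, and $v\equiv0$ on $\partial\Pi_+$ because each $\mathrm{conv}(\Omega_t)$ stays a positive distance from $\{x_n=0\}$. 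Now $\Omega_t$ is convex for every $t\in(0,1)$ precisely when $v=u$ on $\Pi_+$, so it is enough to show $v\equiv u$.

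The crux is to verify that $v$ is a $p$-subsolution in $\Pi_+\setminus\{e_n\}$, i.e. $-\Delta_pv\le0$ there in the viscosity (equivalently weak) sense. Granting this, $v$ and the $p$-harmonic function $u$ carry the same data on $\partial(\Pi_+\setminus\{e_n\})$ — value $0$ on $\partial\Pi_+$, value $1$ at $e_n$, limit $0$ at infinity — so the comparison principle for the $p$-Laplacian gives $v\le u$ in $\Pi_+\setminus\{e_n\}$; together with $v\ge u$ this forces $v\equiv u$. (Alternatively, once one also knows $v\in\mathcal D^{1,p}(\R^n)$ and $v\le u$, one may extend $v$ antisymmetrically and invoke Theorem \ref{equivalentChar} and Corollary \ref{TrueUnique} via \eqref{strongPDE}.) To see that $v$ is a subsolution, suppose a smooth $\psi$ touches $v$ from above at $\bar x\in\Pi_+\setminus\{e_n\}$ and put $s:=v(\bar x)\in(0,1)$, so $\bar x\in\partial\,\mathrm{conv}(\Omega_s)$. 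If $\bar x\in\overline{\Omega_s}$, then $u(\bar x)=s=v(\bar x)$, so $\psi$ touches $u$ from above at $\bar x$ and $-\Delta_p\psi(\bar x)\le0$ because $u$ is $p$-harmonic there. If $\bar x\notin\overline{\Omega_s}$, a supporting hyperplane of $\mathrm{conv}(\Omega_s)$ at $\bar x$ contains a segment (in general a simplex) through $\bar x$ with endpoints in $\partial\Omega_s$ along which $v$ is affine and equal to $s$; a sliding argument then moves $\psi$ along this segment to touch $v$, hence $u$, from above at an endpoint $a\in\partial\Omega_s\subset\Pi_+\setminus\{e_n\}$ with $u(a)=s$, again giving $-\Delta_p\psi(\bar x)\le0$. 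So $v$ is a subsolution.

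This subsolution step is the main obstacle; it is the half-space, isolated-singularity analogue of Lewis's theorem that $p$-capacitary functions of convex rings have convex level sets. The delicate points are the local geometry of $\mathrm{conv}(\Omega_s)$ at a boundary point lying outside $\Omega_s$ — which is what licenses sliding the test function along a segment of the convex-hull boundary — and checking that the isolated singularity of $u$ at $e_n$ plays no role, which it does not since $e_n\in\mathrm{int}\,\Omega_s$ for every $s\in(0,1)$ and all the analysis happens on $\partial\,\mathrm{conv}(\Omega_s)\subset\Pi_+\setminus\{e_n\}$.
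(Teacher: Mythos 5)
Your reduction (normalize, use Proposition \ref{reflectionProp} to pass to $\Pi_+$, and show each superlevel set $\Omega_t=\{u>t\}$ is convex) matches the paper's starting point, but the two pillars your argument rests on are not available. First, you assert that ``a standard asymptotic analysis'' gives $u\to 0$ at infinity. Nothing of the sort is proved in this paper, and it is not routine: membership in ${\cal D}^{1,p}(\R^n)$ alone does not force a limit at infinity when $p>n$ (functions like $|x|^{\beta}$ with $0<\beta<1-n/p$ have $L^p$ gradients and grow), and in one dimension the extremal actually tends to $1$, not $0$, along the positive axis, so any proof must use $n\ge 2$ in an essential way; establishing the limit at infinity of Morrey extremals is a genuinely nontrivial matter beyond the scope of this paper. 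Yet your whole construction leans on it: without decay you cannot conclude that $\Omega_t$ is bounded, that $\mathrm{conv}(\Omega_t)$ stays a positive distance from $\{x_n=0\}$ (so that $v=0$ on $\partial\Pi_+$), or that the comparison between $v$ and $u$ on the unbounded domain $\Pi_+\setminus\{e_n\}$ can be closed at infinity.

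Second, the crux step --- that the quasiconcave envelope $v$ is a viscosity subsolution of $-\Delta_p v\le 0$ --- is not justified by the sliding argument you sketch. If $\psi$ touches $v$ from above at $\bar x\in\partial\,\mathrm{conv}(\Omega_s)\setminus\overline{\Omega_s}$, translating $\psi$ to an endpoint $a\in\partial\Omega_s$ of a segment in the supporting hyperplane does not produce a function touching $u$ (or even $v$) from above at $a$: from $\psi\ge v\ge u$ near $\bar x$ you cannot deduce $\psi(\cdot-(a-\bar x))\ge u$ near $a$. The correct envelope argument (Alvarez--Lasry--Lions, Colesanti--Salani) writes $\bar x$ as a convex combination of points where $u\ge s$, tests the equation simultaneously at all of them, and needs structural hypotheses on the operator together with boundary/infinity asymptotics; for the $p$-Laplacian this is a substantial theorem in its own right, essentially on par with Lewis's result, not a two-line reduction. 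The paper avoids both difficulties by a different route: it exhausts $\Pi_+$ by balls $D_r$ tangent to the hyperplane, solves the capacitary problem \eqref{vrEqn} on the bounded convex rings $D_r\setminus\{e_n\}$, invokes Lewis's theorem \cite{MR0477094} to get quasiconcavity of each $u_r$, and then proves $u_r\to u$ pointwise via energy monotonicity, weak compactness, identification of the limiting equation, and the uniqueness results already established --- so no decay at infinity and no new convexity theorem are needed. As written, your proposal has real gaps at both of these points.
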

\begin{figure}[h]
\centering
 \includegraphics[width=1\columnwidth]{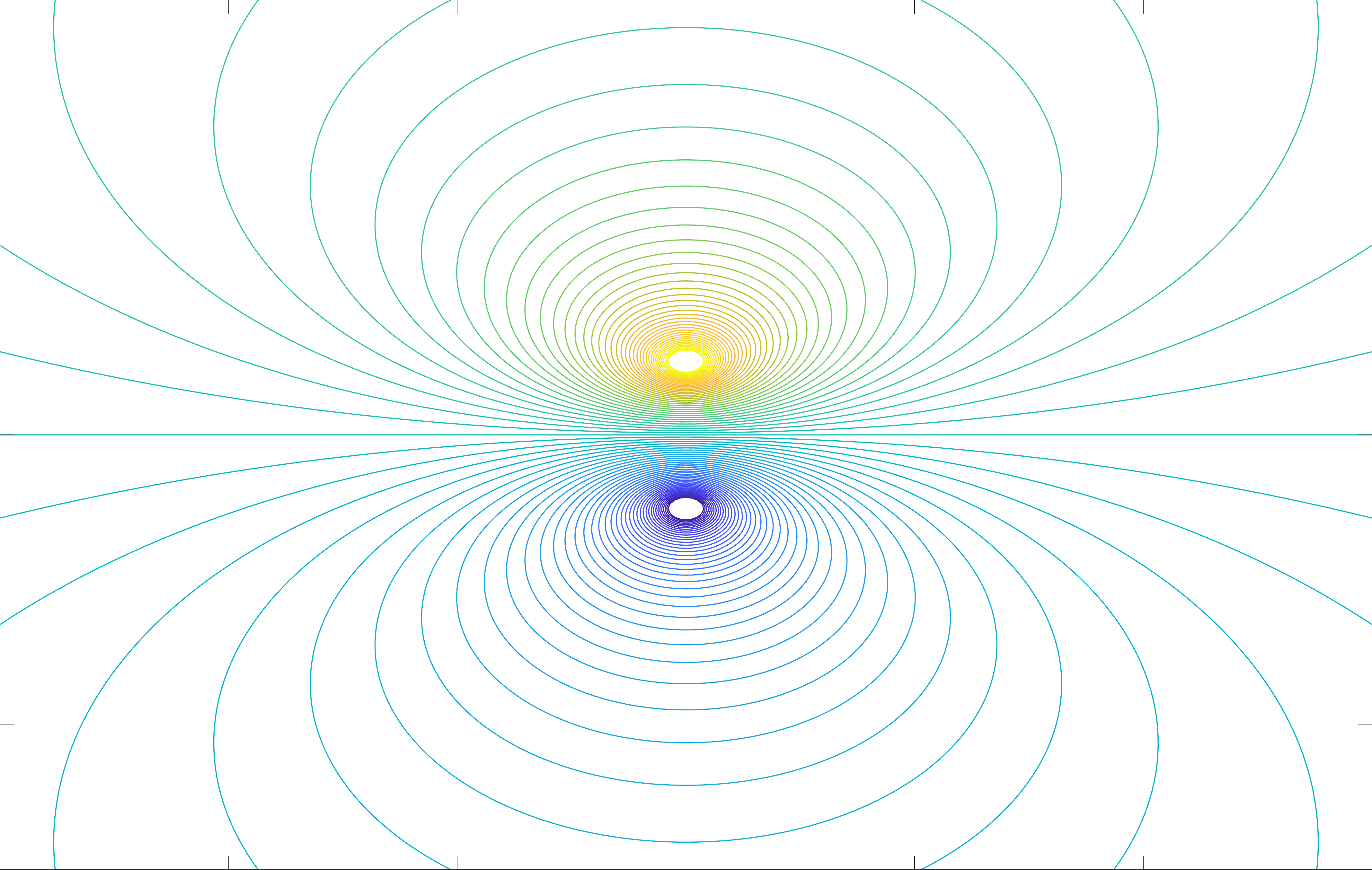}
 \caption{Various contours of a numerical approximation for the extremal $u:\R^2\rightarrow\R$ of Morrey's inequality which satisfies  $u(0,1)=1$ and $u(0,-1)=-1$.  Here  $p=4$.}\label{ContourExt}
\end{figure}
\begin{proof}
It suffices to verify this assertion for the extremal function $u\in {\cal  D}^{1,p}(\R^n)$ which satisfies $u(e_n)=1$ and $u(-e_n)=-1$. In this case, 
$\Pi_\pm=\{x\in \R^n: \pm x_n>0\}$.  By Proposition \ref{reflectionProp} and the fact that the reflection of a convex set about the $x_n=0$ hyperplane is still convex, we only need to verify that $u|_{\Pi_+}$ is quasiconcave. 

\par 1. Recall that $u$ is $p$-harmonic in $\Pi_+\setminus\{e_n\}$, $u(e_n)=1$ and $u|_{\partial \Pi_+}=0$.  We will use this information to build an approximation scheme of quasiconcave functions.  For each $r>0$, set 
$$
D_r:=B_r(\sqrt{1+r^2}e_n)=\{x\in\R^n: x_1^2+\dots + x_{n-1}^2+(x_n-\sqrt{1+r^2})^2<r^2\}.
$$
It's plain that $e_n\in D_r$ and we leave it as an exercise to show that $\overline{D_r}\subset D_s$ for $r<s$ and that $\Pi_+=\bigcup_{r>0}D_r$.  Now let $u_r\in W^{1,p}_0(D_r)$ be the unique solution of the boundary value problem 
\be\label{vrEqn}
\begin{cases}
-\Delta_p v=0&\text{in}\; D_r\\
\hspace{.05in}v(e_n)=1&\\
\hspace{.33in}v=0&\text{on}\; \partial D_r.
\end{cases}
\ee
This function $u_r$ can be found by minimizing the integral 
$$
\int_{D_r}|Dv|^pdx
$$
among $v\in W^{1,p}_0(D_r)$ which satisfy $v(e_n)=1$.  In view of \eqref{vrEqn} and Lemma \ref{singularPointLem}, we also have
\be\label{vrEquation}
-\Delta_pu_r=a_r\delta_{e_n}
\ee
in $D_r$ for some positive constant $a_r$. Multiplying this equation by $u_r$ and integrating by parts shows that in fact $a_r=\int_{D_r}|Du_r|^pdx$.
\par 2. Extending $u_r$ by $0$ to $\Pi_+\setminus D_r$, we have $u_r\in W^{1,p}_0(\Pi_+)$. By the results of Lewis on capacitary functions in convex rings \cite{MR0477094}, $u_r: \Pi_+\rightarrow [0,1]$ is quasiconcave.   In order to complete this proof, we only need to prove that 
\be\label{urconvergenceClaim}
u_r(x)\rightarrow u(x)\quad ( x\in \R^n)
\ee
as $r\rightarrow\infty$. If this is the case, we would have by \eqref{quasiconcaveIneq} that 
$$
u((1-\lambda)x+\lambda y)=\lim_{r\rightarrow\infty}u_r((1-\lambda)x+\lambda y)\ge
\lim_{r\rightarrow\infty}\min\{u_r(x),u_r(y)\}=\min\{u(x),u(y)\}
$$
for each $x,y\in \Pi_+$ and $\lambda\in [0,1]$. So now we focus on proving \eqref{urconvergenceClaim}. 

\par Recall $\overline{D_{r_1}}\subset D_{r_2}$ when $r_1<r_2$, so that $u_{r_1}\in W^{1,p}_0(D_{r_2})$. Moreover,
$$
\int_{\Pi_+}|Du_{r_2}|^pdx=\int_{D_{r_2}}|Du_{r_2}|^pdx
\le \int_{D_{r_2}}|Du_{r_1}|^pdx=\int_{\Pi_+}|Du_{r_1}|^pdx.
$$
Therefore, $(u_r)_{r\ge 1}\subset {\cal  D}^{1,p}(\R^n)$ is bounded in the sense that $u_r(e_n)=1$ and 
$$
\int_{\Pi_+}|Du_{r}|^pdx\le \int_{\Pi_+}|Du_{1}|^pdx
$$
for each $r\ge 1$.  Consequently, there is $u_\infty\in W^{1,p}_0(\Pi_+)$ and a sequence of positive numbers $r_j$ increasing to $\infty$ such that 
$u_{r_j}\rightarrow u_\infty$ locally uniformly in $\Pi_+$ and $Du_{r_j}\rightharpoonup Du_\infty$ in $L^p(\Pi_+;\R^n)$.  In order to verify 
\eqref{urconvergenceClaim}, we only need to show $u_\infty\equiv u|_{\Pi_+}$. 

\par 3.  To this end, we note that since $u_r(e_n)=1$ and $u_r|_{\partial \Pi_+}=0$ for all $r$,  $u_\infty(e_n)=1$ and $u_\infty|_{\partial \Pi_+}=0$. 
And in view of \eqref{vrEquation},
\begin{align*}
 0&\le \int_{D_{r_j}}(|Du_{r_j}|^{p-2}Du_{r_j}-|Dw|^{p-2}Dw)\cdot D(u_{r_j}-w)dx\\
 &=a_{r_j}(u_{r_j}-w)(e_n) -\int_{D_{r_j}}|Dw|^{p-2}Dw\cdot D(u_{r_j}-w)dx\\
 &=a_{r_j}(u_{r_j}-w)(e_n) -\int_{\Pi_+}|Dw|^{p-2}Dw\cdot D(u_{r_j}-w)dx
\end{align*} 
for each fixed $w\in C^\infty_c(\Pi_+)$ and $j\in \N$ sufficiently large.  Sending $j\rightarrow\infty$ gives
\begin{align*}
0&\le a(u_{\infty}-w)(e_n) -\int_{\Pi_+}|Dw|^{p-2}Dw\cdot D(u_{\infty}-w)dx.
\end{align*} 
By approximation, this inequality holds for all $w\in W^{1,p}_0(\Pi_+)$.  

\par 4. For $\lambda>0$, we choose $w=u_{\infty}-\lambda\phi$ for $\phi\in W^{1,p}_0(\Pi_+)$ and note 
$$
0\le a\phi(e_n) -\int_{\Pi_+}|Du_{\infty}-\lambda D\phi|^{p-2}D(u_{\infty}-\lambda \phi)\cdot D\phi dx.
$$
Sending $\lambda\rightarrow 0^+$ gives
$$
0\le a\phi(e_n) -\int_{\Pi_+}|Du_{\infty}|^{p-2}Du_{\infty}\cdot D\phi dx.
$$
Replacing $\phi$ with $-\phi$ gives
$$
\int_{\Pi_+}|Du_{\infty}|^{p-2}Du_{\infty}\cdot D\phi dx=a\phi(e_n) 
$$
for all $\phi\in W^{1,p}_0(\Pi_+)$. In particular, $a=\int_{\Pi_+}|Du_{\infty}|^{p}dx$ so $Du_{r_j}\rightarrow Du_\infty$ in $L^p(\Pi_+;\R^n)$
and $u_\infty$ is $p$-harmonic in $\Pi_+\setminus\{e_n\}$. Since $u_\infty(e_n)=1$ and $u_\infty|_{\Pi_+}=0$, it must be that $u_\infty\equiv u|_{\Pi_+}$. 
\end{proof}
\begin{rem}
It also follows from the proof above that $u_r\rightarrow u$ locally uniformly on $\Pi_+$ and $Du_r\rightarrow Du$ in $L^p(\Pi_+;\R^n)$. 
\end{rem}

\subsection{Analyticity}
It follows from a theorem of Hopf \cite{MR1545250} that a $p$-harmonic function is locally analytic in a neighborhood of any point for which its gradient does not vanish.  Therefore, the analyticity of extremal functions is an immediate corollary of the following assertion. 
\begin{prop}\label{DuNotZeroProp}
Suppose $n\ge 2$ and $u\in {\cal  D}^{1,p}(\R^n)$ is a nonconstant extremal with 
$$
[u]_{C^{1-n/p}(\R^n)}=\frac{|u(x_0)-u(y_0)|}{|x_0-y_0|^{1-n/p}}.
$$
Then  
$$
|Du|>0\quad \text{in}\quad \R^n\setminus\{x_0,y_0\}.
$$
\end{prop}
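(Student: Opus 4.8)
The plan is to exploit the convexity of the superlevel sets of $u$ established in Proposition~\ref{QuasiConProp} together with Hopf's boundary point lemma for $p$-harmonic functions. First, using the translation, rotation, scaling, additive and multiplicative invariances of the problem (and replacing $u$ by $-u$ if necessary), we may assume $x_0=e_n$, $y_0=-e_n$, $u(e_n)=1$ and $u(-e_n)=-1$; then $\Pi_\pm=\{x:\pm x_n>0\}$, we have $u|_{\partial\Pi_+}=0$ and $0<u<1$ on $\Pi_+$ by \eqref{ubetweenzeroandone}, and $u$ is $p$-harmonic---hence locally $C^{1,\alpha}$---on $\R^n\setminus\{e_n,-e_n\}$. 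By Proposition~\ref{reflectionProp}, $u$ is odd under the orthogonal reflection $(x',x_n)\mapsto(x',-x_n)$, so $|Du|$ is invariant under this reflection; hence it suffices to prove $|Du|>0$ on $\overline{\Pi_+}\setminus\{e_n\}$. On $\partial\Pi_+$ we already know $Du\cdot(x_0-y_0)>0$, so $|Du|>0$ there, and the task reduces to showing $Du\ne 0$ throughout $\Pi_+\setminus\{e_n\}$.

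The key observation is that $Du$ cannot vanish at a point where $u$ is strictly larger than at nearby points in some direction. Precisely: if $z'\in\Pi_+\setminus\{e_n\}$ has $u(z')=t'\in(0,1)$ and every neighborhood of $z'$ meets $\{u<t'\}$, then $Du(z')\ne0$. Indeed, the convex set $K':=\{x\in\Pi_+:u(x)\geq t'\}$ (convex by Proposition~\ref{QuasiConProp}) has nonempty interior since it contains a neighborhood of $e_n$, while $z'\in K'$ is not an interior point of $K'$; hence there is a supporting hyperplane to $K'$ at $z'$ with outward unit normal $e$, and $u<t'$ on $\{y:(y-z')\cdot e>0\}\cap\Pi_+$. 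For $r>0$ small, the closed ball $\overline{B_r(z'+re)}$ lies in $\Pi_+\setminus\{e_n\}$ and touches the hyperplane only at $z'$, so $u<t'=u(z')$ on $\overline{B_r(z'+re)}\setminus\{z'\}$; Hopf's boundary point lemma for $p$-harmonic functions (Lemma A.3 of \cite{MR951227}, applied as in the proof of the directional-derivative estimate above) then yields $Du(z')\cdot(-e)>0$, so $Du(z')\ne0$.

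With this in hand, suppose for contradiction that $Du(z)=0$ for some $z\in\Pi_+\setminus\{e_n\}$, and put $t_0:=u(z)\in(0,1)$. If every neighborhood of $z$ meets $\{u<t_0\}$, the observation of the previous paragraph already contradicts $Du(z)=0$. Otherwise $u\geq t_0$ near $z$, so $z$ is a local minimum of $u$, and the strong maximum principle (Chapter~2 of \cite{MR2242021}, applied to $-u$) forces $u\equiv t_0$ on a ball about $z$; thus $z$ lies in the nonempty open set $W:=\operatorname{int}\{u=t_0\}$. Since $\Pi_+\setminus\{e_n\}$ is connected (here $n\geq2$), either $W=\Pi_+\setminus\{e_n\}$---impossible, as then $u\equiv t_0$ on $\overline{\Pi_+}$ while $u|_{\partial\Pi_+}=0$ would force $t_0=0$---or there is $z_*\in\partial W\cap(\Pi_+\setminus\{e_n\})$. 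At such $z_*$: $u(z_*)=t_0$ by continuity; $Du(z_*)=0$, since $Du$ is continuous off $\{e_n,-e_n\}$ and vanishes on $W$; and every neighborhood of $z_*$ meets $\{u\ne t_0\}$. If some such point lies in $\{u<t_0\}$, the observation again contradicts $Du(z_*)=0$; otherwise $u\geq t_0$ near $z_*$, so $z_*$ is a local minimum, the strong maximum principle forces $u\equiv t_0$ near $z_*$, and hence $z_*\in W$, contradicting $z_*\in\partial W$. All cases being impossible, $Du\ne0$ on $\Pi_+\setminus\{e_n\}$.

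I expect the main obstacle to be the degenerate possibility that $Du$ vanishes on a set with nonempty interior, i.e.\ a unique-continuation statement for $p$-harmonic functions: since a $p$-harmonic function with $p\ne2$ need not be real analytic near a critical point, one cannot invoke analytic continuation and must instead rule this out ``by hand'' via the strong maximum/minimum principle and the connectedness of $\Pi_+\setminus\{e_n\}$, as above. The remaining ingredients---the normalization through the invariances, the elementary geometry of a ball tangent to a supporting hyperplane, and the identification of $\{x\in\Pi_+:u(x)\geq t\}$ with the superlevel set whose convexity Proposition~\ref{QuasiConProp} supplies---are routine.
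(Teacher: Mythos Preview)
Your argument is correct and takes a genuinely different route from the paper. The paper proves \eqref{HarnackExtremal}---a Harnack inequality for $|Du|$ on every ball compactly contained in $\Pi_+\setminus\{e_n\}$---by invoking Lewis's gradient Harnack inequality for capacitary functions on convex rings and passing it through the approximation $u_r\to u$ built in the proof of Proposition~\ref{QuasiConProp}; the nonvanishing of $Du$ then follows because a single zero would force $Du\equiv0$ on a ball, hence $u$ constant there, contradicting \eqref{ubetweenzeroandone}. You instead use the \emph{output} of Proposition~\ref{QuasiConProp} (convexity of the superlevel sets) directly: a supporting hyperplane at any boundary point of $\{u\ge t\}$ furnishes an interior tangent ball on which $u<t$, and Hopf's lemma gives $Du\neq0$ there; the remaining ``plateau'' case is disposed of by the strong minimum principle and the connectedness of $\Pi_+\setminus\{e_n\}$. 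Your approach is more self-contained---it avoids both the approximation scheme and Lewis's Harnack result, relying only on tools (Hopf, strong maximum principle, quasiconcavity) already established or invoked in the paper---whereas the paper's approach yields the stronger quantitative statement \eqref{HarnackExtremal} as a byproduct. One small wording issue: in your final dichotomy at $z_*$, ``if some such point lies in $\{u<t_0\}$'' should read ``if every neighborhood of $z_*$ meets $\{u<t_0\}$'' so that the key observation applies; the complementary case is then precisely that some neighborhood has $u\ge t_0$, and your argument goes through as written.
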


\begin{proof}
We will assume without loss of generality that $x_0=e_n, y_0=-e_n$ and $u(e_n)=1, u(-e_n)=-1$.  By the antisymmetry of $u$ across $\{x\in \R^n:x\cdot e_n=0\}$ and the fact that $u_{x_n}>0$ on this hyperplane by \eqref{NormalDerivativeu}, it suffices to show that $|Du|>0$ on $\Pi_{+}:=\{x\in \R^n:  x\cdot e_n>0\}$. To this end, we will employ the family of functions $(u_r)_{r>0}$ defined in the proof of Proposition \ref{QuasiConProp}. 

\par In view of \eqref{vrEqn}, $u_r$ is a capacitary function on the convex 
ring $D_r\setminus\{e_n\}$ for each $r>0$.  Lewis showed that for each ball $B$ with $\overline{B}\subset D_r\setminus\{e_n\}$, there is a constant $\rho=\rho(B)\ge 1$ for which
$$
\max_{\overline{B}}|Du_r|\le \rho \min_{\overline{B}}|Du_r|
$$
(section 5 of \cite{MR0477094}). 
 In particular, 
$$
|Du_r(y)|\le \rho |Du_r(z)|
$$
for every $y,z\in \overline{B}$.  

\par Since $Du_{r} \rightarrow Du$ in $L^p(B)$, it must be that $Du_{r_j}(x) \rightarrow Du(x)$ for almost every $x\in  B$ for an appropriate sequence $(r_j)_{j\in \N}$ tending to $0$. Therefore, 
$$
|Du(y)|\le \rho |Du(z)|
$$
for Lebesgue almost every $y,z\in B$. Since $u$ is $p$-harmonic in $B$, $u$ is continuously differentiable in $B$ and this inequality  holds for every $y,z\in B$. That is, 
\be\label{HarnackExtremal}
\max_{\overline{B}}|Du|\le \rho \min_{\overline{B}}|Du|.
\ee

\par   Suppose there is some $y\in \Pi_+$ such that $Du(y)=0.$  Then for every $\delta>0$
for which $B_\delta(y)\subset\Pi_+\setminus\{e_n\}$, $|Du||_{B_\delta(y)}=0$.  This follows from inequality \eqref{HarnackExtremal}.  
Choosing 
$$
\delta:=\text{dist}(y,\partial(\Pi_+\setminus\{e_n\})),
$$
we have that either $e_n\in \partial B_\delta(y)$ or $\partial B_\delta(y)\cap \partial \Pi_+\neq \emptyset$. 
In the case $e_n\in \partial B_\delta(y)$, $u\equiv 1$ in $B_\delta(y)$; otherwise $u\equiv 0$ 
in  $B_\delta(y)$. Either conclusion is a contradiction to Proposition \ref{ubetweenzeroandone}. As a result, no such point $y$ exists.  
\end{proof}

\section{Morrey's estimate and the sharp constant $C_*$}
We will now argue that the sharp constant $C_*$ for Morrey's inequality \eqref{MorreyIneq} cannot be derived the way other constants for Morrey's inequality typically are.  To this end, we will recall Morrey's estimate: there is a constant $C>0$ depending only on $p$ and $n$ such that 
\be\label{babyMorreyIneq}
|u(x)-u(y)|\le C r^{1-n/p}\left(\int_{B_r(z_0)}|Du|^pdz\right)^{1/p}
\ee
for each $x,y\in B_r(z_0)$ and $u\in {\cal  D}^{1,p}(\R^n)$. This estimate was verified by Evans and Gariepy (Theorem 4.10 of \cite{MR3409135}) but is essentially due to Morrey (Lemma 1 of \cite{MR1501936}, Theorem 3.5.2 of \cite{MR2492985}).   We also note that other good presentations of versions of this inequality can be found in Nirenberg's Lecture II of \cite{MR0109940}, section 5.6.2 of Evans' textbook \cite{MR2597943} and in section 7.8 of Gilbarg and Trudinger's monograph \cite{MR1814364}.

\par It is not hard to see that Morrey's inequality holds with the constant $C$ in \eqref{babyMorreyIneq}.  Indeed, we can choose $z_0=x$ and $r=|x-y|$ to find 
\be\label{uLocalHolder}
\frac{|u(x)-u(y)|}{|x-y|^{1-n/p}}\le C\left(\int_{B_r(x)}|Du|^pdz\right)^{1/p}\le C\left(\int_{\R^n}|Du|^pdz\right)^{1/p}
\ee
for each $u\in {\cal  D}^{1,p}(\R^n)$.  However, we claim that any such $C$ must be larger than $C_*$. In particular, it is not possible to find the best constant for 
Morrey's inequality by deriving Morrey's estimate.

\begin{prop}
Suppose $n\ge 2$, $C$ is a constant for which \eqref{babyMorreyIneq} holds, and $C_*$ is the sharp constant for Morrey's inequality. Then 
$$
C_*<C.
$$
\end{prop}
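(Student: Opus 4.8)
The plan is to argue by contradiction. Suppose $C = C_*$ is a constant for which Morrey's estimate \eqref{babyMorreyIneq} holds. Take a nonconstant extremal $u$ whose $1-n/p$ H\"older ratio is maximized at distinct points $x_0, y_0$; such an extremal exists by Lemma \ref{ExistenceLem} and Corollary \ref{ExistenceCor}. Then by definition of being an extremal,
\[
\frac{|u(x_0)-u(y_0)|}{|x_0-y_0|^{1-n/p}} = C_*\left(\int_{\R^n}|Du|^p dz\right)^{1/p}.
\]
On the other hand, applying \eqref{babyMorreyIneq} with $z_0 = x_0$ and $r = |x_0-y_0|$ (so that both $x_0, y_0 \in \overline{B_r(x_0)}$, after passing to the closed ball by continuity, or by taking $r$ slightly larger and letting it shrink) gives
\[
\frac{|u(x_0)-u(y_0)|}{|x_0-y_0|^{1-n/p}} \le C_*\left(\int_{B_r(x_0)}|Du|^p dz\right)^{1/p}.
\]
Combining the two displays forces
\[
\int_{\R^n \setminus B_r(x_0)} |Du|^p dz = 0,
\]
i.e.\ $Du \equiv 0$ on $\R^n \setminus \overline{B_r(x_0)}$. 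The strategy now is to show this contradicts the nonvanishing-gradient result (Proposition \ref{DuNotZeroProp}): since $u$ is nonconstant with $n \ge 2$, $|Du| > 0$ on $\R^n \setminus \{x_0, y_0\}$, so in particular $Du$ cannot vanish on the nonempty open set $\R^n \setminus \overline{B_r(x_0)}$ (this set is nonempty, and it omits $y_0$ as long as we do not need $y_0$ — but $y_0 \in \overline{B_r(x_0)}$ anyway, so the open exterior contains only points where $|Du|>0$). This contradiction shows $C \ne C_*$, and since $C \ge C_*$ always holds by definition of the sharp constant (as \eqref{uLocalHolder} already exhibits $C$ as an admissible Morrey constant), we conclude $C_* < C$.

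A technical point to handle carefully is the open-versus-closed ball issue in \eqref{babyMorreyIneq}: the estimate is stated for $x, y \in B_r(z_0)$, the open ball, whereas I want to take $r = |x_0 - y_0|$ so that $y_0 \in \partial B_r(x_0)$. The clean fix is to apply \eqref{babyMorreyIneq} with radius $r' = |x_0-y_0| + \varepsilon$ for each $\varepsilon > 0$, obtaining
\[
\frac{|u(x_0)-u(y_0)|}{|x_0-y_0|^{1-n/p}} \le C_* \left(\frac{|x_0-y_0|+\varepsilon}{|x_0-y_0|}\right)^{1-n/p}\left(\int_{B_{r'}(x_0)}|Du|^p dz\right)^{1/p},
\]
and then let $\varepsilon \to 0^+$ using dominated convergence (or monotone convergence) for the integral and continuity of the prefactor; the result is the closed-ball estimate above with $r = |x_0-y_0|$.

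The main obstacle is really just the bookkeeping described above plus making sure the exterior region $\R^n \setminus \overline{B_{|x_0-y_0|}(x_0)}$ is genuinely nonempty and disjoint from $\{x_0, y_0\}$ — both are immediate since $x_0, y_0$ lie in the closed ball and $\R^n$ is unbounded — so that Proposition \ref{DuNotZeroProp} applies and delivers the contradiction. There is no deep new idea needed; the proposition is essentially a corollary of the nonvanishing-gradient theorem combined with the fact that an extremal saturates Morrey's inequality globally while Morrey's estimate only ever uses the gradient on a bounded ball.
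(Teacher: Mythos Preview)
Your argument is correct and follows essentially the same route as the paper: both use a nonconstant extremal, compare the global equality $[u]_{C^{1-n/p}(\R^n)}=C_*\|Du\|_{L^p(\R^n)}$ against the ball estimate \eqref{babyMorreyIneq} applied at the pair $(x_0,y_0)$, and invoke Proposition~\ref{DuNotZeroProp} to see that $|Du|$ has positive mass outside the ball. The paper phrases this as a direct strict inequality rather than a contradiction, and it silently uses the closed-ball version of \eqref{babyMorreyIneq} that you took care to justify via the limit $\varepsilon\to 0^+$; otherwise the proofs are the same.
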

\begin{proof}
Let $u\in{\cal  D}^{1,p}(\R^n)$ be an nonconstant extremal whose H\"older ratio is maximized at $0$ and $e_n$.  
In view of \eqref{uLocalHolder},
\be
C_*\left(\int_{\R^n}|Du|^pdz\right)^{1/p}=\frac{|u(e_n)-u(0)|}{|e_n-0|^{1-n/p}}\le C \left(\int_{B_1(0)}|Du|^pdz\right)^{1/p}.
\ee
By Proposition \ref{DuNotZeroProp}, $|Du|>0$ in $\R^n\setminus B_1(0)$. Therefore, 
$$
C_*\left(\int_{\R^n}|Du|^pdz\right)^{1/p}<C\left(\int_{\R^n}|Du|^pdz\right)^{1/p}.
$$
\end{proof}

\section{Maximizing the $1-n/p$ H\"older ratio}
We will finally argue that the H\"older ratio of any function belonging to ${\cal  D}^{1,p}(\R^n)$ always attains its maximum at a pair of distinct points.   In particular, every extremal also has this property.  We will also show how the following theorem implies a type of stability for Morrey's inequality. 
\begin{thm}\label{HolderRatioProp}
Assume $u\in {\cal  D}^{1,p}(\R^n)$ is nonconstant. Then 
$$
[u]_{C^{1-n/p}(\R^n)}=\frac{|u(x_0)-u(y_0)|}{|x_0-y_0|^{1-n/p}}
$$
for some $x_0,y_0\in \R^n$ with $x_0\neq y_0$.  
\end{thm}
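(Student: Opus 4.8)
The plan is to show that along any maximizing sequence for the ratio $\frac{|u(x)-u(y)|}{|x-y|^{1-n/p}}$, either the pair of points stays in a bounded region (so that compactness gives a maximizing pair), or one of the degenerations forces the ratio down to a value strictly below $[u]_{C^{1-n/p}(\R^n)}$, yielding a contradiction. Concretely, let $S:=[u]_{C^{1-n/p}(\R^n)}>0$ and pick $x_k\neq y_k$ with $\frac{|u(x_k)-u(y_k)|}{|x_k-y_k|^{1-n/p}}\to S$. Write $r_k:=|x_k-y_k|$ and $m_k:=\frac12(x_k+y_k)$. There are three regimes to rule out or exploit: (a) $r_k\to 0$; (b) $r_k\to\infty$; (c) $r_k$ bounded away from $0$ and $\infty$ but $|m_k|\to\infty$. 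If none of these happens along a subsequence, then $x_k,y_k$ stay in a compact set, a subsequence converges to some $x_0,y_0$, and continuity of $u$ together with $r_k$ bounded below forces $x_0\neq y_0$ and $\frac{|u(x_0)-u(y_0)|}{|x_0-y_0|^{1-n/p}}=S$, as desired.

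The heart of the argument is ruling out (a)--(c) using the hypothesis $Du\in L^p(\R^n)$, i.e. a quantitative local version of Morrey's inequality. For regime (a), apply Morrey's estimate \eqref{babyMorreyIneq} on the ball $B_{r_k}(x_k)$: then $\frac{|u(x_k)-u(y_k)|}{r_k^{1-n/p}}\le C\left(\int_{B_{r_k}(x_k)}|Du|^p\,dz\right)^{1/p}$, and since $r_k\to 0$ the right side tends to $0$ by absolute continuity of the integral; this contradicts convergence to $S>0$. For regime (b), the same estimate on $B_{r_k}(x_k)$ gives $\frac{|u(x_k)-u(y_k)|}{r_k^{1-n/p}}\le C\left(\int_{\R^n}|Du|^p\,dz\right)^{1/p}$, which is finite but only shows boundedness; to get a strict improvement one instead uses that for large $r_k$ the full $L^p$ norm of $Du$ is concentrated on an arbitrarily small fraction of such a large ball in a way that can be quantified — more cleanly, one covers the segment from $x_k$ to $y_k$ by a chain of balls of fixed radius and sums, the point being that the energy available, $\int_{\R^n}|Du|^p$, is fixed while the length $r_k\to\infty$ forces the local contributions to vanish; rescaling shows $\frac{|u(x_k)-u(y_k)|}{r_k^{1-n/p}}\to 0$. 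Regime (c) is handled by the same absolute-continuity idea: with $r_k$ in a fixed compact range of $(0,\infty)$ but the balls $B_{r_k}(x_k)$ escaping to infinity, $\int_{B_{r_k}(x_k)}|Du|^p\,dz\to 0$ because $Du\in L^p(\R^n)$, so again $\frac{|u(x_k)-u(y_k)|}{r_k^{1-n/p}}\to 0$.

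The main obstacle is regime (b), the case $r_k\to\infty$: Morrey's estimate on a single large ball does not by itself give a small bound, so one genuinely needs the chain/covering argument (a "finite chain" estimate, cf. the \emph{Finite Chain Lemma} the paper has reserved notation for) to convert the fixed total energy and the growing separation into a vanishing ratio. The clean way to organize this is: for $x_k,y_k$ with $|x_k-y_k|=r_k$, choose $N_k\sim r_k$ equally spaced points $z_0=x_k,z_1,\dots,z_{N_k}=y_k$ with $|z_{i}-z_{i+1}|=r_k/N_k$ bounded, apply \eqref{babyMorreyIneq} to each consecutive pair on a ball of that fixed radius, sum via the triangle inequality and Hölder's inequality for sums, and observe that the resulting bound is $C\,(r_k/N_k)^{1-n/p}\,N_k^{1-1/p}\big(\sum_i\int_{B_i}|Du|^p\big)^{1/p}\le C' r_k^{1-n/p}\,N_k^{(n-p)/(np)}\big(\int_{\R^n}|Du|^p\big)^{1/p}$ up to bounded overlap, so dividing by $r_k^{1-n/p}$ and using $p>n$ (hence the exponent of $N_k$ is negative) gives a bound tending to $0$ as $r_k\to\infty$. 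Once (a)--(c) are excluded, the compactness step is routine and the theorem follows.
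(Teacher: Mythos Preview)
Your handling of regimes (a) and (c) is correct and mirrors the paper's use of the local Morrey estimate together with absolute continuity of $\int|Du|^p$ (for $r_k\to0$) and tail smallness (for bounded $r_k$ with escaping midpoints). The gap is in regime (b), $r_k=|x_k-y_k|\to\infty$: your chain-along-the-segment bound does not give what you claim. With $N_k$ equally spaced points at spacing $\rho=r_k/N_k$, the triangle inequality and H\"older yield
\[
|u(x_k)-u(y_k)|\le C\rho^{1-n/p}\sum_{i}\Big(\int_{B_i}|Du|^p\Big)^{1/p}
\le C\rho^{1-n/p}N_k^{1-1/p}\Big(\sum_i\int_{B_i}|Du|^p\Big)^{1/p},
\]
and bounded overlap controls the last bracket by $C'\|Du\|_{L^p}$. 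But
\[
(r_k/N_k)^{1-n/p}\,N_k^{1-1/p}=r_k^{1-n/p}\,N_k^{(n-1)/p},
\]
not $r_k^{1-n/p}N_k^{(n-p)/(np)}$ as you wrote. Since $(n-1)/p>0$ for $n\ge2$, taking $N_k\sim r_k\to\infty$ makes the bound blow up rather than vanish; for $n=1$ the exponent is $0$ and you recover only a bounded, non-improving estimate. The heuristic ``fixed total energy plus growing length forces local contributions to vanish'' fails because the segment may pass through the region where $|Du|^p$ is concentrated, and the $\ell^{1/p}$--$\ell^1$ H\"older loss of $N_k^{1-1/p}$ is too expensive.

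What the paper does instead is route the chain \emph{away} from the origin. Given $\epsilon>0$, choose $R$ so that $\int_{\R^n\setminus B_R}|Du|^p<\epsilon^p$. If both $|x_k|,|y_k|\ge 2R$, the Finite Chain Lemma produces at most eight intermediate points in $\R^n\setminus B_{2R}(0)$ such that every associated Morrey ball lies in $\R^n\setminus B_R(0)$ and every step has length $\le|x_k-y_k|$; summing gives $|u(x_k)-u(y_k)|\le 9C\epsilon\,|x_k-y_k|^{1-n/p}$, so the ratio is $\le 9C\epsilon$. The remaining sub-case of your regime (b)---one point bounded, the other escaping---is reduced to the previous one by an algebraic trick: along a subsequence with $|x_{k+1}-y|\ge 2|x_k-y|$ and increasing ratios one checks
\[
\frac{|u(x_{k+1})-u(x_k)|}{|x_{k+1}-x_k|^{1-n/p}}\ge c\,\frac{|u(x_{k+1})-u(y)|}{|x_{k+1}-y|^{1-n/p}}
\]
for a dimensional constant $c>0$; since both $|x_k|,|x_{k+1}|\to\infty$, the left side is forced to $0$ by the first sub-case, and hence so is $[u]_{C^{1-n/p}(\R^n)}$. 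In short, the missing idea in your regime (b) is not ``more links'' but ``links that avoid the region carrying the $L^p$ mass of $Du$,'' which is exactly what the Finite Chain Lemma supplies.
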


\par In order to verify this assertion, it will be convenient for us to employ the following estimate:
\be\label{babyMorreyIneq2}
|u(x)-u(y)|\le C r^{1-n/p}\left(\int_{B_{r/2}\left(\frac{x+y}{2}\right)}|Du|^pdz\right)^{1/p}
\ee
for each $u\in {\cal  D}^{1,p}(\R^n)$ and $x,y\in \R^n$. Here $r=|x-y|$ and $C$ is a constant depending only on $n$ and $p$.
This estimate follows from Morrey's estimate \eqref{babyMorreyIneq} 
by choosing $z_0=\frac{1}{2}(x+y)$ as $x,y\in \partial B_r(z_0)$.  

\par We will prove Theorem \ref{HolderRatioProp} as follows. We first select any pair of sequences $(x_k)_{k\in \N}, (y_k)_{k\in \N}$ such that
$$
[u]_{C^{1-n/p}(\R^n)}=\lim_{k\rightarrow\infty}\frac{|u(x_k)-u(y_k)|}{|x_k-y_k|^{1-n/p}},
$$
and then show 
\be\label{FirstRuleOut}
\liminf_{k\rightarrow\infty}|x_k-y_k|>0
\ee
and  
\be\label{SecondRuleOut}
\sup_{k\in \N}|x_k|,\;\sup_{k\in \N}|y_k|<\infty. 
\ee
It follows that $(x_k)_{k\in \N}$ and $(y_k)_{k\in \N}$ have convergent subsequences $(x_{k_j})_{j\in \N}$ and $(y_{k_j})_{j\in \N}$ that converge to distinct points $x_0$ and $y_0$. We could then conclude by noting
\be
[u]_{C^{1-n/p}(\R^n)}=\lim_{k\rightarrow\infty}\left\{\frac{|u(x_{k_j})-u(y_{k_j})|}{|x_{k_j}-y_{k_j}|^{1-n/p}}\right\}=\frac{|u(x_{0})-u(y_{0})|}{|x_{0}-y_{0}|^{1-n/p}}.
\ee

\par It turns out that \eqref{FirstRuleOut} is easy to prove. For if it fails, we may as well assume $\lim_{k\rightarrow\infty}|x_k-y_k|=0$. Applying \eqref{babyMorreyIneq2} gives  
\begin{align*}
[u]^p_{C^{1-n/p}(\R^n)}&=\limsup_{k\rightarrow\infty}\left\{\frac{|u(x_k)-u(y_k)|^p}{|x_k-y_k|^{p-n}}\right\}\\
&\le C^p\limsup_{k\rightarrow\infty}\int_{B_{r_k/2}\left(\frac{x_k+y_k}{2}\right)}|Du|^pdz \quad\quad (r_k:=|x_k-y_k|)\\
&=0,
\end{align*}
and so $u$ would be constant. The last equality follows from the fact that $|Du|^p$ is an integrable function on $\R^n$ and the Lebesgue measure of $B_{r_k/2}\left(\frac{x_k+y_k}{2}\right)$ tends to $0$ as $k\rightarrow\infty$.   Therefore, we only are left to verify \eqref{SecondRuleOut} in order to conclude Theorem \ref{HolderRatioProp}. We will write separate proofs for $n=1$ and for $n\ge 2$.

\subsection{The $1-n/p$ H\"older ratio is maximized for $n=1$}
Suppose $n=1$ and that \eqref{SecondRuleOut} fails. Then there are three possibilities to consider:

\begin{enumerate}[$(i)$]

\item $x_k, y_k\rightarrow \infty$ 

\item $x_k\rightarrow \infty$, $y_k\rightarrow y$

\item $x_k\rightarrow \infty,y_k\rightarrow - \infty $.

\end{enumerate}
We will argue that these three cases cannot occur.

\par \underline{Case $(i)$}:  Fix $\epsilon>0$.  As $|u'|^p$ is integrable, there is $R>0$ such that 
\be\label{IntegralSmall1D}
\left(\int_{\R\setminus[-R,R]}|u'|^pdx\right)^{1/p}\le \epsilon.
\ee
As the $1-1/p$ H\"older ratio of $u$ is a symmetric function, we may assume 
$$
x_k>y_k\ge R
$$ for all $k\in \N$ sufficiently large.  For all 
such $k\in \N$, we can combine \eqref{FunThmCalc} and \eqref{IntegralSmall1D} to find 
\begin{align*}
\frac{|u(x_{k})-u(y_{k})|}{|x_{k}-y_{k}|^{1-1/p}}&\le \left(\int^{x_k}_{y_k}|u'|^pdx\right)^{1/p} \\
&\le \left(\int^{\infty}_{R}|u'|^pdx\right)^{1/p} \\
&\le \epsilon.
\end{align*}
As a result, 
$$
[u]_{C^{1-1/p}(\R)}=\lim_{k\rightarrow\infty}\frac{|u(x_{k})-u(y_{k})|}{|x_{k}-y_{k}|^{1-1/p}}\le  \epsilon
$$
which forces $u$ to be constant.

\par \underline{Case $(ii)$}: We assume $y=0$ and $u(0)=0$. With this assumption, it is routine to check 
\be
[u]_{C^{1-1/p}(\R)}=\lim_{k\rightarrow\infty}\frac{|u(x_{k})-u(0)|}{(x_{k}-0)^{1-1/p}}=\lim_{k\rightarrow\infty}\frac{|u(x_{k})|}{x_{k}^{1-1/p}}.
\ee
We may also suppose that
$$
\frac{|u(x_{k})|}{x_{k}^{1-1/p}}\le \frac{|u(x_{k+1})|}{x_{k+1}^{1-1/p}}
$$
and 
$$
0< 2x_k\le x_{k+1}
$$
for each $k\in \N$. If not, we can pass to appropriate subsequences to achieve these inequalities.  

\par We note
\begin{align*}
\frac{|u(x_{k+1})-u(x_{k})|}{(x_{k+1}-x_{k})^{1-1/p}}&\ge \frac{|u(x_{k+1})|-|u(x_{k})|}{(x_{k+1}-x_{k})^{1-1/p}}\\
&=\frac{x_{k+1}^{1-1/p}(|u(x_{k+1})|/x_{k+1}^{1-1/p})-x_{k}^{1-1/p}(|u(x_{k})|/x_{k}^{1-1/p})}{(x_{k+1}-x_{k})^{1-1/p}}\\
&\ge \frac{|u(x_{k+1})|}{x_{k+1}^{1-1/p}} \frac{x_{k+1}^{1-1/p} -x_{k}^{1-1/p}}{(x_{k+1}-x_{k})^{1-1/p}}\\
&= \frac{|u(x_{k+1})|}{x_{k+1}^{1-1/p}} \frac{1 -(x_{k}/x_{k+1})^{1-1/p}}{(1-(x_{k}/x_{k+1}))^{1-1/p}}\\
&\ge \frac{|u(x_{k+1})|}{x_{k+1}^{1-1/p}}\left(1 -(1/2)^{1-1/p}\right).
\end{align*}
Therefore, 
\be\label{LowerBoundRatio}
\liminf_{k\rightarrow\infty}\frac{|u(x_{k+1})-u(x_{k})|}{|x_{k+1}-x_{k}|^{1-1/p}}\ge [u]_{C^{1-1/p}(\R)}\left(1 -(1/2)^{1-1/p}\right).
\ee
We can now argue as we did for case $(i)$ to find that $u$ is constant.

\par \underline{Case $(iii)$}:   Without loss of generality we may assume $u(0)=0$ and
$$
y_k< 0< x_k
$$
for all $k\in \N$. Observe 
$$
|x_k-y_k|=x_k-y_k>x_k \quad \text{and} \quad |x_k-y_k|=x_k-y_k>-y_k. 
$$
Consequently, 
\begin{align*}
\frac{|u(x_{k})-u(y_{k})|}{|x_{k}-y_{k}|^{1-1/p}}&\le \frac{|u(x_{k})|}{|x_{k}-y_{k}|^{1-1/p}}+\frac{|u(y_{k})|}{|x_{k}-y_{k}|^{1-1/p}}\\
&\le \frac{|u(x_{k})|}{|x_{k}|^{1-1/p}}+\frac{|u(y_{k})|}{|y_{k}|^{1-1/p}}.
\end{align*}
Therefore it must be that either 
$$
\limsup_{k\rightarrow\infty}\frac{|u(x_{k})|}{|x_{k}|^{1-1/p}}>0\quad \text{or} \quad\limsup_{k\rightarrow\infty}\frac{|u(y_{k})|}{|y_{k}|^{1-1/p}}>0.
$$
In either scenario we can argue as in our proof of case $(ii)$ to conclude that $u$ is constant.

\subsection{An estimate for the $1-n/p$ H\"older ratio}
Now suppose $n\ge 2$. Observe that if $u\in {\cal  D}^{1,p}(\R^n)$ is nonconstant, there is an $R>0$ such that 
$$
C\left(\int_{\R^n\setminus B_R(0)}|Du|^pdx\right)^{1/p}<\frac{1}{2}[u]_{C^{1-n/p}(\R^n)}.
$$
If in addition 
\be\label{SphereInt}
B_{r/2}\left(\frac{x+y}{2}\right)\subset \R^n\setminus B_R(0)
\ee
with $r=|x-y|>0$, \eqref{babyMorreyIneq2} would give 
$$
\frac{|u(x)-u(y)|}{|x-y|^{1-n/p}}\le C\left(\int_{\R^n\setminus B_R(0)}|Du|^pdx\right)^{1/p}< \frac{1}{2}[u]_{C^{1-n/p}(\R^n)}.
$$
In particular, the supremum of the $1-n/p$ H\"older ratio for $u$ can't be achieved by such a pair $x,y\in \R^n$. 

\par We will argue below that a similar estimate is available for $x,y$ with large norm without requiring  \eqref{SphereInt}.
Our strategy is based on the observation that there are $x, z_1,\dots, z_{m}, y\in \R^n$ with large norm 
such that \eqref{SphereInt} holds for each of the consecutive pairs 
$$
(x,z_1),\dots,(z_i,z_{i+1}),\dots, (z_{m},y).
$$
Moreover, $m$ can be estimated from above provided $|x|,|y|$ are sufficiently large.  In summary, we have the following technical assertion which is proved in the appendix. 
\begin{fcl}
Suppose $R>0$ and $x,y\in \R^n\setminus B_{2R}(0)$. Then there are $m\in\{1,\dots, 8\}$ and $z_1,\dots, z_{m}\in \R^n\setminus B_{2R}(0)$
such that 
\be\label{distancesLessXminusY}
|x-z_1|, \dots,|z_i-z_{i+1}|,\dots,|z_{m}-y|\le  |y-x|
\ee
and 
\be\label{BallsIncludedINBRcomp}
\begin{cases}
B_{r/2}\left(\frac{x+z_1}{2}\right)& \text{with}\quad r=|x-z_1|\\
\hspace{.5in} \vdots\\
B_{r_i/2}\left(\frac{z_i+z_{i+1}}{2}\right) & \text{with}\quad r_i=|z_i-z_{i+1}|\\
\hspace{.5in} \vdots\\
B_{s/2}\left(\frac{z_{m}+y}{2}\right)& \text{with}\quad s=|z_{m}-y|
\end{cases}
\ee
are all subsets of $\R^n\setminus B_R(0)$.
\end{fcl}

\begin{figure}[h]
\centering
 \includegraphics[width=.8\textwidth]{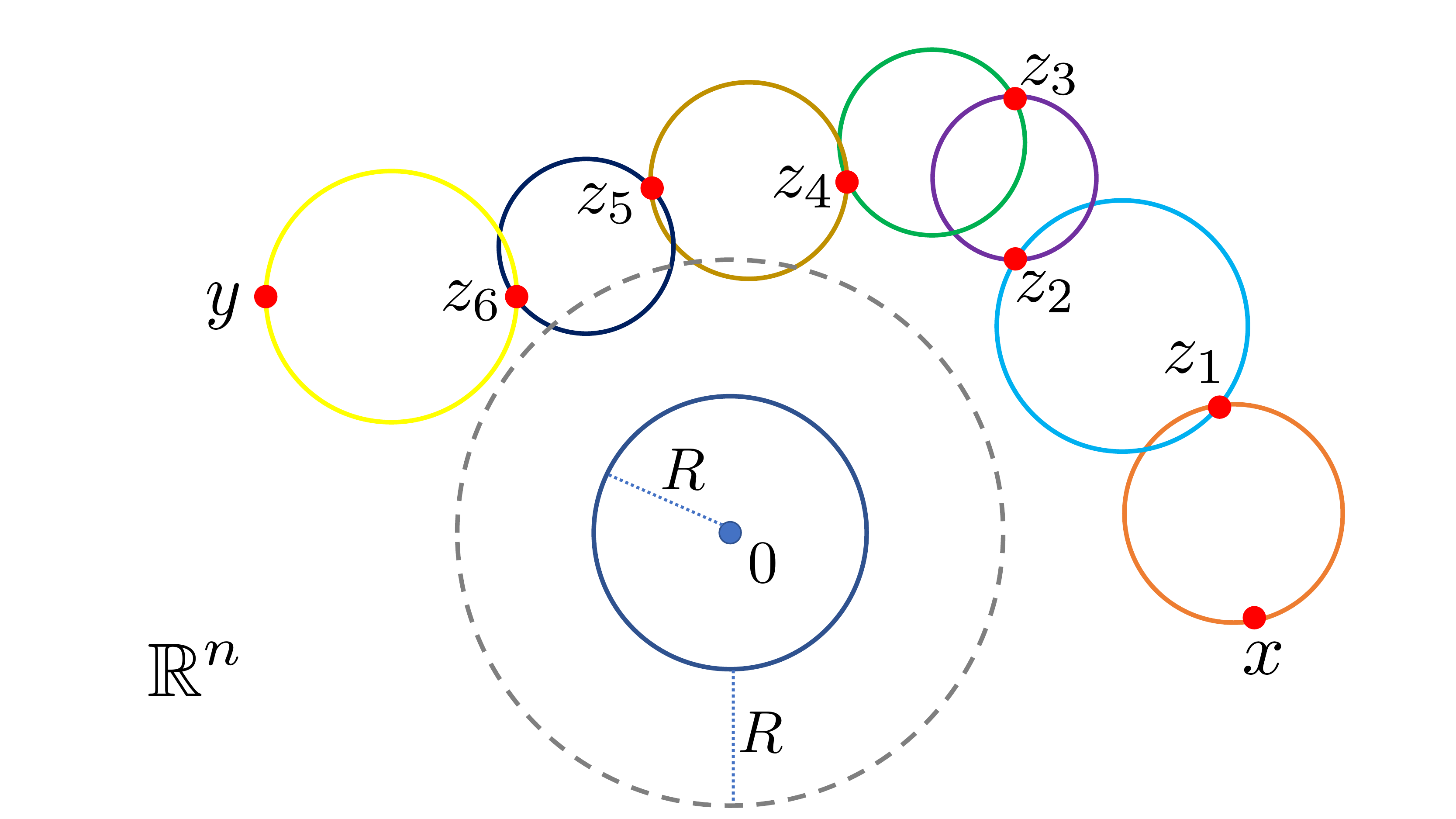}
 \caption{This figure illustrates the conclusion of the Finite Chain Lemma. Note that $x,y\in \R^n\setminus B_{2R}(0)$ and
$z_1,\dots,z_6$ satisfy \eqref{distancesLessXminusY} and \eqref{BallsIncludedINBRcomp}. }
\end{figure}

\par The main application of the Finite Chain Lemma is the following assertion. 

 \begin{lem}
Suppose $R>0$, $u\in {\cal  D}^{1,p}(\R^n)$, and $x,y\in \R^n\setminus B_{2R}(0)$.  Then 
\be\label{EstimateForLargeXY}
|u(x)-u(y)|\le 9 C\left(\int_{\R^n\setminus B_R(0)}|Du|^pdx\right)^{1/p}|x-y|^{1-n/p}
\ee
\end{lem}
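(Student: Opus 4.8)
The plan is to deduce \eqref{EstimateForLargeXY} directly from the Finite Chain Lemma together with the local estimate \eqref{babyMorreyIneq2} and the triangle inequality. First I would apply the Finite Chain Lemma with the given $R>0$ and $x,y\in\R^n\setminus B_{2R}(0)$ to obtain an integer $m\in\{1,\dots,8\}$ and points $z_1,\dots,z_m\in\R^n\setminus B_{2R}(0)$ for which \eqref{distancesLessXminusY} and \eqref{BallsIncludedINBRcomp} hold. Setting $z_0:=x$ and $z_{m+1}:=y$, the triangle inequality gives
$$
|u(x)-u(y)|\le \sum_{i=0}^{m}|u(z_i)-u(z_{i+1})|.
$$

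Next, for each $i\in\{0,\dots,m\}$ I would write $r_i:=|z_i-z_{i+1}|$ and apply the local estimate \eqref{babyMorreyIneq2} to the pair $z_i,z_{i+1}$ to obtain
$$
|u(z_i)-u(z_{i+1})|\le C\, r_i^{1-n/p}\left(\int_{B_{r_i/2}\left(\frac{z_i+z_{i+1}}{2}\right)}|Du|^p\,dz\right)^{1/p}.
$$
By \eqref{BallsIncludedINBRcomp}, each of these balls $B_{r_i/2}\!\left(\frac{z_i+z_{i+1}}{2}\right)$ is contained in $\R^n\setminus B_R(0)$, so the integral appearing above is at most $\int_{\R^n\setminus B_R(0)}|Du|^p\,dz$. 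Moreover, since $p>n$ the exponent $1-n/p$ is positive, and \eqref{distancesLessXminusY} gives $r_i\le|x-y|$, whence $r_i^{1-n/p}\le|x-y|^{1-n/p}$. Combining these observations, each summand is bounded by
$$
C\,|x-y|^{1-n/p}\left(\int_{\R^n\setminus B_R(0)}|Du|^p\,dz\right)^{1/p}.
$$

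Finally, since the sum has $m+1\le 9$ terms, adding the estimates for $i=0,\dots,m$ yields \eqref{EstimateForLargeXY}. The only genuine difficulty in this argument — the geometric construction of a chain $z_1,\dots,z_m$ joining $x$ to $y$ that has boundedly many links, whose consecutive distances do not exceed $|x-y|$, and whose associated midpoint balls avoid $B_R(0)$ — is precisely the content of the Finite Chain Lemma, which is established in the appendix; granting that lemma, the present estimate is immediate, so I do not anticipate any further obstacle here.
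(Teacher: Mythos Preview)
Your proposal is correct and follows essentially the same approach as the paper: apply the Finite Chain Lemma, split $|u(x)-u(y)|$ via the triangle inequality along the chain, bound each term by \eqref{babyMorreyIneq2} using \eqref{BallsIncludedINBRcomp} to restrict the integral to $\R^n\setminus B_R(0)$ and \eqref{distancesLessXminusY} to bound $r_i^{1-n/p}\le |x-y|^{1-n/p}$, and then sum the at most nine terms.
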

\begin{proof}
We apply the Finite Chain Lemma to obtain $z_1,\dots, z_{m}\in \R^n\setminus B_{2R}(0)$ with $m\in\{1,\dots, 8\}$ that
satisfy \eqref{distancesLessXminusY} and \eqref{BallsIncludedINBRcomp}.  Next, we employ \eqref{babyMorreyIneq2}  to get 
\begin{align*}
|u(x)-u(y)|&\le |u(x)-u(z_1)|+\sum^{m-1}_{j=1}|u(z_j)-u(z_{j+1})|+|u(z_{m})-u(y)|\\
&=C\left(\int_{\R^n\setminus B_R(0)}|Du|^pdx\right)^{1/p}\left(|x-z_1|^{1-n/p}+\sum^{m-1}_{j=1}|z_j-z_{j+1}|^{1-n/p}+|z_{m}-y|^{1-n/p}\right)\\
&\le (m+1) C\left(\int_{\R^n\setminus B_R(0)}|Du|^pdx\right)^{1/p}|x-y|^{1-n/p}\\
&\le 9 C\left(\int_{\R^n\setminus B_R(0)}|Du|^pdx\right)^{1/p}|x-y|^{1-n/p}.
\end{align*}

\end{proof}

\subsection{The $1-n/p$ H\"older ratio is maximized for $n\ge 2$}
We are now in position to verify \eqref{SecondRuleOut} and in turn complete our proof of Theorem \ref{HolderRatioProp} for $n\ge 2$. 
There are two cases to consider:

\begin{enumerate}[$(i)$]

\item $|x_k|,|y_k|\rightarrow \infty$

\item $|x_k|\rightarrow \infty$, $y_k\rightarrow y$.

\end{enumerate}

\par \underline{Case $(i)$}:   
Fix $\epsilon>0$ and choose $R>0$ so large that 
\be\label{9timesDuIntegralSmall}
9C\left(\int_{\R^n\setminus B_R(0)}|Du|^pdx\right)^{1/p}<\epsilon.
\ee
For all large enough $k\in \N$
$$
|y_k|, |x_k|\ge 2R,
$$
so we can combine \eqref{EstimateForLargeXY} and \eqref{9timesDuIntegralSmall} to get
$$
\frac{|u(x_k)-u(y_k)|}{|x_k-y_k|^{1-n/p}}<\epsilon.
$$
Therefore,
$$
[u]_{C^{1-n/p}(\R^n)}=\lim_{k\rightarrow\infty}\frac{|u(x_k)-u(y_k)|}{|x_k-y_k|^{1-n/p}}\le\epsilon.
$$
It follows that $u$ must be constant.  

\par \underline{Case $(ii)$}: It is routine to verify 
$$
[u]_{C^{1-n/p}(\R^n)}=\lim_{k\rightarrow\infty}\frac{|u(x_k)-u(y)|}{|x_k-y|^{1-n/p}}.
$$
We may also assume for each $k\in \N$
$$
\frac{|u(x_k)-u(y)|}{|x_k-y|^{1-n/p}}\le\frac{|u(x_{k+1})-u(y)|}{|x_{k+1}-y|^{1-n/p}}
$$
and
$$
0<2|x_k-y|\le |x_{k+1}-y|,
$$
as these inequalities hold along appropriately selected subsequences.  Observe

\begin{align*}
\frac{|u(x_{k+1})-u(x_k)|}{|x_{k+1}-x_k|^{1-n/p}}&\ge \frac{|u(x_{k+1})-u(y)|-|u(x_k)-u(y)|}{|(x_{k+1}-y)-(x_k-y)|^{1-n/p}}\\
&\ge\frac{|u(x_{k+1})-u(y)|}{|x_{k+1}-y|^{1-n/p}} \frac{|x_{k+1}-y|^{1-n/p}-|x_k-y|^{1-n/p}}{|(x_{k+1}-y)-(x_k-y)|^{1-n/p}}\\
&\ge\frac{ |u(x_{k+1})-u(y)|}{\displaystyle |x_{k+1}-y|^{1-n/p}} \frac{|x_{k+1}-y|^{1-n/p}-\frac{1}{2^{1-n/p}}|x_{k+1}-y|^{1-n/p}}{|(x_{k+1}-y)-(x_k-y)|^{1-n/p}}\\
&=\frac{|u(x_{k+1})-u(y)|}{|x_{k+1}-y|^{1-n/p}}\left(1-\left( \frac{1}{2}\right)^{1-n/p}\right)\frac{|x_{k+1}-y|^{1-n/p}}{|(x_{k+1}-y)-(x_k-y)|^{1-n/p}}.
\end{align*}
\par As
$$
|(x_{k+1}-y)-(x_k-y)|\le |x_{k+1}-y|+|x_k-y|\le \frac{3}{2}|x_{k+1}-y|,
$$
it follows that
$$
\frac{|x_{k+1}-y|^{1-n/p}}{|(x_{k+1}-y)-(x_k-y)|^{1-n/p}}\ge\left( \frac{2}{3}\right)^{1-n/p}.
$$
As a result,
\be\label{xkplusoneandxk}
\liminf_{k\rightarrow\infty}\frac{|u(x_{k+1})-u(x_k)|}{|x_{k+1}-x_k|^{1-n/p}}\ge [u]_{C^{1-n/p}(\R^n)}\left[\left( \frac{2}{3}\right)^{1-n/p}-\left( \frac{1}{3}\right)^{1-n/p}\right].
\ee
We can now argue as in the case $(i)$ to conclude that $u$ must be constant.

\subsection{Stability}
It turns out that a type of stability for Morrey's inequality follows directly from Theorem \ref{HolderRatioProp}. In order to establish this stability, we will make use of Clarkson's inequalities which state: for $f,g\in L^p(\R^n)$
\be\label{FirstClarkson}
\left\|\frac{f+g}{2}\right\|^p_{L^p(\R^n)}+\left\|\frac{f-g}{2}\right\|^p_{L^p(\R^n)}\le\frac{1}{2}\left\|f\right\|^p_{L^p(\R^n)}+\frac{1}{2}\left\|g\right\|^p_{L^p(\R^n)}
\ee
for $2<p<\infty$ and 
$$
\left\|\frac{f+g}{2}\right\|^{\frac{p}{p-1}}_{L^p(\R^n)}+\left\|\frac{f-g}{2}\right\|^{\frac{p}{p-1}}_{L^p(\R^n)}\le\left(\frac{1}{2}\left\|f\right\|^p_{L^p(\R^n)}+\frac{1}{2}\left\|g\right\|^p_{L^p(\R^n)}\right)^{\frac{1}{p-1}}
$$
for $1< p\le 2$. We would also like to emphasize that the method presented in our proof below is quite different from the way stability is typically pursued for Sobolev and related isoperimetric inequalities \cite{MR1124290, MR2538501,MR2395175,MR3896203,MR2456887,MR3404715}.

\begin{cor}
Suppose $v\in {\cal  D}^{1,p}(\R^n)$.  Then there is an extremal $u\in {\cal  D}^{1,p}(\R^n)$ such that
\be\label{StabilitypBig2}
\left(\frac{C_*}{2}\right)^p\|Du-Dv\|^p_{L^p(\R^n)}+[v]^p_{C^{1-n/p}(\R^n)}\le C_*^p\|Dv\|^p_{L^p(\R^n)}
\ee
when $2<p<\infty$ and 
\be
\left(\frac{1}{2}\right)^{\frac{p}{p-1}}\|u'-v'\|^{\frac{p}{p-1}}_{L^p(\R)}+[v]^{\frac{p}{p-1}}_{C^{1-1/p}(\R)}\le \|v'\|^{\frac{p}{p-1}}_{L^p(\R)}
\ee
when $1<p\le 2$.
\end{cor}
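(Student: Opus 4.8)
The plan is to combine Theorem~\ref{HolderRatioProp}, which guarantees that the H\"older ratio of the nonconstant function $v$ is attained at a genuine pair of distinct points $x_0,y_0\in\R^n$, with the variational characterization of extremals in Theorem~\ref{equivalentChar}, and then to invoke Clarkson's inequalities to convert the resulting comparison into the stated quantitative bound. First I would dispose of the trivial case: if $v$ is constant, then $[v]_{C^{1-n/p}(\R^n)}=0$ and we may take $u$ to be any extremal whose H\"older ratio is attained at a pair where $v$ is also constant, so that $Dv=0$ makes the inequality read $(C_*/2)^p\|Du\|_{L^p}^p\le 0$ --- wait, this is false, so instead for constant $v$ we simply take $u$ constant as well and both sides vanish. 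So assume $v$ is nonconstant.

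Next I would apply Theorem~\ref{HolderRatioProp} to obtain distinct $x_0,y_0\in\R^n$ with $[v]_{C^{1-n/p}(\R^n)}=|v(x_0)-v(y_0)|/|x_0-y_0|^{1-n/p}$, and then let $u\in{\cal D}^{1,p}(\R^n)$ be \emph{the} extremal supplied by Corollary~\ref{ExistenceCor} and Corollary~\ref{TrueUnique} with $u(x_0)=v(x_0)$, $u(y_0)=v(y_0)$, whose H\"older ratio is maximized at $x_0,y_0$. By Theorem~\ref{equivalentChar}(iii) applied with this $u$ and the competitor $v$ (which satisfies the pointwise constraints $v(x_0)=u(x_0)$, $v(y_0)=u(y_0)$), we get $\|Du\|_{L^p(\R^n)}^p\le\|Dv\|_{L^p(\R^n)}^p$; but more importantly the same characterization applied to the midpoint competitor $w:=\tfrac{1}{2}(u+v)$ gives $\|Du\|_{L^p(\R^n)}^p\le\|D w\|_{L^p(\R^n)}^p=\|\tfrac{1}{2}(Du+Dv)\|_{L^p(\R^n)}^p$, since $w(x_0)=u(x_0)$ and $w(y_0)=u(y_0)$ as well. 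For $2<p<\infty$ I would now apply Clarkson's inequality~\eqref{FirstClarkson} with $f=Du$, $g=Dv$:
$$
\left\|\tfrac{Du+Dv}{2}\right\|_{L^p(\R^n)}^p+\left\|\tfrac{Du-Dv}{2}\right\|_{L^p(\R^n)}^p\le\tfrac{1}{2}\|Du\|_{L^p(\R^n)}^p+\tfrac{1}{2}\|Dv\|_{L^p(\R^n)}^p.
$$
Combining with $\|Du\|_{L^p}^p\le\|\tfrac{1}{2}(Du+Dv)\|_{L^p}^p$ yields $\|Du\|_{L^p}^p+\left\|\tfrac{Du-Dv}{2}\right\|_{L^p}^p\le\tfrac{1}{2}\|Du\|_{L^p}^p+\tfrac{1}{2}\|Dv\|_{L^p}^p$, hence $\left\|\tfrac{Du-Dv}{2}\right\|_{L^p(\R^n)}^p\le\tfrac{1}{2}\left(\|Dv\|_{L^p(\R^n)}^p-\|Du\|_{L^p(\R^n)}^p\right)$, and multiplying through by $C_*^p$ and using $C_*^p\|Du\|_{L^p(\R^n)}^p=[u]_{C^{1-n/p}(\R^n)}^p=[v]_{C^{1-n/p}(\R^n)}^p$ (the extremal identity, valid since $u$ is an extremal whose H\"older ratio is attained at $x_0,y_0$ with the same values as $v$) gives exactly~\eqref{StabilitypBig2}. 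The case $1<p\le 2$ is identical in structure: here $n=1$ necessarily (the second Clarkson inequality is stated over $\R$, and indeed $1<p\le2<n$ would be vacuous, so $n=1$), one uses the second Clarkson inequality and the elementary fact that $t\mapsto t^{1/(p-1)}$ is concave for $p\le 2$, or more directly one observes $\|u'\|_{L^p}\le\|\tfrac12(u'+v')\|_{L^p}$ still holds and feeds it into the second Clarkson inequality, rearranging with the substitution of the extremal identity $\|u'\|_{L^p}^{p/(p-1)}=[v]_{C^{1-1/p}(\R)}^{p/(p-1)}/C_*^{p/(p-1)}$ and $C_*=1$ from the one-dimensional analysis.

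The only genuine subtlety --- the step I expect to require the most care --- is verifying that the extremal identity $C_*^p\|Du\|_{L^p(\R^n)}^p=[v]_{C^{1-n/p}(\R^n)}^p$ can legitimately be substituted: this uses that $u$ is an extremal \emph{and} that its H\"older ratio is attained precisely at $x_0,y_0$ with $u(x_0)-u(y_0)=v(x_0)-v(y_0)$, which is exactly what Corollary~\ref{ExistenceCor} delivers, so that $[v]_{C^{1-n/p}(\R^n)}=|u(x_0)-u(y_0)|/|x_0-y_0|^{1-n/p}=[u]_{C^{1-n/p}(\R^n)}=C_*\|Du\|_{L^p(\R^n)}$. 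Everything else is a mechanical rearrangement of the Clarkson inequality against the two variational comparisons, and I would present it as such without belaboring the algebra.
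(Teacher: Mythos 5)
Your proposal is correct and follows essentially the paper's own route: both arguments take the pair $x_0,y_0$ from Theorem \ref{HolderRatioProp}, choose via Corollary \ref{ExistenceCor} the extremal $u$ with $u(x_0)=v(x_0)$, $u(y_0)=v(y_0)$, and play the midpoint $\tfrac12(u+v)$ against Clarkson's inequality --- the only (immaterial) difference being that you bound $[v]_{C^{1-n/p}(\R^n)}$ through Theorem \ref{equivalentChar}(iii) with competitor $\tfrac12(u+v)$ and the identity $C_*\|Du\|_{L^p(\R^n)}=[u]_{C^{1-n/p}(\R^n)}=[v]_{C^{1-n/p}(\R^n)}$, whereas the paper uses $[v]_{C^{1-n/p}(\R^n)}=\bigl[\tfrac{u+v}{2}\bigr]_{C^{1-n/p}(\R^n)}$ together with Morrey's inequality applied to the average. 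One small correction: your rearrangement does not give \eqref{StabilitypBig2} ``exactly'' but the slightly stronger bound $2\left(\tfrac{C_*}{2}\right)^p\|Du-Dv\|^p_{L^p(\R^n)}+[v]^p_{C^{1-n/p}(\R^n)}\le C_*^p\|Dv\|^p_{L^p(\R^n)}$, which of course implies the stated inequality.
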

\begin{proof}
According to Theorem \ref{HolderRatioProp}, there are distinct $x_0,y_0\in\R^n$ such that 
$$
[v]_{C^{1-n/p}(\R^n)}=\frac{|v(x_0)-v(y_0)|}{|x_0-y_0|^{1-n/p}}.
$$
By Corollary \ref{ExistenceCor}, we can also select an extremal $u$ with $u(x_0)=v(x_0)$, $u(y_0)=v(y_0)$ and 
$$
[u]_{C^{1-n/p}(\R^n)}=[v]_{C^{1-n/p}(\R^n)}.
$$
Of course, we have 
$$
\|Du\|_{L^p(\R^n)}\le \|Dv\|_{L^p(\R^n)}.
$$
It is also easy to check that 
$$
[v]_{C^{1-n/p}(\R^n)}=\left[\frac{u+v}{2}\right]_{C^{1-n/p}(\R^n)}.
$$

\par First suppose $2<p<\infty$. We note that inequality \eqref{FirstClarkson}, while stated for functions, also holds for measurable mappings of $f,g:\R^n\rightarrow\R^n$. Therefore, 
\begin{align*}
\left(\frac{C_*}{2}\right)^p\left\|Du-Dv\right\|^p_{L^p(\R^n)}+[v]_{C^{1-n/p}(\R^n)}^p&=C^p_*\left\|\frac{Du-Dv}{2}\right\|^p_{L^p(\R^n)}+[v]_{C^{1-n/p}(\R^n)}^p\\
&=C^p_* \left\|\frac{Du-Dv}{2}\right\|^p_{L^p(\R^n)}+\left[\frac{u+v}{2}\right]^p_{C^{1-n/p}(\R^n)}\\
&\le  C^p_*\left\|\frac{Du-Dv}{2}\right\|^p_{L^p(\R^n)}+ C^p_*\left\|\frac{Du+Dv}{2}\right\|^p_{L^p(\R^n)}\\
&=  C^p_*\left(\left\|\frac{Du-Dv}{2}\right\|^p_{L^p(\R^n)}+ \left\|\frac{Du+Dv}{2}\right\|^p_{L^p(\R^n)}\right)\\
&\le  C^p_*\left(\frac{1}{2}\|Du\|^p_{L^p(\R^n)}+ \frac{1}{2}\|Dv\|^p_{L^p(\R^n)}\right)\\
&\le  C^p_*\|Dv\|^p_{L^p(\R^n)}.
\end{align*}

\par Recall that when $1<p\le 2$, $n$ is necessarily equal to $1$. As a result, we can apply the other Clarkson's inequality to find
\begin{align*}
\left(\frac{1}{2}\right)^{\frac{p}{p-1}}\|u'-v'\|^{\frac{p}{p-1}}_{L^p(\R)}+[v]_{C^{1-1/p}(\R)}^{\frac{p}{p-1}} &=\left\|\frac{u'-v'}{2}\right\|^{\frac{p}{p-1}}_{L^p(\R)}+[v]_{C^{1-1/p}(\R)}^{\frac{p}{p-1}} \\
&=\left\|\frac{u'-v'}{2}\right\|^{\frac{p}{p-1}}_{L^p(\R)}+\left[\frac{u+v}{2}\right]_{C^{1-1/p}(\R)}^{\frac{p}{p-1}}\\
&\le \left\|\frac{u'-v'}{2}\right\|^{\frac{p}{p-1}}_{L^p(\R)}+ \left\|\frac{u'+v'}{2}\right\|^{\frac{p}{p-1}}_{L^p(\R)}\\
&\le \left(\frac{1}{2}\left\|u'\right\|^p_{L^p(\R)}+\frac{1}{2}\left\|v'\right\|^p_{L^p(\R)}\right)^{\frac{1}{p-1}}\\
&\le  \|v'\|^{\frac{p}{p-1}}_{L^p(\R)}.
\end{align*}
\end{proof}

\appendix

\section{Finite Chain Lemma}\label{FCL}
We will first pursue the Finite Chain Lemma for $n=2$. To this end, we will need to recall a few facts about isosceles triangles. Suppose $a>0$ and consider an isosceles triangle with two sides equal to $1+a$ and another equal to $a$. Using the law of cosines we find that the angle $\theta(a)$ between the two sides of length $1+a$ satisfies 
\be\label{thetaA}
\cos(\theta(a))=1-\frac{1}{2}\left(\frac{a}{1+a}\right)^2.
\ee
It is also easy to check that $0<\theta(a)<\pi/3$ and that $\theta(a)$ is increasing in $a$. In particular, $\theta(1)\le \theta(a)$ for $a\ge1$. 
It will also be useful to note
$$
\theta(1)=\cos^{-1}\left(\frac{7}{8}\right) >\frac{\pi}{7}.
$$

\par With these observations, we can derive the following assertion.

\begin{lem}\label{BabyGeometryLemma}
Assume $a\ge 1$ and $x,y\in \R^2$ with 
$$
|y|= |x|=1+a.
$$
If 
$$
|y-x|>a,
$$
there are $z_1,\dots, z_m\in \R^2$ with $m\in \{1,\dots, 7\}$ such that 
$$
|z_1|=\dots=|z_m|=1+a,
$$
\be\label{SequenceDifferences}
|x-z_1|=|z_1-z_2|=\dots=|z_{m-1}-z_m|=a,
\ee
and
$$
\left|y-z_m\right|\le a. 
$$
\end{lem}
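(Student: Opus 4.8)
The plan is to place $x$ and $y$ on the circle of radius $1+a$ centered at the origin and to ``walk'' from $x$ toward $y$ along that circle in steps whose chord length is exactly $a$. Each such step turns the position by the central angle $\theta(a)$, where $\cos\theta(a)=1-\tfrac12\left(\tfrac{a}{1+a}\right)^2$ by the law of cosines, exactly as recorded in \eqref{thetaA}. The hypothesis $|y-x|>a$ guarantees the angular separation between $x$ and $y$ (measured along the shorter arc, say) is strictly larger than $\theta(a)$, so at least one step is needed and the first step does not overshoot. I would set $z_1$ to be the point at angular distance $\theta(a)$ from $x$ in the direction of $y$, then $z_2$ the next, and so on, each satisfying $|z_i|=1+a$ and $|z_{i-1}-z_i|=a$, and stop at the first index $m$ for which the remaining angular distance from $z_m$ to $y$ is at most $\theta(a)$, which by a second application of the law of cosines (monotonicity of chord length in central angle on $[0,\pi]$) is equivalent to $|y-z_m|\le a$.

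The only remaining point is the bound $m\le 7$. Since the total angular distance between $x$ and $y$ along a semicircle is at most $\pi$, and each of the first $m-1$ full steps consumes angle exactly $\theta(a)\ge\theta(1)=\cos^{-1}(7/8)>\pi/7$ (using that $\theta$ is increasing and $a\ge 1$), we get $(m-1)\cdot\frac{\pi}{7}<\pi$, hence $m-1<7$, i.e. $m\le 7$. This is the step I expect to be the crux: one must be slightly careful that ``angular distance'' is taken consistently (the shorter way around, lying in $[0,\pi]$) so that $m-1$ steps of size $>\pi/7$ genuinely fit inside an interval of length $\le\pi$, and that the stopping rule ``$|y-z_m|\le a$'' is correctly translated into the angular statement via monotonicity of $t\mapsto 2(1+a)\sin(t/2)$ on $[0,\pi]$. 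One also checks the degenerate case $|y-x|=1+a$ or antipodal configurations cause no trouble, since there the angular distance is still $\le\pi$.

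More precisely, I would organize the write-up as follows. First, parametrize: write $x=(1+a)(\cos\alpha,\sin\alpha)$ and $y=(1+a)(\cos\beta,\sin\beta)$, and let $\psi\in[0,\pi]$ be the angle between them, so $|y-x|=2(1+a)\sin(\psi/2)$; from $|y-x|>a$ and $|z_{i-1}-z_i|=2(1+a)\sin(\theta(a)/2)=a$ deduce $\psi>\theta(a)$. Second, define $z_j=(1+a)(\cos(\alpha+j\sigma\theta(a)),\sin(\alpha+j\sigma\theta(a)))$ for $j=1,\dots,m$, where $\sigma=\pm1$ is the sign making the angle move from $\alpha$ toward $\beta$, and $m$ is the least integer with $m\,\theta(a)\ge\psi$; then $m\ge 1$ and $(m-1)\theta(a)<\psi$. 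Third, verify the three displayed conclusions: $|z_j|=1+a$ is immediate; $|x-z_1|=|z_{j-1}-z_j|=2(1+a)\sin(\theta(a)/2)=a$ by construction; and $|y-z_m|=2(1+a)\sin(|\psi-m\theta(a)|/2)\le 2(1+a)\sin(\theta(a)/2)=a$ because $0\le m\theta(a)-\psi<\theta(a)$ and $\sin$ is increasing on $[0,\pi/2]$ with $\theta(a)/2<\pi/6$. Finally, bound $m$: since $(m-1)\theta(a)<\psi\le\pi$ and $\theta(a)\ge\theta(1)>\pi/7$, we get $m-1<7$, so $m\le 7$, completing the proof.
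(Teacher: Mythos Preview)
Your proposal is correct and follows essentially the same route as the paper: place $x,y$ on the circle of radius $1+a$, step around by the fixed central angle $\theta(a)$ determined by \eqref{thetaA}, and use $\theta(a)\ge\theta(1)>\pi/7$ together with the fact that the relevant arc has angular length at most $\pi$ to bound the number of steps by $7$. The only cosmetic differences are that the paper first rotates to put $x=(1+a)e_1$ and then splits into the cases $\vartheta\in(0,\pi]$ and $\vartheta\in(\pi,2\pi)$, whereas you carry a general angle $\alpha$ and encode the direction with $\sigma=\pm1$; your stopping rule $m=\min\{j:\,j\theta(a)\ge\psi\}$ and the paper's $(m-1)\theta(a)\le\vartheta<m\theta(a)$ agree up to the harmless boundary case where $\psi/\theta(a)$ is an integer.
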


\begin{figure}[h]
\centering
 \includegraphics[width=.8\textwidth]{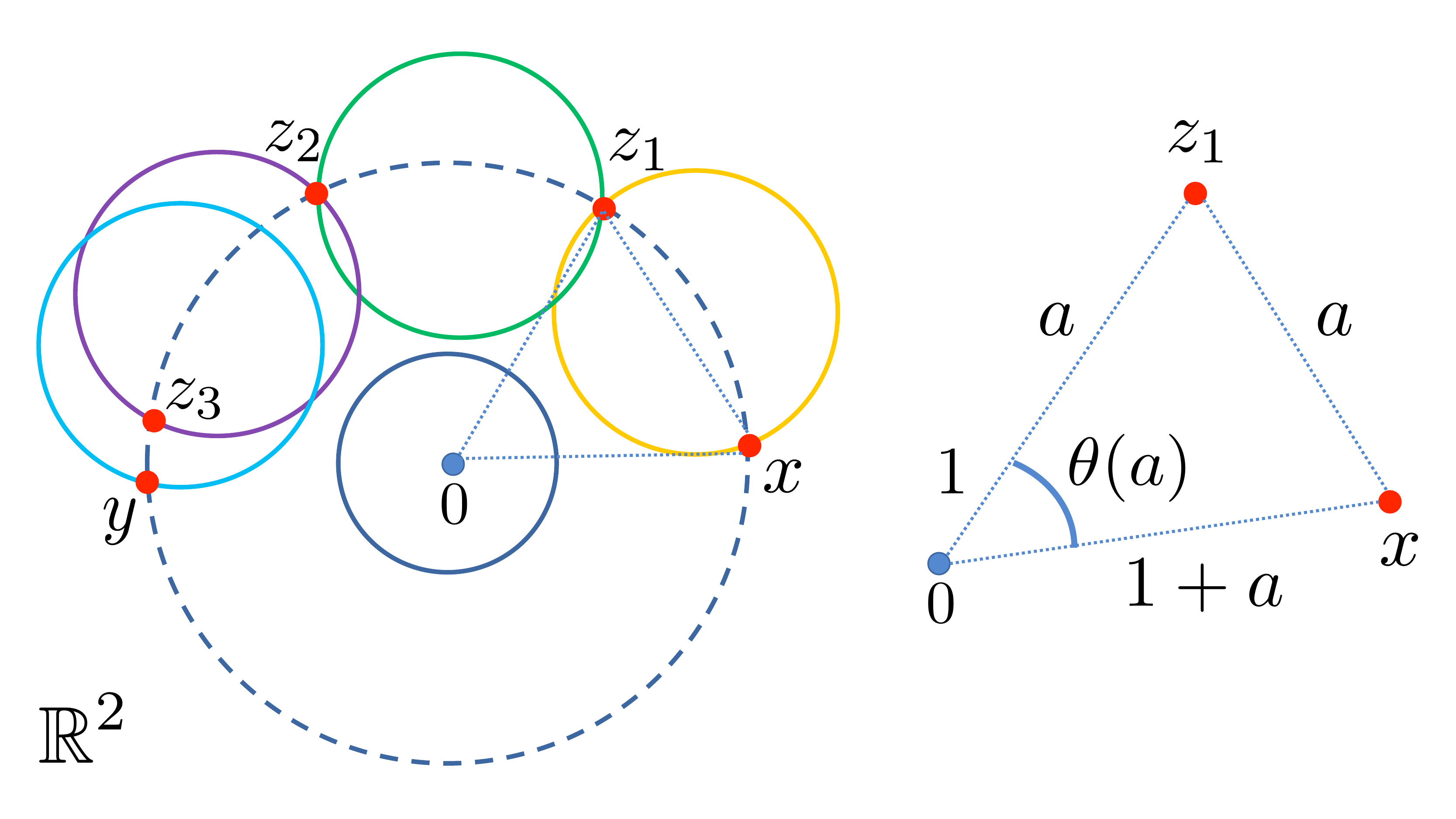}
 \caption{This figure on the left is a schematic of the assertion made in Lemma \ref{BabyGeometryLemma}. This corresponds to the case where 
 three points $z_1,z_2,z_3$ are needed to form a finite chain linking $x$ to $y$ on the circle centered at $0$ of radius $1+a$. The enlarged triangle 
 on the right shows how $\theta(a)$ is defined and how it is related to the figure on the left.}
\end{figure}

\begin{proof}
First assume $x=(1+a)e_1$.  Note that we can write 
$$
y=(1+a)(\cos\vartheta,\sin\vartheta).
$$
for some $\vartheta\in (0,2\pi)$. If $\vartheta\in (0,\pi]$, we set 
$$
z_j=(1+a)(\cos(j\theta(a)),\sin(j\theta(a)))
$$
$j=1,\dots, 7$.   By the definition of $\theta(a)$, \eqref{SequenceDifferences} holds.  

\par As $7\theta(a)>\pi$, there is a $m\in \{1,\dots, 7\}$ such that 
$$
(m-1)\theta(a)\le \vartheta< m\theta(a). 
$$
It follows that
\begin{align*}
|y-z_m|^2&=(1+a)^22\left[1-\cos(m\theta(a)-\vartheta)\right]\\
&\le (1+a)^22\left[1-\cos(\theta(a))\right]\\
&=a^2.
\end{align*}

\par If $\vartheta\in (\pi,2\pi)$ we can reason as in the case when $\vartheta\in (0,\pi]$ for $-y$ and obtain points $z_1,\dots, z_m\in \R^2$. In this case, $-z_1,\dots, -z_m$ satisfy the conclusion of this lemma.  For a general $x$ that is not necessarily equal to  $(1+a)e_1$, we can find a rotation $O$ of $\R^2$ so that $Ox=(1+a)e_1$. Then we can prove the assertion for $(1+a)e_1$ and $Oy$ to get points $z_1,\dots, z_m\in \R^2$ satisfying the conclusion of the lemma as we argued above. Then $O^{-1}z_1,\dots, O^{-1}z_m$ satisfy the conclusion of this lemma for the given $x$ and $y$. 
\end{proof}

\begin{cor}\label{xandyequalCor}
Assume $s\ge t>0$ and $x,y\in \R^2$ with 
$$
|y|= |x|=t+s.
$$
If 
$$
|y-x|>s,
$$
there are $z_1,\dots, z_m\in \R^2$ with $m\in \{1,\dots, 7\}$ such that 
\be\label{zmNorms}
|z_1|=\dots=|z_m|=t+s,
\ee
\be\label{SequenceDifferencesAgain}
|x-z_1|=|z_1-z_2|=\dots=|z_{m-1}-z_m|=s,
\ee
and
\be\label{diffYandZm}
\left|y-z_m\right|\le s. 
\ee
\end{cor}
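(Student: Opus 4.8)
The plan is to deduce this corollary from Lemma~\ref{BabyGeometryLemma} by a straightforward rescaling, exploiting the fact that both the hypotheses and the conclusions are homogeneous under dilations of $\R^2$.

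First I would set $a := s/t$. Since $s \ge t > 0$, we have $a \ge 1$, which is exactly the hypothesis needed to invoke Lemma~\ref{BabyGeometryLemma}. Next, observe that $t+s = t(1+a)$, so the rescaled points $x/t$ and $y/t$ satisfy $|x/t| = |y/t| = 1+a$ and
\[
\left|\frac{y}{t} - \frac{x}{t}\right| = \frac{|y-x|}{t} > \frac{s}{t} = a.
\]
I would then apply Lemma~\ref{BabyGeometryLemma} to the pair $x/t,\, y/t$ to obtain points $w_1,\dots,w_m \in \R^2$ with $m \in \{1,\dots,7\}$ such that $|w_1| = \dots = |w_m| = 1+a$, $|x/t - w_1| = |w_1 - w_2| = \dots = |w_{m-1} - w_m| = a$, and $|y/t - w_m| \le a$.

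Finally, I would set $z_j := t\, w_j$ for $j = 1,\dots,m$. Multiplying each of the norm and distance relations above by $t$ gives $|z_j| = t(1+a) = t+s$, which is \eqref{zmNorms}; $|x - z_1| = |z_1 - z_2| = \dots = |z_{m-1} - z_m| = ta = s$, which is \eqref{SequenceDifferencesAgain}; and $|y - z_m| \le ta = s$, which is \eqref{diffYandZm}. I do not expect any real obstacle here: the corollary is simply the scale-invariant reformulation of the lemma, and the only point worth checking is that the condition $a \ge 1$ required by the lemma corresponds precisely to the hypothesis $s \ge t$.
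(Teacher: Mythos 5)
Your argument is correct and is exactly the paper's proof: the corollary is obtained by applying Lemma \ref{BabyGeometryLemma} with $a=s/t\ge 1$ to the rescaled points $x/t,\,y/t$ and then scaling back, just as you spell out.
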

\begin{proof}
We can apply the previous lemma with $a=s/t\ge1$ and $x/t, y/t\in \R^2$.  
\end{proof}
We are of course interested in the scenario where $|x|$ is not necessarily equal to $|y|$. Fortunately, we can just add an additional point to obtain an analogous statement.  We also will need to use the following elementary fact: if $x,y\in \R^2$ with $|y|\ge |x|>0$, then 
\be\label{ElemdistIneq}
\left||x|\frac{y}{|y|}-x\right|\le |x-y|.
\ee

\begin{cor}\label{ChainCor}
Assume $s\ge t>0$ and $x,y\in \R^2$ with 
$$
|y|\ge |x|=t+s.
$$
Suppose
$$
\left||x|\frac{y}{|y|}-x\right|>s.
$$
(i) Then there are $z_1,\dots, z_{m}\in \R^2$ with $m\in \{1,\dots, 8\}$ such that 
$$
|z_1|=\dots=|z_{m}|=t+s,
$$
and
\be\label{SequenceDifferencesOnceAgain}
|x-z_1|, \dots,|z_i-z_{i+1}|,\dots,|z_{m}-y|\le  |y-x|.
\ee
(ii) Furthermore,
\be\label{ChainSubsets}
\begin{cases}
B_{t_0/2}\left(\frac{x+z_1}{2}\right)&\text{with}\quad t_0=|x-z_1|\\
\hspace{.5in} \vdots\\
B_{t_i/2}\left(\frac{z_i+z_{i+1}}{2}\right)&\text{with}\quad t_i=|z_i-z_{i+1}|\\
\hspace{.5in} \vdots\\
B_{t_{m}/2}\left(\frac{z_{m}+y}{2}\right)&\text{with}\quad t_{m}=|z_{m}-y|
\end{cases}
\ee
are all subsets of $\R^2\setminus B_t(0)$.
\end{cor}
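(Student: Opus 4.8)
The plan is to bootstrap from the equal-norm case already settled in Corollary \ref{xandyequalCor} by radially projecting $y$ onto the circle of radius $|x|=t+s$. If $|y|=|x|$ the standing hypothesis $\bigl|\,|x|\tfrac{y}{|y|}-x\bigr|>s$ reads $|y-x|>s$, which is exactly what Corollary \ref{xandyequalCor} requires for the pair $x,y$, so that corollary gives part (i) directly with $m\le 7$; part (ii) in this case is the same estimate recorded below. So assume $|y|>|x|$ and set $\tilde y:=|x|\tfrac{y}{|y|}$, so that $|\tilde y|=|x|=t+s$ and the hypothesis now says exactly $|\tilde y-x|>s$. Applying Corollary \ref{xandyequalCor} to the pair $x,\tilde y$ yields $z_1,\dots,z_k$ with $k\in\{1,\dots,7\}$, all of norm $t+s$, satisfying $|x-z_1|=\dots=|z_{k-1}-z_k|=s$ and $|\tilde y-z_k|\le s$. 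I would then append one more point, $z_{k+1}:=\tilde y$; this makes $m:=k+1\le 8$ and produces the chain $x,z_1,\dots,z_k,\tilde y,y$, whose final link $[\tilde y,y]$ lies along a ray through the origin --- the feature that will make part (ii) immediate there.

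For part (i) the only thing to verify is that each link length is at most $|y-x|$. The links among $x,z_1,\dots,z_k,\tilde y$ have length $s$, except $[z_k,\tilde y]$ which has length $\le s$; by the elementary inequality \eqref{ElemdistIneq} we have $s<|\tilde y-x|\le|x-y|$, dominating all of these. The appended link has length $|\tilde y-y|=|y|-|x|=\bigl||y|-|x|\bigr|\le|x-y|$ by the reverse triangle inequality. Together with $|z_i|=t+s$ for every $i$ (including $z_{k+1}=\tilde y$), this gives \eqref{SequenceDifferencesOnceAgain}.

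For part (ii) I would use the one elementary fact that the ball $B_{\rho/2}\bigl(\tfrac{a+b}{2}\bigr)$ with $\rho=|a-b|$, i.e.\ the ball having $[a,b]$ as a diameter, consists of points $w$ with $|w|\ge\bigl|\tfrac{a+b}{2}\bigr|-\tfrac{\rho}{2}$, so it suffices to bound this quantity below by $t$ for each consecutive pair $(a,b)$ of the chain. When $a$ and $b$ both lie on the circle of radius $t+s$ --- every link but the last --- the parallelogram identity gives $\bigl|\tfrac{a+b}{2}\bigr|^2=(t+s)^2-\rho^2/4$, and since $\rho\le s$ a one-line computation (the inequality to check reduces to $2ts+s^2\ge t\rho+\tfrac{\rho^2}{2}$, true because $\rho\le s$) yields $\bigl|\tfrac{a+b}{2}\bigr|-\tfrac{\rho}{2}\ge t$. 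For the last link, $\tilde y$ and $y$ lie on a common ray from the origin, so its midpoint has norm $\tfrac{|x|+|y|}{2}$ while $\tfrac{\rho}{2}=\tfrac{|y|-|x|}{2}$, whence $\bigl|\tfrac{\tilde y+y}{2}\bigr|-\tfrac{\rho}{2}=|x|=t+s\ge t$. Hence every ball in \eqref{ChainSubsets} is contained in $\R^2\setminus B_t(0)$.

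I do not expect a real obstacle: all the genuine geometry --- distributing equally spaced points on a circle and controlling the relevant angle by $\theta(a)$ --- has already been done in Lemma \ref{BabyGeometryLemma} and Corollary \ref{xandyequalCor}. The only new ingredient is the radial-projection step, which costs exactly one extra point (so the bound $7$ becomes $8$) and, as a bonus, makes the final link a chord of a ray, so the part (ii) estimate there is trivial. The single place needing care is to recognize that the standing hypothesis $|\tilde y-x|>s$ is at once the hypothesis required to invoke Corollary \ref{xandyequalCor} and, via \eqref{ElemdistIneq}, the estimate that dominates all the short links by $|x-y|$.
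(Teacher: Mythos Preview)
Your proof is correct and follows essentially the same route as the paper: apply Corollary \ref{xandyequalCor} to $x$ and the radial projection $\tilde y=|x|\,y/|y|$, append $\tilde y$ as the extra chain point, and then verify the ball inclusions. The only minor difference is in part (ii), where you use the parallelogram identity to bound $\bigl|\tfrac{a+b}{2}\bigr|-\tfrac{\rho}{2}$ from below, while the paper instead observes that $B_{\rho/2}\bigl(\tfrac{a+b}{2}\bigr)\subset B_s(a)\subset\R^2\setminus B_t(0)$ whenever $|a|=t+s$ and $\rho\le s$; both arguments are short and correct.
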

\begin{proof}
$(i)$ Corollary \ref{xandyequalCor} applied to $x$ and $|x|\frac{y}{|y|}$ give at most seven points $z_1,\dots, z_{m-1}$ such 
that \eqref{zmNorms} holds.   We also observe that by \eqref{SequenceDifferencesAgain} and inequality \eqref{ElemdistIneq} 
$$
|x-z_1|=|z_1-z_2|=\dots=|z_{m-2}-z_{m-1}|=s<\left||x|\frac{y}{|y|}-x\right|\le |y-x|.
$$
Set
$$
z_{m}:=|x|\frac{y}{|y|},
$$
and note by conclusion \eqref{diffYandZm} of the previous corollary  that $|z_{m-1}-z_m|\le s\le |y-x|$. 
Moreover, $|z_{m}|=|x|=t+s$.  As
$$
|z_{m}-y|=\left||x|\frac{y}{|y|}-y\right|=|y|-|x|\le |y-x|,
$$
we have verified each inequality in \eqref{SequenceDifferencesOnceAgain}.

\par $(ii)$ Since $|x|=s+t$, $B_s(x)\subset\R^2\setminus B_t(0)$. Moreover, if $z\in B_{s/2}\left(\frac{x+z_1}{2}\right)$ then 
$$
|z-x|\le \left|z- \frac{x+z_1}{2}\right| + \left|\frac{x+z_1}{2} -z_1\right|\le \frac{s}{2}+\frac{|x-z_1|}{2}=s.
$$
Thus, $B_{s/2}\left(\frac{x+z_1}{2}\right)\subset B_s(x)\subset \R^2\setminus B_t(0)$ with $s=|x-z_1|$. The inclusions for $i=1,\dots, m-1$ in \eqref{ChainSubsets} follow similarly.  

\par As for the case $i=m$,  set $t_m:=|z_{m}-x|=|y|-|x|$.   Note that the closest point in $B_{t_m/2}\left(\frac{z_{m}+y}{2}\right)$ to the origin is $z_{m}=|x|\frac{y}{|y|}$. Thus for any $z\in B_{t_m/2}\left(\frac{z_{m}+y}{2}\right)$, $|z|\ge ||x|\frac{y}{|y|}|=|x|>t$. Hence, $B_{t_m/2}\left(\frac{z_{m}+y}{2}\right)\subset \R^2\setminus B_t(0)$ and we conclude \eqref{ChainSubsets}.
\end{proof}

\begin{figure}[h]
\centering
 \includegraphics[width=.8\textwidth]{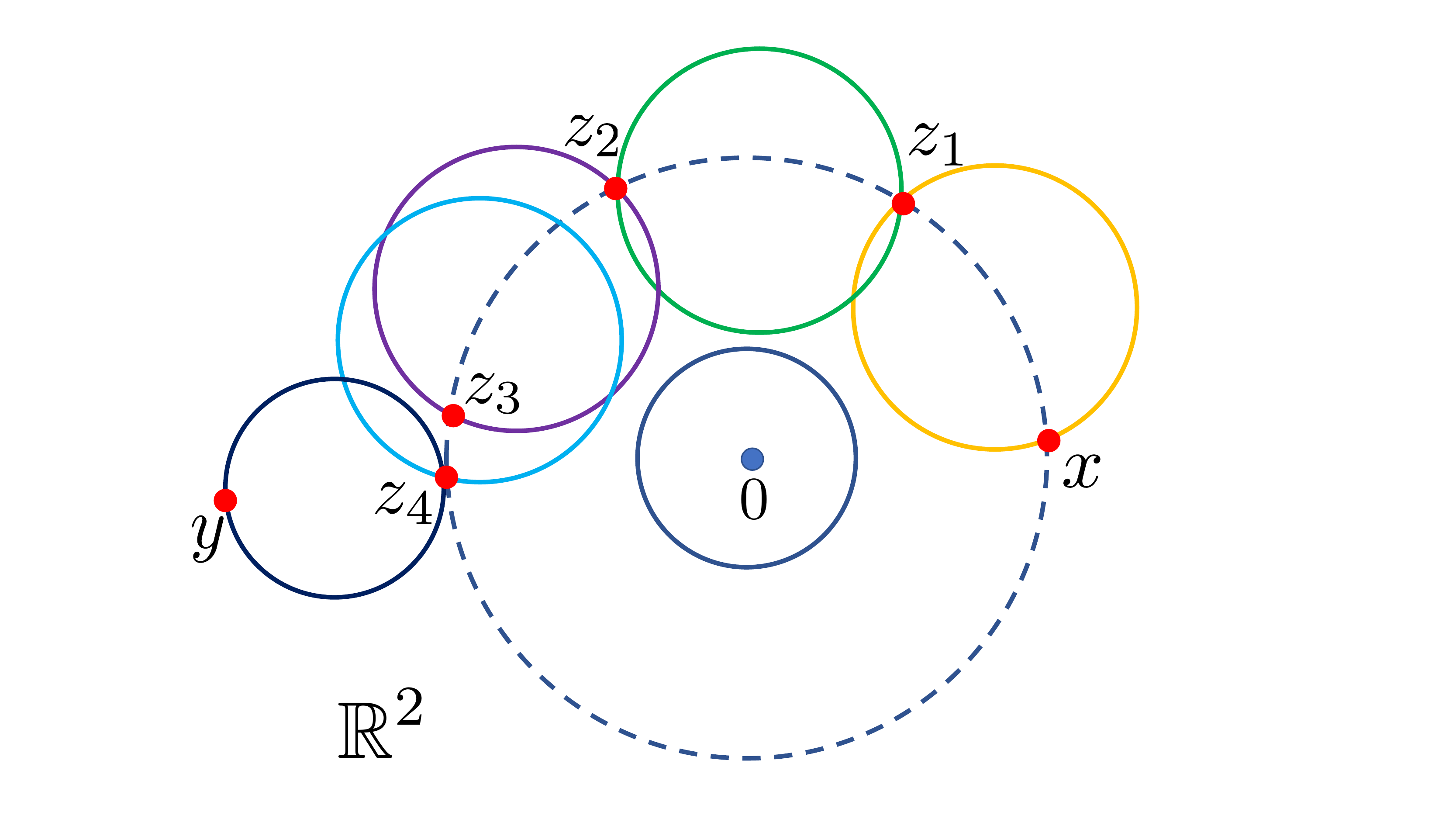}
 \caption{This diagram show how we can adapt our proof of Lemma \ref{BabyGeometryLemma} to the case where $|y|>|x|$. We do so by simply adding another point $z_4=(|x|/|y|)y$ to the chain we obtained by linking $x$ and $(|x|/|y|)y$.}
\end{figure}

\par It turns out that we can easily generalize the ideas we developed for $n=2$ to all $n\ge 3$.  The main insight is that if $x$ and $y$ are linearly independent in $\R^n$, they generate  a two dimensional subspace and we can find a chain of points linking $x$ to $y$ that belong to this subspace. In particular, we can accomplish this task by applying Corollary \ref{ChainCor}.

\begin{cor}\label{ChainProp} Suppose $n\ge 2$.  Assume $s\ge t>0$ and $x,y\in \R^n$ with 
$$
|y|\ge |x|=t+s.
$$
Suppose
\be\label{xandymustbelinearlyindep}
\left||x|\frac{y}{|y|}-x\right|>s.
\ee
(i) Then there are $z_1,\dots, z_{m}\in \R^n$ with $m\in \{1,\dots, 8\}$ such that 
$$
|z_1|=\dots=|z_{m}|=t+s,
$$
and
\be
|x-z_1|, \dots,|z_i-z_{i+1}|,\dots,|z_{m}-y|\le  |y-x|.
\ee
(ii) Furthermore,
\be
\begin{cases}
B_{t_0/2}\left(\frac{x+z_1}{2}\right)&\text{with}\quad t_0=|x-z_1|\\
\hspace{.5in} \vdots\\
B_{t_i/2}\left(\frac{z_i+z_{i+1}}{2}\right)&\text{with}\quad t_i=|z_i-z_{i+1}|\\
\hspace{.5in} \vdots\\
B_{t_{m}/2}\left(\frac{z_{m}+y}{2}\right)&\text{with}\quad t_{m}=|z_{m}-y|
\end{cases}
\ee
are all subsets of $\R^n\setminus B_t(0)$.\end{cor}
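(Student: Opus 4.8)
The plan is to reduce the statement to the two-dimensional case, Corollary~\ref{ChainCor}, by carrying out the whole construction inside a single two-dimensional linear subspace of $\R^n$ containing $x$ and $y$. First note that \eqref{xandymustbelinearlyindep} forces $|x|=t+s>0$, so $x\neq 0$ (and $y\neq 0$, since $y/|y|$ appears). As $x$ and $y$ span a linear subspace of dimension at most $2$ and $n\ge 2$, we may fix a two-dimensional subspace $V\subseteq\R^n$ with $x,y\in V$ together with a linear isometry $\Phi:V\to\R^2$; set $\hat x:=\Phi(x)$ and $\hat y:=\Phi(y)$. Since $\Phi$ is linear and norm-preserving, $|\hat x|=|x|=t+s$, $|\hat y|=|y|\ge t+s$, and
$$
\left|\,|\hat x|\frac{\hat y}{|\hat y|}-\hat x\,\right|=\left|\Phi\!\left(|x|\frac{y}{|y|}-x\right)\right|=\left|\,|x|\frac{y}{|y|}-x\,\right|>s,
$$
so $\hat x,\hat y$ satisfy all the hypotheses of Corollary~\ref{ChainCor}.

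Applying Corollary~\ref{ChainCor} to $\hat x$ and $\hat y$ produces $m\in\{1,\dots,8\}$ and points $\hat z_1,\dots,\hat z_m\in\R^2$ with $|\hat z_i|=t+s$ for each $i$, with the consecutive distances $|\hat x-\hat z_1|,\ |\hat z_i-\hat z_{i+1}|,\ |\hat z_m-\hat y|$ all at most $|\hat y-\hat x|=|y-x|$, and with the balls displayed in Corollary~\ref{ChainCor}(ii) (written for $\hat x,\hat y$) all contained in $\R^2\setminus B_t(0)$. Define $z_i:=\Phi^{-1}(\hat z_i)\in V\subseteq\R^n$. Because $\Phi^{-1}$ is a linear isometry, the equalities $|z_i|=t+s$ and every distance inequality required in part (i) follow at once from the corresponding statements for the $\hat z_i$, establishing part (i).

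For part (ii) the relevant elementary fact is that, in any Euclidean space and for $t>0$, an open ball of radius $\rho$ centered at $c$ is contained in the complement of $B_t(0)$ precisely when $|c|\ge t+\rho$ (immediate from the triangle inequality in both directions). Each center appearing among the balls listed in (ii) — say $\tfrac12(z_i+z_{i+1})$, with associated radius $\tfrac12|z_i-z_{i+1}|$ — is $\Phi^{-1}$ applied to the corresponding center of the two-dimensional chain, and hence has exactly the same norm and the same associated radius as in the $\R^2$ statement; since Corollary~\ref{ChainCor}(ii) says those norms dominate $t$ plus the respective radii, the same inequality holds for the $z_i$, so each $n$-dimensional ball listed in (ii) lies in $\R^n\setminus B_t(0)$. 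The step requiring the most care — bookkeeping rather than a genuine obstacle — is precisely this last transfer: since $\Phi^{-1}$ maps a disk in $\R^2$ to a two-dimensional disk inside $V$, not to a full $n$-ball, one cannot simply push the inclusions forward, so it is essential first to rephrase ``avoids $B_t(0)$'' as a condition on the norm of the center alone, which in turn uses that $\Phi$ is \emph{linear} (hence fixes the origin and commutes with forming midpoints).
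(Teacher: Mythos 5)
Your proposal is correct and follows essentially the same route as the paper: the paper's proof also reduces to Corollary~\ref{ChainCor} by working inside a two-dimensional subspace containing $x$ and $y$, leaving the transfer details ``to the reader,'' which you have supplied carefully (in particular the observation that $B_\rho(c)\subset\R^N\setminus B_t(0)$ is equivalent to $|c|\ge t+\rho$, so the inclusions pass from the planar disks to the full $n$-dimensional balls). If anything, you are slightly more careful than the paper, which asserts that \eqref{xandymustbelinearlyindep} forces linear independence of $x$ and $y$ (false when $y$ is a negative multiple of $x$), whereas your argument only needs some two-dimensional subspace containing both vectors, which always exists for $n\ge 2$.
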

\begin{proof}
The assumption \eqref{xandymustbelinearlyindep} implies that $x$ and $y$ are linearly independent.  In particular, these vectors span the two dimensional subspace 
$$
\{\alpha x+\beta y\in \R^n: \alpha,\beta\in \R\}
$$
of $\R^n$.  We can then apply Corollary \ref{ChainCor} to obtain 
$z_1,\dots, z_{m+1}\in \R^n$ that belong to this subspace and check that these points satisfy the desired conclusions. We leave the details to the reader.  
\end{proof}

\begin{proof}[Proof of the Finite Chain Lemma]
Without loss of generality, we assume $|y|\ge |x|$.  Set $S:=|x|-R$, and note $S\ge R$ with
$$
|y|\ge |x|=R+S.
$$
We will verify the claim by considering the following two cases.

\begin{enumerate}[$(i)$]

\item $\left||x|\frac{y}{|y|}-x\right|\le S$

\item $\left||x|\frac{y}{|y|}-x\right|> S$

\end{enumerate}

\par \underline{Case $(i)$}:  As $|x|=R+S$, $B_S(x)\subset \R^n\setminus B_R(0)$.  And it is straightforward to verify  
$$
B_{r/2}\left(\frac{x+(|x|/|y|)y}{2}\right)\subset B_S(x)
$$
for $r=\left||x|\frac{y}{|y|}-x\right|$ as in the proof of Corollary \ref{ChainCor}.  Likewise we can check that
$$
B_{s/2}\left(\frac{y+(|x|/|y|)y}{2}\right)\subset \R^n\setminus B_R(0)
$$
for $s=\left||x|\frac{y}{|y|}-y\right|$.  We also have $r\le |x-y|$ by inequality \ref{ElemdistIneq} and $s\le |y-x|$ by the triangle inequality.  Therefore, the claim holds for $m=1$ and
$$
z_1=|x|\frac{y}{|y|}.
$$
\par \underline{Case $(ii)$}:  We can apply Corollary \ref{ChainProp} with $s=S$ and $t=R$ to obtain an $m=\{1,\dots,8\}$ and 
a finite sequence $z_1,\dots, z_m\in\R^n\setminus B_{2R}(0)$ which satisfy \eqref{distancesLessXminusY} and \eqref{BallsIncludedINBRcomp}.
\end{proof}

\section{Coordinate gradient descent}\label{AppNum}
In this section, we will change notation and use $(x,y)$ to denote a point in $\R^2$. For a given $\ell>1$, we seek to approximate minimizers of the two dimensional integral 
$$
\int^\ell_{-\ell}\int^{\ell}_{-\ell}|Dv(x,y)|^pdxdy
$$
among functions $v\in W^{1,p}([-\ell,\ell]^2)$ which satisfy  
\be\label{2DConstraint}
v(0,1)= 1\quad\text{and}\quad v(0,-1)=-1. 
\ee
It can be shown that a unique minimizer $u_\ell\in W^{1,p}([-\ell,\ell]^2)$ exists and  that $u_\ell$ is $p$-harmonic and thus continuously differentiable in $(-\ell,\ell)^2\setminus\{(0,\pm1)\}$. Moreover, as $\ell\rightarrow\infty$, $u_\ell$ converges locally uniformly to an extremal of Morrey's inequality in $\R^2$ which satisfies \eqref{2DConstraint}. Therefore, our numerical approximation for $u_\ell$ will in turn serve as an approximation for the corresponding extremal. 

\par To this end, we suppose that $\ell\in \N$ and divide the interval $[-\ell,\ell]$ into $N-1$ evenly spaced sub-intervals of length 
$$
h=\frac{2\ell}{N-1}.
$$
Along the $x$-axis, we will label the endpoints of these intervals with
$$
x_i=-\ell+(i-1)h
$$
for $i=1,\dots, N$ and along the $y$-axis we will use the labels
$$
y_j=-\ell+(j-1)h
$$
for $j=1,\dots, N$.  

\par Assuming that $v:[-\ell,\ell]^2\rightarrow \R$ is continuously differentiable 
\begin{align*}
\int^\ell_{-\ell}\int^{\ell}_{-\ell}|Dv(x,y)|^pdxdy&\approx\sum^{N-1}_{i,j=1}|Dv(x_i,y_j)|^ph^2\\
&=\sum^{N-1}_{i,j=1}\left(v_{x}(x_{i},y_{j})^2+v_{y}(x_{i},y_{j})^2\right)^{p/2}h^2\\
&\approx\sum^{N-1}_{i,j=1}\left( \left(\frac{v(x_{i}+h,y_{j})-v(x_{i},y_{j})}{h}\right)^2+\left(\frac{v(x_i,y_j+h)-v(x_i,y_i)}{h}\right)^2\right)^{p/2}h^2\\
&=\sum^{N-1}_{i,j=1}\left( \left(\frac{v(x_{i+1},y_j)-v(x_i,y_i)}{h}\right)^2+\left(\frac{v(x_i,y_{j+1})-v(x_i,y_i)}{h}\right)^2\right)^{p/2}h^2\\
&= h^{2-p}\sum^{N-1}_{i,j=1}\left( \left(v(x_{i+1},y_j)-v(x_i,y_i)\right)^2+\left(v(x_i,y_{j+1})-v(x_i,y_i\right)^2\right)^{p/2}\\
&=h^{2-p}\sum^{N-1}_{i,j=1}\left( \left(v_{i+1,j}-v_{i,j}\right)^2+\left(v_{i,j+1}-v_{i,j}\right)^2\right)^{p/2}.
\end{align*} 
Here we have written
$$
v_{i,j}=v(x_i,y_j).
$$

\par We now suppose that $N$ is of the form
$$
N=2\ell k+1
$$
for some $k\in \N$; this assumption is equivalent to $h=1/k$. We can then attempt to minimize 
\be\label{DiscreteEnergy}
E(v):=\sum^{N-1}_{i,j=1}\left( \left(v_{i+1,j}-v_{i,j}\right)^2+\left(v_{i,j+1}-v_{i,j}\right)^2\right)^{p/2}
\ee
among the $N^2-1$ variables 
$$
v=
\left(\begin{array}{ccccc}
v_{1,1} & v_{1,2}&\dots& v_{1,N-1}& v_{1,N}\\
v_{2,1} & v_{2,2}&\dots& v_{2,N-1}& v_{2,N}\\
\vdots & \vdots &  \ddots & \vdots & \vdots \\
v_{N-1,1} & v_{N-1,2} & \dots & v_{N-1,N-1} &v_{N-1,N} \\
v_{N,1} & v_{N,2} & \dots & v_{N,N-1} &
\end{array}\right),
$$
which satisfy 
\be\label{DiscreteConstraints}
v_{\ell k+1, (\ell+1)k+1}=1\quad \text{and}\quad  v_{\ell k+1, (\ell-1)k+1}=-1.
\ee
These constraints are natural as $x_{\ell k+1}=0, y_{(\ell +1)k+1}=1$ and $y_{(\ell -1)k+1}=-1$. 

\par We are now in position to use coordinate gradient descent to minimize $E$. That is, we choose an initial guess $v^0=(v^0_{i,j})$ which satisfies \eqref{DiscreteConstraints}, select a small
parameter $\tau>0$, and then run the iteration scheme 
$$
\begin{cases}
\displaystyle v^{m}_{i,j}=v^{m-1}_{i,j}-\tau \frac{\partial E(v^{m-1})}{\partial v_{i,j}},\quad &(i,j)\neq (\ell k+1, (\ell\pm1)k+1)\\\\
\displaystyle v^{m}_{i,j}=v^{m-1}_{i,j},\quad &(i,j)=(\ell k+1, (\ell\pm1)k+1)
\end{cases}
$$
for $m\in \N$. After we perform this scheme for large number of iterates $m=1,\dots, M$, for $\tau$ small and $k$ sufficiently large, we can use $v^M_{i,j}$ as an approximation for $u_\ell(x_i,y_j)$. This is what we did to produce the graphs in Figures \ref{ExtremalFig} and \ref{ExtremalFig2} and the contour plot in Figure \ref{ContourExt}.  
\begin{figure}[h]
\centering
 \includegraphics[width=1\columnwidth]{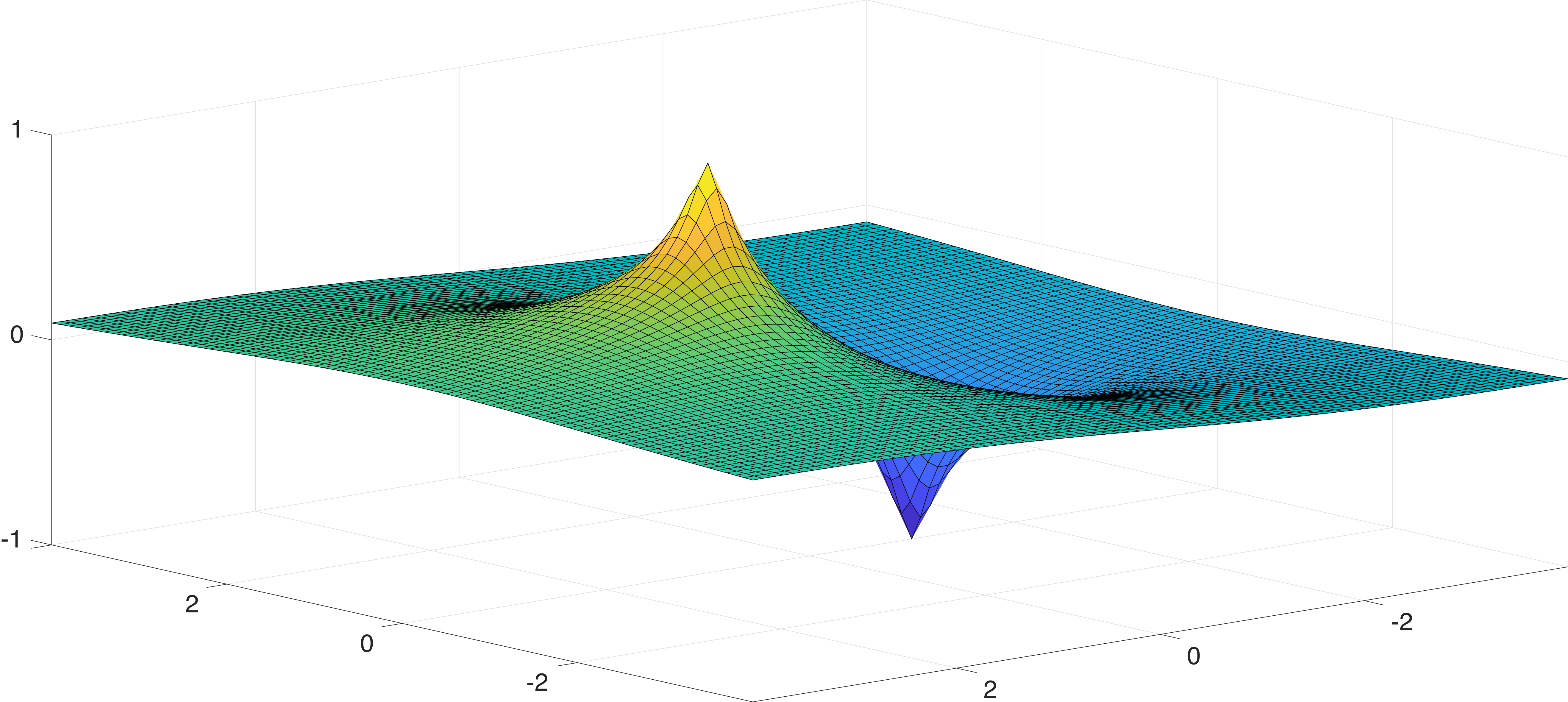}
 \caption{A numerically computed approximation for an extremal of Morrey's inequality with $n=2$ and $p=4$. Here $\ell=6$ and $k=10$ (so that $N=121$), $\tau=10^{-10}$, and this  approximation was obtained after $10^8$  iterations. Our initial guess was $v^0_{i,j}=w(x_i,y_j)$,
 where $w(x,y)=c\ln[(x^2+(y-1)^2+10^{-2})/(x^2+(y+1)^2+10^{-2})]$ and $c$ is chosen to ensure $w(0,1)=1$ and $w(0,-1)=-1$.}
 \label{ExtremalFig2}
\end{figure}

\bibliography{MorreyBib}{}

\begin{thebibliography}{10}

\bibitem{AGS}
L.~Ambrosio, N.~Gigli, and G.~Savar\'e.
\newblock {\em Gradient flows in metric spaces and in the space of probability
  measures}.
\newblock Lectures in Mathematics ETH Z\"urich. Birkh\"auser Verlag, Basel,
  second edition, 2008.

\bibitem{MR0431287}
Thierry Aubin.
\newblock \'{E}quations diff\'{e}rentielles non lin\'{e}aires et probl\`eme de
  {Y}amabe concernant la courbure scalaire.
\newblock {\em J. Math. Pures Appl. (9)}, 55(3):269--296, 1976.

\bibitem{MR0448404}
Thierry Aubin.
\newblock Probl\`{e}mes isop\'{e}rim\'{e}triques et espaces de {S}obolev.
\newblock {\em J. Differential Geometry}, 11(4):573--598, 1976.

\bibitem{MR1124290}
Gabriele Bianchi and Henrik Egnell.
\newblock A note on the {S}obolev inequality.
\newblock {\em J. Funct. Anal.}, 100(1):18--24, 1991.

\bibitem{MR2178968}
Miroslav Chleb\'{\i}k, Andrea Cianchi, and Nicola Fusco.
\newblock The perimeter inequality under {S}teiner symmetrization: cases of
  equality.
\newblock {\em Ann. of Math. (2)}, 162(1):525--555, 2005.

\bibitem{MR2538501}
A.~Cianchi, N.~Fusco, F.~Maggi, and A.~Pratelli.
\newblock The sharp {S}obolev inequality in quantitative form.
\newblock {\em J. Eur. Math. Soc. (JEMS)}, 11(5):1105--1139, 2009.

\bibitem{MR2395175}
Andrea Cianchi.
\newblock Sharp {M}orrey-{S}obolev inequalities and the distance from
  extremals.
\newblock {\em Trans. Amer. Math. Soc.}, 360(8):4335--4347, 2008.

\bibitem{MR2551138}
Andrea Cianchi, Erwin Lutwak, Deane Yang, and Gaoyong Zhang.
\newblock Affine {M}oser-{T}rudinger and {M}orrey-{S}obolev inequalities.
\newblock {\em Calc. Var. Partial Differential Equations}, 36(3):419--436,
  2009.

\bibitem{Evans}
Lawrence~C. Evans.
\newblock A new proof of local {$C^{1,\alpha }$}\ regularity for solutions of
  certain degenerate elliptic p.d.e.
\newblock {\em J. Differential Equations}, 45(3):356--373, 1982.

\bibitem{MR2597943}
Lawrence~C. Evans.
\newblock {\em Partial differential equations}, volume~19 of {\em Graduate
  Studies in Mathematics}.
\newblock American Mathematical Society, Providence, RI, second edition, 2010.

\bibitem{MR3409135}
Lawrence~C. Evans and Ronald~F. Gariepy.
\newblock {\em Measure theory and fine properties of functions}.
\newblock Textbooks in Mathematics. CRC Press, Boca Raton, FL, revised edition,
  2015.

\bibitem{MR3896203}
Alessio Figalli and Robin Neumayer.
\newblock Gradient stability for the {S}obolev inequality: the case {$p\geq
  2$}.
\newblock {\em J. Eur. Math. Soc. (JEMS)}, 21(2):319--354, 2019.

\bibitem{MR2456887}
N.~Fusco, F.~Maggi, and A.~Pratelli.
\newblock The sharp quantitative isoperimetric inequality.
\newblock {\em Ann. of Math. (2)}, 168(3):941--980, 2008.

\bibitem{MR3404715}
Nicola Fusco.
\newblock The quantitative isoperimetric inequality and related topics.
\newblock {\em Bull. Math. Sci.}, 5(3):517--607, 2015.

\bibitem{MR1814364}
David Gilbarg and Neil~S. Trudinger.
\newblock {\em Elliptic partial differential equations of second order}.
\newblock Classics in Mathematics. Springer-Verlag, Berlin, 2001.
\newblock Reprint of the 1998 edition.

\bibitem{MR1545250}
Eberhard Hopf.
\newblock \"{U}ber den funktionalen, insbesondere den analytischen {C}harakter
  der {L}\"{o}sungen elliptischer {D}ifferentialgleichungen zweiter {O}rdnung.
\newblock {\em Math. Z.}, 34(1):194--233, 1932.

\bibitem{KicVer}
Satyanad Kichenassamy and Laurent V\'eron.
\newblock Singular solutions of the {$p$}-{L}aplace equation.
\newblock {\em Math. Ann.}, 275(4):599--615, 1986.

\bibitem{MR886428}
Satyanad Kichenassamy and Laurent V\'{e}ron.
\newblock Erratum: ``{S}ingular solutions of the {$p$}-{L}aplace equation''.
\newblock {\em Math. Ann.}, 277(2):352, 1987.

\bibitem{MR888880}
John~M. Lee and Thomas~H. Parker.
\newblock The {Y}amabe problem.
\newblock {\em Bull. Amer. Math. Soc. (N.S.)}, 17(1):37--91, 1987.

\bibitem{MR0477094}
John~L. Lewis.
\newblock Capacitary functions in convex rings.
\newblock {\em Arch. Rational Mech. Anal.}, 66(3):201--224, 1977.

\bibitem{Lew}
John~L. Lewis.
\newblock Regularity of the derivatives of solutions to certain degenerate
  elliptic equations.
\newblock {\em Indiana Univ. Math. J.}, 32(6):849--858, 1983.

\bibitem{MR2242021}
Peter Lindqvist.
\newblock {\em Notes on the {$p$}-{L}aplace equation}, volume 102 of {\em
  Report. University of Jyv\"askyl\"a Department of Mathematics and
  Statistics}.
\newblock University of Jyv\"askyl\"a, Jyv\"askyl\"a, 2006.

\bibitem{MR2402947}
F.~Maggi.
\newblock Some methods for studying stability in isoperimetric type problems.
\newblock {\em Bull. Amer. Math. Soc. (N.S.)}, 45(3):367--408, 2008.

\bibitem{MR1501936}
Charles~B. Morrey, Jr.
\newblock On the solutions of quasi-linear elliptic partial differential
  equations.
\newblock {\em Trans. Amer. Math. Soc.}, 43(1):126--166, 1938.

\bibitem{MR2492985}
Charles~B. Morrey, Jr.
\newblock {\em Multiple integrals in the calculus of variations}.
\newblock Classics in Mathematics. Springer-Verlag, Berlin, 2008.
\newblock Reprint of the 1966 edition [MR0202511].

\bibitem{MR0109940}
L.~Nirenberg.
\newblock On elliptic partial differential equations.
\newblock {\em Ann. Scuola Norm. Sup. Pisa (3)}, 13:115--162, 1959.

\bibitem{MR951227}
Shigeru Sakaguchi.
\newblock Concavity properties of solutions to some degenerate quasilinear
  elliptic {D}irichlet problems.
\newblock {\em Ann. Scuola Norm. Sup. Pisa Cl. Sci. (4)}, 14(3):403--421
  (1988), 1987.

\bibitem{MR0463908}
Giorgio Talenti.
\newblock Best constant in {S}obolev inequality.
\newblock {\em Ann. Mat. Pura Appl. (4)}, 110:353--372, 1976.

\bibitem{MR1322313}
Giorgio Talenti.
\newblock Inequalities in rearrangement invariant function spaces.
\newblock In {\em Nonlinear analysis, function spaces and applications, {V}ol.
  5 ({P}rague, 1994)}, pages 177--230. Prometheus, Prague, 1994.

\bibitem{Ural}
N.~N. Ural'ceva.
\newblock Degenerate quasilinear elliptic systems.
\newblock {\em Zap. Nau\v cn. Sem. Leningrad. Otdel. Mat. Inst. Steklov.
  (LOMI)}, 7:184--222, 1968.

\end{thebibliography}

\bibliographystyle{plain}

\typeout{get arXiv to do 4 passes: Label(s) may have changed. Rerun}

\end{document}